\newcommand{\showcomments}{yes}
\newsavebox{\commentbox}
\newcounter{ax}
\newtheorem{thm}{Theorem}[section]
\newtheorem{lem}[thm]{Lemma}
\newtheorem{cor}[thm]{Corollary}
\newtheorem{prop}[thm]{Proposition}
\newtheorem{thmi}{Theorem}
\newtheorem{cori}[thmi]{Corollary}
\theoremstyle{definition}
\newtheorem{defn}[thm]{Definition}
\newtheorem{rem}[thm]{Remark}
\newtheorem{question}[thm]{Question}
\newtheorem{claim}{Claim}
\newtheorem{claim*}{Claim}
\DeclareMathOperator{\dimension}{dim}
\DeclareMathOperator{\Aut}{Aut}
\DeclareMathOperator{\Out}{Out}
\DeclareMathOperator{\stabilizer}{Stab}
\newcommand{\neb}{\mathcal N}
\newcommand{\field}[1]{\mathbb{#1}}
\newcommand{\integers}{\ensuremath{\field{Z}}}
\newcommand{\naturals}{\ensuremath{\field{N}}}
\newcommand{\interior} [1] {{\ensuremath \text{\rm Int}(#1) }}
\newcommand{\Rmnum}[1]{\mathbf{{\expandafter\@slowromancap\romannumeral #1@}}}
\let\oldmarginpar\marginpar
\renewcommand\marginpar[1]{\-\oldmarginpar[\raggedleft\footnotesize #1]%
{\raggedright\footnotesize #1}}
\newcounter{enumitemp}
\newcommand{\dist}{\textup{\textsf{d}}}
\newcommand{\OR}{\overrightarrow}
\newcommand{\half}{{\frac 1 2}}
\newcommand{\interval}[3]{\left#1 #2 \right#3}
\newcommand{\onto}{\twoheadrightarrow}
\newcommand{\defRet}{\stackrel{\supset}{\onto}}
\newcommand{\bur}[1]{\mathcal{B}\left(#1\right)}
\newcommand{\faces}{\mathrm{Faces}}
\begin{document}
\title[Panel collapse]{Panel collapse and its applications}
\date{\today}
\subjclass[2010]{Primary: 20F65; Secondary: 20E08}
\keywords{ends of groups, CAT(0) cube complex, Stallings's ends 
theorem, line pattern}

\author[M.F.~Hagen]{Mark F. Hagen}
\address{School of Mathematics, University of Bristol, Bristol, UK}
\email{markfhagen@gmail.com}

\author[N.W.M~Touikan]{Nicholas W.M. Touikan}
\address{Department of Mathematics and Statistics, University of New Brunswick, Fredericton, New Brunswick, Canada}
\email{ntouikan@unb.ca}

\maketitle

\begin{abstract}
We describe a procedure called \emph{panel collapse} for replacing a CAT(0) cube 
complex $\Psi$ by a ``lower complexity'' CAT(0) cube complex $\Psi_\bullet$ 
whenever $\Psi$ contains a codimension--$2$ hyperplane that is \emph{extremal} 
in one of the codimension--$1$ hyperplanes containing it.  Although 
$\Psi_\bullet$ is not in general a subcomplex of $\Psi$, it is a subspace 
consisting of a subcomplex together with some cubes that sit inside $\Psi$ 
``diagonally''.  The hyperplanes of $\Psi_\bullet$ extend to hyperplanes of 
$\Psi$.  Applying this procedure, we prove: if a group $G$ acts cocompactly on 
a CAT(0) cube complex $\Psi$, then there is a CAT(0) cube complex $\Omega$ so 
that $G$ acts cocompactly on $\Omega$ and for each hyperplane 
$H$ of $\Omega$, the stabiliser in $G$ of $H$ acts on $H$ essentially.  

Using panel collapse, we obtain a new proof of Stallings's theorem on
groups with more than one end.  As another illustrative example, we
show that panel collapse applies to the exotic cubulations of free
groups constructed by Wise in~\cite{Wise:recube}.  Next, we show that
the CAT(0) cube complexes constructed by Cashen-Macura
in~\cite{cashen-macura} can be collapsed to trees while preserving all
of the necessary group actions.  (It also illustrates that
our result applies to actions of some non-discrete groups.)  We also discuss possible applications to quasi-isometric rigidity for certain classes of graphs of free groups with cyclic edge groups. Panel collapse is also used in forthcoming work of the
first-named author and Wilton to study fixed-point sets of finite
subgroups of $\Out(F_n)$ on the free splitting complex.  Finally, we apply panel collapse to a conjecture of Kropholler, obtaining a short proof under a natural extra hypothesis.
\end{abstract}

\setcounter{tocdepth}{1}
\tableofcontents

\section*{Introduction}\label{sec:intro}
CAT(0) cube complexes, which generalise simplicial trees in several ways, have 
wide utility in geometric group theory; making a group act by isometries on a 
CAT(0) cube complex can reveal considerable information about the 
structure of the group.  The nature of this information depends on where the 
action lies along a ``niceness spectrum'', with (merely) \emph{fixed-point 
freely} at one end, and \emph{properly and cocompactly} at the other.

In this paper, we focus on cocompact (but not necessarily proper) actions on 
(finite-dimensional but not necessarily locally finite) CAT(0) cube complexes.  
Examples include actions on Bass-Serre trees associated to finite 
graphs of groups, but this class also encompasses the large array of groups 
known to act on higher-dimensional CAT(0) cube complexes satisfying these 
conditions,  
e.g.~\cite{NibloReeves,CharneyDavis,Wise:small_can,OllivierWise,HagenPrzytycki,
HagenWise,LauerWise,Martin:Higman,WiseTubular,BergeronWise,Sageev97}.  

This paper highlights a new property of actions on CAT(0) cube complexes: if 
$G$ acts on the CAT(0) cube complex $\Psi$, we say that $G$ acts 
\emph{hyperplane-essentially} if for each hyperplane $H$ of $\Psi$, the 
stabiliser $\stabilizer_G(H)$ of $H$ acts essentially on the CAT(0) cube 
complex $H$.  (Recall that $G$ acts essentially on $\Psi$ if, for each halfspace 
in $\Psi$, any $G$--orbit contains points of that halfspace arbitrarily far 
from its bounding hyperplane.)  Work of Caprace-Sageev shows that, under 
reasonable conditions on the $G$--action, one can always pass to a convex 
subcomplex of $\Psi$ on which $G$ acts essentially, but simple examples (where 
$G=\integers$) show that passing to a convex subcomplex may never yield a 
hyperplane-essential action.  

There are numerous reasons to be interested in hyperplane-essentiality, which 
is a weak version of ``no free faces''.  For example, hyperplane-essentiality 
enables one to apply results guaranteeing that intersections of halfspaces in 
CAT(0) cube complexes contain hyperplanes (again, under mild conditions on the 
complex), strengthening the very useful~\cite[Lemma 5.2]{CapraceSageev}.  
Access to these lemmas has various useful consequences; for instance, 
hyperplane-essentiality is used in the forthcoming~\cite{HagenWilton} to 
generalise Guirardel's \emph{core} of a pair of splittings of a group $G$ to a 
core of multiple cubulations of $G$.  

Other consequences of hyperplane-essentiality arise from one of the typical 
ways in which it can fail.  First, note that if $\Psi$ is compact and $G$ acts 
on $\Psi$, then the action is essential if and only if $\Psi$ has no 
halfspaces, i.e. $\Psi$ is a single point.  Hence, if $\Psi$ is a CAT(0) cube 
complex with compact hyperplanes, then a $G$--action on $\Psi$ is 
hyperplane-essential if and only if $\Psi$ is a tree.

More generally, suppose that $G$ acts cocompactly on $\Psi$, but there is some 
hyperplane $H$ so that the action of $\stabilizer_G(H)$ on $H$ is not 
essential.  Then, since $\stabilizer_G(H)$ must act cocompactly on $H$, there 
exists a hyperplane $E$ that crosses $H$ and is \emph{extremal} in $H$, in the 
sense that, for some halfspace $E^+$ associated to $E$, the halfspace $E^+\cap 
H$ of $H$ lies entirely in the cubical neighbourhood of $E\cap H$.  This leaves 
the $G$--action on $\Psi$ open to the main technique introduced in this paper, 
\emph{panel collapse}, which is inspired by an idea in~\cite{touikan2015one}, 
in which certain square complexes are equivariantly collapsed to simpler ones.  
This procedure enables a $G$--equivariant deformation 
retraction from $\Psi$ to a lower--\emph{complexity} CAT(0) cube complex 
$\Psi_\bullet$.  Although $\Psi_\bullet$ need not be a subcomplex of $\Psi$, it 
is a $G$--invariant subspace with a natural cubical structure inherited from 
$\Psi$, whose hyperplanes extend to those of $\Psi$.  Specifically:

\begin{thmi}[Panel collapse, 
Corollary~\ref{cor:lower_complexity}]\label{thmi:panel_collapse}
Let $G$ act cocompactly and without inversions in hyperplanes on the CAT(0) 
cube complex $\Psi$.  Suppose that for some hyperplane $H$, the stabiliser of 
$H$ fails to act essentially on $H$ (i.e. $H$ contains a 
$\stabilizer_G(H)$--shallow halfspace; equivalently, a shallow halfspace). Then 
there is a CAT(0) cube 
complex 
$\Psi_\bullet$ such that:
\begin{enumerate}
 \item $\Psi_\bullet\subset\Psi$, and each hyperplane of $\Psi_\bullet$ is a 
component of a subspace of the form $K\cap\Psi_\bullet$, where $K$ is a 
hyperplane of $\Psi$. \label{it:cat0}
 
 \item $\Psi_\bullet$ is $G$--invariant and the action of $G$ on $\Psi_\bullet$ 
is cocompact.
 
 \item The action of $G$ on $\Psi_\bullet$ is without inversions in hyperplanes.
 
 \item $\Psi_\bullet$ has strictly lower \emph{complexity} than $\Psi$.
 
 \item Each $g\in G$ is hyperbolic on $\Psi$ if and only if it is hyperbolic on 
$\Psi_\bullet$.

 \item If $\Psi$ is locally finite, then so is $\Psi_\bullet$.
\end{enumerate}
\end{thmi}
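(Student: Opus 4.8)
The plan is to obtain $\Psi_\bullet$ from a single \emph{panel collapse}, performed $G$--equivariantly, and then to verify the six conclusions in turn; conclusion~\eqref{it:cat0} --- that $\Psi_\bullet$ is again a CAT(0) cube complex with hyperplanes cut out by hyperplanes of $\Psi$ --- is the substantive point, and the rest is comparatively soft. One first isolates the construction as a self--contained statement (given a codimension--$2$ hyperplane extremal in a codimension--$1$ hyperplane containing it, produce $\Psi_\bullet$ with (1)--(6)) and then deduces the corollary by feeding in the panel located below. \textbf{Step 1 (finding a panel).} Since $G$ acts cocompactly on $\Psi$, $\stabilizer_G(H)$ acts cocompactly on $H$; as recalled in the introduction, this together with the failure of essentiality of $\stabilizer_G(H)$ on $H$ gives a hyperplane $E$ of $\Psi$ crossing $H$ such that $E\cap H$ is \emph{extremal} in $H$: for some halfspace $E^+$ of $\Psi$, the halfspace $E^+\cap H$ of $H$ lies in the carrier $N_H(E\cap H)$ of $E\cap H$ in $H$. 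Put $\mathcal K=E\cap H$, a codimension--$2$ hyperplane of $\Psi$, and let the \emph{panel} $\mathbf P$ be (the relevant part of) the carrier of $\mathcal K$ in $\Psi$. The set of such configurations $(E,H,E^+)$ is $G$--invariant once $H$ ranges over its orbit, so fix one $G$--orbit $\mathcal O$ of panels; one checks (shrinking $\mathcal O$ $G$--equivariantly if necessary) that the panels of $\mathcal O$ meet only along faces disjoint from their interiors, so that the local moves below do not clash.

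\textbf{Step 2 (the collapse).} The carrier of $\mathcal K$ is canonically $\mathcal K\times Q$, where $Q$ is a square with one pair of opposite edges dual to $E$ and the other pair dual to $H$. The extremality hypothesis is designed so that $\mathbf P$ meets the rest of $\Psi$ only along a part of its frontier that, together with the cubical structure, deformation retracts onto a ``diagonal'' subcomplex; concretely, the move replaces each cube $\mathcal K'\times Q$ of $\mathbf P$ (with $\mathcal K'$ a face of $\mathcal K$) by $\mathcal K'\times D$ for a suitable diagonal $D$ of $Q$, realized by a deformation retraction of $Q$ onto $D$ together with the edges of $Q$ that meet cubes outside $\mathbf P$. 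Fitting these local retractions together over all panels of $\mathcal O$ and extending by the identity yields a $G$--equivariant deformation retraction $\rho\colon\Psi\to\Psi_\bullet$, where $\Psi_\bullet\subset\Psi$ is the union of the cubes of $\Psi$ missing the open panels and the diagonally embedded cells $\mathcal K'\times D$. Rescaled, each $\mathcal K'\times D$ is a Euclidean cube of dimension $\dim\mathcal K'+1$, one less than that of $\mathcal K'\times Q$; together with the surviving cubes they equip $\Psi_\bullet$ with a cube--complex structure, the ``diagonal cubes'' of the introduction. This is the whole construction; everything below is verification.

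\textbf{Step 3 (the CAT(0) property; the main obstacle).} This is the genuinely hard step. Simple connectivity of $\Psi_\bullet$ is free: it is a deformation retract of the contractible $\Psi$. For non--positive curvature I would verify Gromov's link condition directly --- at each vertex of $\Psi_\bullet$ (all vertices of $\Psi_\bullet$ are vertices of $\Psi$) the link in the new cube structure must be a flag simplicial complex. Links at vertices disjoint from $\mathbf P$ are unchanged, so the work is concentrated at vertices of $\mathbf P$, where deleting the panel cubes and inserting the cells $\mathcal K'\times D$ alters the link; one must check that no empty simplex is created. It is exactly here --- and, I expect, only here --- that the extremality of $\mathcal K$ in $H$ is indispensable: it forces the part of each such link that is removed to be a ``corner'' that the inserted diagonal fills in flagly. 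Given non--positive curvature and simple connectivity, $\Psi_\bullet$ is CAT(0); inspecting the midcubes of the diagonal cells and of the surviving cubes then identifies each hyperplane of $\Psi_\bullet$ as a component of $K\cap\Psi_\bullet$ for a hyperplane $K$ of $\Psi$, which is conclusion~\eqref{it:cat0}.

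\textbf{Step 4 (the remaining conclusions).} (2): the construction is $G$--equivariant and $\rho$ descends to a continuous surjection $\Psi/G\to\Psi_\bullet/G$, so $\Psi_\bullet/G$ is compact. (3): each hyperplane of $\Psi_\bullet$ lies in a hyperplane of $\Psi$, on whose sides $G$ acts without inversions, so ``old'' hyperplanes inherit inversion--freeness; a ``diagonal'' edge cannot be inverted either, since inverting it would interchange two opposite corners of $Q$ and hence invert both $E$ and $H$ in $\Psi$, which is forbidden. (4): the panel cubes are traded for strictly lower--dimensional diagonal cubes, so the complexity --- which the paper defines precisely so that this trade lowers it --- strictly decreases. (5): by cocompactness $\rho$ moves points a uniformly bounded distance and is uniformly Lipschitz on cubes, so the inclusion $\Psi_\bullet\hookrightarrow\Psi$ is a $G$--equivariant quasi--isometry; hence each $g\in G$ has bounded orbits on $\Psi$ iff on $\Psi_\bullet$, and on a finite--dimensional CAT(0) cube complex an isometry has bounded orbits iff it is elliptic, so $g$ is hyperbolic on $\Psi$ iff on $\Psi_\bullet$. (6): if $\Psi$ is locally finite then a neighbourhood of any point of $\Psi_\bullet$ meets only finitely many cubes of $\Psi$, hence only finitely many cubes of $\Psi_\bullet$, whether old or diagonal.
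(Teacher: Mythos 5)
Your Step 1 and Step 4 are essentially right, and your Step 3 correctly identifies the link condition as the crux and correctly intuits where extremality enters. But Step 2 contains a gap that is, in fact, the central difficulty the paper is built to solve, and your framing mischaracterises the basic move.

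First, the local move you describe is wrong in the simplest case. When a maximal cube $c=\mathcal K'\times Q$ meets exactly one panel of the orbit $\mathcal O$, the correct operation is the ordinary collapse of a free face: delete the open panel face and the open cube $c$, and keep the remaining three faces of $Q$ (times $\mathcal K'$). This produces a \emph{subcomplex}; no diagonal is required, and indeed inserting $\mathcal K'\times D$ for a diagonal $D$ of $Q$ would fail to glue to the neighbouring cubes of $\Psi$, which attach along $\mathcal K'\times\partial Q$ and not along the diagonal. In the paper's language this is the passage from $c$ to $\mathcal D(c)$, which is a subcomplex of $c$; diagonals never arise here.

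Second, and more seriously, you write that ``one checks (shrinking $\mathcal O$ $G$--equivariantly if necessary) that the panels of $\mathcal O$ meet only along faces disjoint from their interiors, so that the local moves below do not clash.'' This cannot be arranged. $\mathcal O$ is a single $G$--orbit of panels, and if $gP$ meets $P$ badly you cannot discard $gP$ without discarding $P$. The best you can guarantee (via the without-inversions hypothesis) is the \emph{no facing panels property}, which only rules out two distinct panels occupying \emph{opposite} codimension--$1$ faces of a common maximal cube. It does not prevent two or more panels from meeting a single cube in faces that share a corner, edge, or higher-dimensional subcube. When that happens the naive deletion $\mathcal D(c)$ is \emph{disconnected}, and that is precisely when the diagonal cells $\mathcal S(w)$ must be inserted to reconnect the persistent subcube $h(c)$ to the salient subcube $\bar h(c)$. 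The paper flags this explicitly in the introduction (``hyperplanes can intersect their $G$--translates\ldots so naive approaches involving collapsing free faces cannot work''), and the bulk of Section~3 --- the definition of the fundament $\mathcal F(c)$ via $\mathcal F_0,\mathcal F_1,\mathcal F$, the analysis of persistent corners and salient subcubes, Lemmas~\ref{lem:compat},~\ref{lem:extra_cubes},~\ref{lem:external_crush}, and the CAT(0)-verification Lemmas~\ref{lem:cube_complex_link_condition},~\ref{lem:cube_intersection} --- exists exactly to carry out the simultaneous collapse of conflicting panels coherently across intersecting cubes. Your proposal silently assumes this work away and therefore does not establish conclusion~\eqref{it:cat0}: without the fundament machinery you have no well-defined cube structure on $\Psi_\bullet$ when panels conflict, hence no link to check, and hence no CAT(0) conclusion. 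The remaining conclusions, which you argue correctly at the level you treat them, all quote this unproven structure.
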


The complexity of $\Psi$ is just the number of $G$--orbits of cubes of each 
dimension $>1$, taken in lexicographic order.  So, when the complexity 
vanishes, 
$\Psi$ is a tree.  A halfspace in a CAT(0) cube complex $Y$ is \emph{shallow} 
if it is contained in some finite neighbourhood of its bounding hyperplane, 
and, given a $G$--action on $Y$, a halfspace is \emph{$G$--shallow} if some 
$G$--orbit intersects the halfspace in a subset contained in a finite 
neighbourhood of the bounding hyperplane.

From Theorem~\ref{thmi:panel_collapse}, induction on complexity then shows 
that, if $G$ acts cocompactly on a CAT(0) cube complex $\Psi$, then \textbf{$G$ 
acts cocompactly and hyperplane-essentially on some other CAT(0) cube complex 
$\Omega\subset\Psi$}.  Moreover, one can then
pass to the Caprace-Sageev \emph{essential core}, as in~\cite[\S 3]{CapraceSageev}, to obtain a cocompact, essential, hyperplane-essential 
action of $G$ on a CAT(0) cube complex $\Omega$.  

In particular, if the hyperplanes of $\Psi$ 
were all compact, then the hyperplanes of $\Omega$ are all single points, i.e. 
$\Omega$ is a tree on which $G$ acts minimally.  In other words, we find a 
nontrivial splitting of $G$ as a finite graph of groups.

The main technical difficulty is that hyperplanes can intersect their 
$G$--translates (indeed, in our application to Stallings's theorem, discussed 
below, this is the whole source of the problem).  So, naive approaches 
involving collapsing free faces cannot work, and this is why our procedure 
gives a subspace which is not in general a subcomplex.

We now turn to consequences of Theorem~\ref{thmi:panel_collapse}.  

\subsection*{Stallings's theorem}\label{sec:intro_stallings}
Stallings's 1968 theorem on groups with more than one end is one of the
most significant results of geometric group theory:

\begin{thmi}[{\cite{Dunwoody-fg}} Stallings's theorem, modern 
formulation]\label{thm:Stallings}
  If $G$ is a finitely generated group, $X$ a Cayley graph
  corresponding to a finite generating set, and $K$ a compact subgraph such that
  $X\setminus K$ has a at least two distinct unbounded connected
  components, then $G$ acts nontrivially and with finite edge stabilizers on a 
tree $T$.
\end{thmi}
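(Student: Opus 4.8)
The plan is to cubulate the wall that the hypothesis provides, to observe that the resulting CAT(0) cube complex has \emph{compact} hyperplanes, and then to collapse it to a tree by repeatedly invoking Theorem~\ref{thmi:panel_collapse}.

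First I would produce the wall. Write $X^{(0)}$ for the vertex set of $X$, choose an unbounded connected component $C$ of $X\setminus K$, and set $A=C^{(0)}$ and $A^{*}=X^{(0)}\setminus A$. Since $X\setminus K$ has at least two unbounded components, $A$ and $A^{*}$ are both infinite, and as $X$ is a connected locally finite graph each of them is \emph{deep}: it contains vertices arbitrarily far (in $X$) from the set $\delta A$ of edges joining $A$ to $A^{*}$. Moreover $\delta A$ is finite, since every such edge is incident to the finite subgraph $K$. Left multiplication permutes $\{A,A^{*}\}$ and its $G$--translates, giving a $G$--invariant wallspace with a single orbit of walls, and Sageev's construction \cite{Sageev97} then yields a finite--dimensional CAT(0) cube complex $\Psi$ with a $G$--action having one orbit of hyperplanes; because $\delta A$ is finite and $G$ is finitely generated, this action is cocompact and the orbit map $G\to\Psi$ is a quasi-isometric embedding \cite{Sageev97}.

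The point that makes Stallings's conclusion come out is that \emph{the hyperplane stabilisers are finite}. Indeed, the $G$--stabiliser of a hyperplane $H$ of $\Psi$ is the setwise stabiliser of the corresponding pair $\{A',(A')^{*}\}$, a $G$--translate of $\{A,A^{*}\}$; this group permutes the finite nonempty edge set $\delta A'\subseteq X$, and the kernel of the induced action on $\delta A'$ fixes an edge of $X$ together with its endpoints, hence is trivial because $G$ acts freely on its Cayley graph. So $\stabilizer_{G}(H)$ is finite; and since $G$ acts cocompactly on $\Psi$, the finite group $\stabilizer_{G}(H)$ acts cocompactly on $H$, so $H$ has only finitely many cells. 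In particular every hyperplane of $\Psi$ is compact. Before collapsing, I would make two standard normalisations. First, passing to the first cubical subdivision of $\Psi$ I may assume $G$ acts without inversions in hyperplanes, which changes neither cocompactness, nor finiteness of hyperplane stabilisers, nor compactness of hyperplanes. Second, since the orbit map $G\to\Psi$ is a quasi-isometric embedding, $G$ has unbounded orbits and hence no global fixed point in $\Psi$, so by \cite[\S 3]{CapraceSageev} I may replace $\Psi$ by its essential core $\Psi^{\mathrm{ess}}\subseteq\Psi$: a nonempty, unbounded, $G$--invariant convex subcomplex on which $G$ acts cocompactly, essentially and without inversions, and still with finite hyperplane stabilisers and compact hyperplanes (a hyperplane of $\Psi^{\mathrm{ess}}$ is the intersection of one of $\Psi$ with $\Psi^{\mathrm{ess}}$). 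By the double--skewering lemma \cite{CapraceSageev}, some element of $G$ acts hyperbolically on $\Psi^{\mathrm{ess}}$.

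Now I would iterate Theorem~\ref{thmi:panel_collapse}, starting from $\Psi^{\mathrm{ess}}$. As long as the current complex $\Psi_{\bullet}$ has positive complexity it has a hyperplane $H$ that is not a single point; $\stabilizer_{G}(H)$ is finite (by part~(1) of the theorem, $H$ lies inside a hyperplane of the previous complex, so hyperplane stabilisers never grow), so $H$ is compact with at least one edge and therefore has a shallow halfspace, whence the hypothesis of Theorem~\ref{thmi:panel_collapse} is met. Each application strictly decreases the complexity (part~(4)) while preserving cocompactness (part~(2)), absence of inversions (part~(3)), finiteness of hyperplane stabilisers (part~(1)), and the existence of a hyperbolic element (part~(5)). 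After finitely many steps the complexity is $0$: the complex is a tree $T$, on which $G$ acts cocompactly, with finite edge stabilisers (which are hyperplane stabilisers), carrying a hyperbolic element and hence without a global fixed point --- precisely the desired conclusion. I expect the only genuine difficulty to be Theorem~\ref{thmi:panel_collapse} itself: as the introduction emphasises, the translates $gA$ of the original halfspace typically cross one another, so $\Psi$ is far from a tree and the collapse must be carried out without the luxury of remaining inside a subcomplex.
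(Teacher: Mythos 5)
Your proof is correct and takes essentially the same route as the paper's Corollary~\ref{cor:stallings}: cubulate the compact separating set via Sageev to obtain a cocompact $G$--action on a finite-dimensional CAT(0) cube complex with finite hyperplane stabilisers (hence compact hyperplanes), pass to the first cubical subdivision to kill inversions, and then iterate panel collapse (Corollary~\ref{cor:lower_complexity}) until the complexity vanishes, yielding a $G$--tree with finite edge stabilisers. The one place you diverge is the nontriviality argument: you pass to the Caprace--Sageev essential core, invoke double-skewering to manufacture a hyperbolic element, and carry that element through the collapses using item (5) of Theorem~\ref{thmi:panel_collapse}. The paper's route is shorter and needs less: it records that $G$ has no global fixed point in $\Psi_0$ (part of Sageev's theorem in~\cite{sageev1995ends}), and then, because each $\Psi_n$ is a $G$--invariant subspace of $\Psi_0$, the absence of a fixed point persists for free. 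Your detour also leans on the assertion that the orbit map $G\to\Psi$ is a quasi-isometric embedding, which is more than is needed and more than is obviously supplied by Sageev's construction; "no global fixed point'' is the clean statement and is what~\cite{sageev1995ends} delivers. Two small slips worth flagging: (i) the kernel of the $\stabilizer_G(H)$--action on the finite edge set $\delta A'$ need not be trivial --- a generator of order two flips the edges it labels --- but it is an elementary abelian $2$--group, so $\stabilizer_G(H)$ is still finite; and (ii) in your inductive step, "hyperplane stabilisers never grow'' should read "are virtually contained in those of the previous stage'' (a $\Psi_\bullet$--hyperplane is a component of $K\cap\Psi_\bullet$ and its stabiliser can permute the finitely many $\Psi$--hyperplanes through that intersection), which is still enough to keep them finite.
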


Theorem~\ref{thm:Stallings} is proved as Corollary~\ref{cor:stallings}
below. The essence of the proof is the fact that if $G$ acts on a
cocompactly on a CAT(0) cube complex with compact hyperplanes then,
successively applying Theorem \ref{thmi:panel_collapse}, we can make
$G$ act on a tree.

There are numerous proofs of Theorem~\ref{thm:Stallings}.  Our proof avoids 
certain combinatorial arguments (e.g. 
\cite{Kron,evangelidou2014cactus}) and analysis (see
\cite{kapovich2014energy}).  Ours is not the first proof of Stallings's theorem 
using CAT(0) cube complexes.  In fact, the ideas involved in the original proof 
anticipate CAT(0) cube complexes to some extent.  

Stallings's original proof (for the finitely presented case)
\cite{stallings1968torsion,stallings-gt3dm} precedes the development 
of Bass-Serre theory. Instead, Stallings
developed so-called \emph{bipolar structures}. Equipped with Bass-Serre theory, 
the problem boils down to dealing with a finite separating subset $K$ that 
intersects some of its $G$-translates.

Dunwoody, in \cite{dunwoody-1985}, while proving accessibility, gave a
beautiful geometric proof of Stallings's theorem in the almost
finitely presented case using the methods of patterns in polygonal
complexes. It is relatively easy to turn the separating set $K$ into a
\emph{track}. Using a minimality argument, he shows that it is
possible to cut and paste tracks until they become disjoint,
preserving finiteness and separation properties.

CAT(0) cube complexes became available via work of Gerasimov and 
Sageev~\cite{sageev1995ends,gerasimov1997semi}, employing the 
notion of \emph{codimension-1} subgroups.  (More generally, one can cubulate a 
\emph{wallspace}~\cite{nica2004cubulating,chatterji2005wall}.) Cube 
complexes are a very natural platform for addressing the intersecting cut-set 
problem. 

Niblo, in \cite{niblo2004geometric}, gave a proof of
Stallings's theorem using CAT(0) cube complexes, in fact proving something more 
general about codimension--$1$ subgroups (see Corollary \ref{cori:kropholler}.) His method is to cubulate, and 
then use the cube complex to get a 2-dimensional complex (the 
2-skeleton of the cube complex), on which he is then able to use a minimality 
argument for tracks to get disjoint cut-sets. So, Niblo's proof uses 
the CAT(0) cubical action as a way to get an action on a $2$--complex with a 
ready-made system of tracks, namely the traces of the hyperplanes on the 
$2$--skeleton.  

Our proof of Theorem~\ref{thm:Stallings}, based on panel collapse, is
fundamentally different.  Rather than performing surgery on Dunwoody
tracks in the $2$--skeleton of the cube complex, we collapse the
entire complex down to an essential tree with finite edge stabilizers.

\subsection*{Cube complexes associated to line 
patterns: an example when $G$ is not finitely generated}
Cashen and Macura, in \cite{cashen-macura}, prove a remarkable
rigidity theorem which states that to any free group equipped with a
rigid line pattern, there is a pattern preserving quasi-isometry to a
CAT(0) cube complex, the \emph{Cashen-Macura complex}, equipped with a
line pattern $(X,\mathcal L)$ so that any line pattern preserving
quasi-isometry between two free groups is conjugate to an isometry
between the Cashen-Macura complexes.

In their paper the authors ask whether the cut sets of the
decomposition used to construct $X$ can be chosen so that $X$ is a
tree. While not answering this question directly, we show how panel
collapse can bring $X$ to a tree. This
gives:

\begin{thmi}[{Theorem \ref{prop:cashen-macura-tree},
  c.f. \cite[Theorem 5.5]{cashen-macura}}]
  Let $F_i,\mathcal L_i,i=0,1$, be free groups equipped with a rigid
  line patterns. Then there are locally finite trees $T_i$ with 
  line patterns $\mathcal L_i$ and embeddings
  \begin{equation}\label{eqn:embedding}
    F_i\stackrel{\iota_i}{\hookrightarrow} \mathrm{Isom}(T_i)
  \end{equation}
  inducing cocompact isometric actions $F_i\circlearrowright T_i$,
  which in turn induce equivariant line pattern preserving
  quasi-isometries \[ \phi_i:F_i \to T_i.\] Furthermore for any line
  pattern preserving quasi-isometry $q:F_0\to F_1$ there is a line
  pattern preserving isometry $\alpha_q$ such that the following
  diagram of line pattern preserving quasi-isometries commutes up to
  bounded distance:\[
    \begin{tikzpicture}[scale=1.5]
      \node (TL1) at (-1,0) {($T_0,\mathcal L_0)$};
      \node (TL2) at (1,0) {($T_1,\mathcal L_1)$};
      \node (F1L) at (-1,-1) {($F_0,\mathcal L_0)$};
      \node (F2L) at (1,-1) {($F_1,\mathcal L_1)$};
      \draw[->] (F1L) --node[left]{$\phi_0$} (TL1);
      \draw[->] (F2L) --node[right]{$\phi_1$} (TL2);
      \draw[->] (F1L) --node[above]{$q$} (F2L);
      \draw[->] (TL1) --node[above]{$\alpha_q$} (TL2);
    \end{tikzpicture}.
  \]
  We note that the actions of $F_i$ on $T_i$ are free since the
  quasi-isometries $\phi_i:F_i \to T_i$ are equivariant.
\end{thmi}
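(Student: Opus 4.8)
The plan is to run the Cashen--Macura complexes through Theorem~\ref{thmi:panel_collapse}. Write $X_i$ for the Cashen--Macura complex of $(F_i,\mathcal L_i)$. From \cite{cashen-macura} I take: a cocompact isometric action $F_i\curvearrowright X_i$; the line pattern $\mathcal L_i$ on $X_i$, which up to bounded Hausdorff distance is the family of axes of the $F_i$--translates of the finitely many primitive elements $w^i_1,\dots,w^i_{k_i}$ whose conjugacy classes determine $\mathcal L_i$; an $F_i$--equivariant, line--pattern--preserving quasi-isometry $\psi_i\colon F_i\to X_i$; and the rigidity statement that every line--pattern--preserving quasi-isometry $q\colon F_0\to F_1$ satisfies $\beta_q\circ\psi_0\simeq\psi_1\circ q$ for some isometry $\beta_q\colon X_0\to X_1$ carrying $\mathcal L_0$ to $\mathcal L_1$. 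I also use that $X_i$ is locally finite with \emph{compact} hyperplanes; compactness of the hyperplanes is the essential point.

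To collapse $X_i$ to a tree, first replace it by its cubical subdivision, so that $F_i$ acts without inversions in hyperplanes; by parts (1), (3) and~(6) of Theorem~\ref{thmi:panel_collapse}, this property, together with compactness of hyperplanes and local finiteness, persists under the construction. As recalled in the introduction, a cocompact action on a CAT(0) cube complex with compact hyperplanes is hyperplane-essential if and only if the complex is a tree; so as long as the current complex is not a tree, some hyperplane stabiliser fails to act essentially, Theorem~\ref{thmi:panel_collapse} applies, and the complexity strictly drops. By induction on complexity the process terminates at a locally finite tree $T_i\subset X_i$ with a cocompact $F_i$--action, which we take to be minimal by passing to the essential core. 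By part~(5) of Theorem~\ref{thmi:panel_collapse}, applied at each stage, each $w^i_j$---hyperbolic on $X_i$---remains hyperbolic on $T_i$.

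Composing the $F_i$--equivariant deformation retractions from the successive applications of Theorem~\ref{thmi:panel_collapse} gives an $F_i$--equivariant deformation retraction $r_i\colon X_i\to T_i$. Since $r_i$ is continuous and equivariant and $F_i$ acts cocompactly on the geodesic spaces $X_i$ and $T_i$, it is within bounded distance of the inclusion $T_i\hookrightarrow X_i$ and hence an $F_i$--equivariant quasi-isometry. Thus $r_i$ carries $\mathcal L_i$ onto an $F_i$--invariant family of quasi-lines in $T_i$ which, as the $w^i_j$ are hyperbolic on $T_i$, agrees up to bounded Hausdorff distance with the family of axes of the $F_i$--translates of the $w^i_j$: this is the line pattern $\mathcal L_i$ on $T_i$. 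Post-composing $\psi_i$ with $r_i$ gives the required $F_i$--equivariant, line--pattern--preserving quasi-isometry $\phi_i:=r_i\circ\psi_i\colon F_i\to T_i$. Its equivariance forces the action on $T_i$ to be free---a nontrivial element fixing a point of $T_i$ would have bounded powers in $T_i$, hence, $\phi_i$ being a quasi-isometry, bounded powers in $F_i$, contradicting torsion-freeness---so $\iota_i\colon F_i\hookrightarrow\Isom(T_i)$ is injective, as the statement records.

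The main obstacle is functoriality. Given a line--pattern--preserving quasi-isometry $q\colon F_0\to F_1$, the Cashen--Macura isometry $\beta_q$ descends, via the restriction of $r_1\circ\beta_q$ to $T_0$, to a line--pattern--preserving quasi-isometry $\bar\beta_q\colon T_0\to T_1$ of trees, and chasing the identities $\phi_i=r_i\circ\psi_i$ and $\beta_q\circ\psi_0\simeq\psi_1\circ q$ shows the required square commutes up to bounded distance with $\bar\beta_q$ in place of $\alpha_q$. What remains---and this is the delicate step---is to replace $\bar\beta_q$ by an isometry $\alpha_q$ at bounded distance; equivalently, to show $(T_i,\mathcal L_i)$ is rigid, in that every line--pattern--preserving quasi-isometry between the $T_i$ lies at bounded distance from a pattern-preserving isometry. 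I would obtain this by carrying out all the collapses above equivariantly with respect to the full group $\Isom(X_i,\mathcal L_i)$ of pattern-preserving isometries of $X_i$---which acts cocompactly since $F_i$ does, and which need not be discrete, illustrating the generality of Theorem~\ref{thmi:panel_collapse}---and by making the collapsed hyperplanes a canonical $\Isom(X_i,\mathcal L_i)$--orbit at each stage (e.g. the extremal codimension--$2$ hyperplanes picked out by a minimality condition on the associated shallow halfspace), so that $(T_i,\mathcal L_i)$ depends on $(X_i,\mathcal L_i)$ only up to $\Isom(X_i,\mathcal L_i)$--equivariant isometry; then $\beta_q$, conjugating $\Isom(X_0,\mathcal L_0)$ to $\Isom(X_1,\mathcal L_1)$, transports the collapse data and descends to the required $\alpha_q$. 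A more intrinsic alternative is that, trees being hyperbolic, $\bar\beta_q$ extends to a quasi-M\"obius homeomorphism $\partial T_0\to\partial T_1$ matching the endpoint pairs of the two patterns, and rigidity of $\mathcal L_i$ should let one recover the simplicial tree $T_i$ from $\partial T_i$ with its pattern, whence the boundary map comes from an isometry. Establishing this rigidity is where I expect essentially all the work beyond Theorem~\ref{thmi:panel_collapse} and \cite{cashen-macura} to lie.
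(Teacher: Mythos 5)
Your proposal is correct and is essentially the paper's own argument: note from~\cite[\S 5.2.2]{cashen-macura} that the Cashen--Macura complex $X$ is locally finite with compact hyperplanes, and then iterate Corollary~\ref{cor:lower_complexity} equivariantly for the full isometry group of $X$ (the paper takes $G=\mathrm{Isom}(X)$, which acts cocompactly since $F_i$ does) to collapse to a locally finite tree $T$ with $\mathcal L_T$ the geodesic representatives of the quasi-lines. The one place you are more cautious than the paper is the final ``rigidity'' step: the paper regards this as automatic, precisely because the collapse is carried out $\mathrm{Isom}(X)$--equivariantly from the outset---so the Cashen--Macura isometry $\beta_q\colon X_0\to X_1$, which conjugates the chosen $\mathrm{Isom}(X_0)$--orbit of panels to the one on $X_1$, sends $T_0\subset X_0$ onto $T_1\subset X_1$ and restricts to the required isometry $\alpha_q$; there is no need for a separate rigidity argument for $(T_i,\mathcal L_i)$ or a detour through $\bar\beta_q=r_1\circ\beta_q|_{T_0}$.
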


This is proved in Section \ref{sec:cashen_macura}.  By passing to
trees, tree lattice methods can be brought to bear and the authors
suspect that this result could play an important role in the
description of quasi-isometric rigidity in the class of graphs of free
groups with cyclic edge groups.

This application illustrates that in Theorem
\ref{thmi:panel_collapse}, we are not requiring $G$ to be finitely
generated. The result holds even when dealing with an uncountable
totally disconnected locally compact group of isometries of some cube
complex $X$.

\subsection*{The Kropholler conjecture}\label{subsec:kropholleri}
Around 1988, Kropholler made the following conjecture: given a finitely generated group $G$ and a subgroup $H\leq G$, then 
the existence of a proper $H$--almost invariant subset $A$ of $G$ with $AH=A$ ensures that $G$ splits nontrivially over a 
subgroup commensurable with a subgroup of $H$.  This conjecture has been verified under various additional 
hypotheses~\cite{KarNiblo,Kropholler,DunwoodyRoller,niblo2004geometric}; a proof using a different approach to the one taken 
here appears in~\cite{dunwoody2017structure}, although this proof is believed, at the time of writing, to contain a 
gap\footnote{Personal communication from Martin Dunwoody and Alex Margolis.}.  Niblo and Sageev have observed that the 
Kropholler conjecture can 
be rephrased in terms of actions on CAT(0) cube complexes~\cite{NibloSageev}.  In cubical language, the conjecture states 
that if $G$ acts essentially on a CAT(0) cube complex with a single $G$--orbit of hyperplanes, and $H$ is a hyperplane 
stabilizer acting with a global fixed point on its hyperplane, then $G$ splits over a subgroup commensurable with a subgroup 
of $H$.  In Section~\ref{subsec:kropholler}, we prove this under the additional hypothesis that $G$ acts cocompactly on the 
CAT(0) cube complex in question (making no properness assumptions on either the action or the cube complex).  Specifically, 
we use panel collapse to obtain a very short proof of:

\begin{cori}[Corollary~\ref{cor:kropholler}]\label{cori:kropholler}
Let $G$ be a finitely generated group and $H\leq G$ a finitely
generated subgroup with $e(G,H)\geq2$. Let $\Psi$ be the dual cube
complex associated to the pair $(G,H)$, so that $\Psi$ has one
$G$--orbit of hyperplanes and each hyperplane stabiliser is a
conjugate of $H$.  Suppose furthermore that:
\begin{itemize}
 \item $G$ acts on $\Psi$ cocompactly;
 \item $H$ acts with a global fixed point on the associated hyperplane.
\end{itemize}
Then $G$ admits a nontrivial splitting over a subgroup commensurable with a subgroup of $H$.
\end{cori}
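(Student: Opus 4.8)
The plan is to apply Panel Collapse (Theorem~\ref{thmi:panel_collapse}) to the dual cube complex $\Psi$ and induct on complexity until we reach a tree, then read off the splitting from Bass--Serre theory. First I would observe that the hypotheses of Theorem~\ref{thmi:panel_collapse} are met: $G$ acts cocompactly on $\Psi$ by assumption, and since $\Psi$ is the Sageev--Gerasimov dual to the pair $(G,H)$, the action is without inversions in hyperplanes (or can be made so by subdividing, which does not change any of the relevant properties). The key input is the second bulleted hypothesis: $H$ acts with a global fixed point on its hyperplane $\hat H$. Since there is a single $G$--orbit of hyperplanes and every hyperplane stabiliser is conjugate to $H$, every hyperplane of $\Psi$ has a stabiliser fixing a point of that hyperplane. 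A stabiliser fixing a point cannot act essentially unless the hyperplane is a single point; so unless $\Psi$ is already a tree, some hyperplane stabiliser fails to act essentially on its hyperplane, which is exactly the hypothesis needed to invoke panel collapse.

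Next I would run the induction. Apply Theorem~\ref{thmi:panel_collapse} to get $\Psi_\bullet \subset \Psi$, a CAT(0) cube complex on which $G$ acts cocompactly, without inversions, with strictly smaller complexity, and such that each hyperplane of $\Psi_\bullet$ is a component of $K \cap \Psi_\bullet$ for some hyperplane $K$ of $\Psi$. The crucial bookkeeping point is that the hyperplane stabilisers do not get worse: since a hyperplane $H_\bullet$ of $\Psi_\bullet$ sits inside a hyperplane $K$ of $\Psi$, its stabiliser $\stabilizer_G(H_\bullet)$ is a subgroup of $\stabilizer_G(K)$, hence commensurable with a subgroup of $H$ — in fact contained in a conjugate of $H$ up to the (trivial, since these are honest subgroups) commensuration. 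Because complexity is a tuple of nonnegative integers that strictly decreases in lexicographic order, after finitely many steps we reach complexity $0$: a tree $T$ on which $G$ acts cocompactly and without inversions, with every edge stabiliser contained in a conjugate of $H$ (hence commensurable with a subgroup of $H$). Property~(5) of Theorem~\ref{thmi:panel_collapse}, preserved along the chain, guarantees that any $g$ that was hyperbolic on $\Psi$ stays hyperbolic on $T$; since the original $G$--action on $\Psi$ was essential with a nonempty set of hyperplanes, $\Psi$ is unbounded and not a single point, so $G$ contains elements acting hyperbolically, and thus the action on $T$ has no global fixed point.

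Finally, I would invoke Bass--Serre theory: a cocompact action of $G$ on a tree $T$ without inversions and without a global fixed point yields a nontrivial splitting of $G$ as a finite graph of groups, in which each edge group is an edge stabiliser for the $G$--action, hence commensurable with a subgroup of $H$. This is the desired conclusion. The main obstacle — or rather the point demanding care — is verifying at each stage of the induction that the cube complex produced still satisfies the running hypotheses needed to reapply panel collapse: namely that every hyperplane stabiliser still fixes a point of its hyperplane (equivalently, acts inessentially on it) as long as we are not yet at a tree. This follows because a hyperplane of $\Psi_\bullet$ is cut out of a hyperplane $K$ of $\Psi$ on which $\stabilizer_G(K)$ acts with a fixed point, and the induced action on the component inside $\Psi_\bullet$ still has bounded orbits, hence (by cocompactness of the hyperplane action, which persists) a fixed point; so the inductive hypothesis is maintained, and one should spell out this step rather than treat it as automatic. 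An alternative, cleaner phrasing avoids tracking fixed points on every hyperplane and instead simply observes that a cocompact action of $G$ on a CAT(0) cube complex with compact hyperplane quotients is hyperplane-essential precisely when the complex is a tree, using the remark in the introduction; but here hyperplanes of $\Psi$ need not be compact, so the fixed-point argument is the robust route.
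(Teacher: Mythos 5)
Your overall strategy --- panel collapse by induction on complexity down to a tree, then Bass--Serre --- is the same as the paper's, and most of the steps are sound, but there are two places where your version is less clean than the paper's and one where it is imprecise.

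First, you dismiss the ``cleaner phrasing'' that tracks boundedness of hyperplanes on the grounds that ``hyperplanes of $\Psi$ need not be compact.'' That conflates compactness with boundedness, and boundedness is exactly what the paper uses. The observation that drives the whole argument is that the hyperplane $D$ with $\stabilizer_G(D)=H$ has \emph{finite diameter}: by cocompactness of the $G$--action, $H=\stabilizer_G(D)$ acts coboundedly on $D$, and since $H$ also fixes a point of $D$, the orbit of that fixed point is a single point, so $D$ is bounded. Hence every hyperplane of $\Psi$ is bounded (single orbit), and every hyperplane of $\Psi_\bullet$ is bounded (it is a component of $\Psi_\bullet\cap gD$). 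Lemma~\ref{lem:compact_extremal} requires only boundedness, not compactness, so ``all hyperplanes bounded'' is the right inductive invariant and needs no separate fixed-point argument at each stage. Your route --- deducing a fixed point on each new hyperplane from bounded orbits --- does work (finite dimension gives a circumcentre), but it rederives boundedness along the way; you should instead lead with it.

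Second, your claim that the stabiliser of a $\Psi_\bullet$--hyperplane $H_\bullet$ is ``a subgroup of $\stabilizer_G(K)$, hence \dots\ in fact contained in a conjugate of $H$'' is wrong in general. Because $\Psi_\bullet$ contains diagonal cubes, a single hyperplane $H_\bullet$ can be cut out by the intersection of several pairwise--crossing $\Psi$--hyperplanes $K_1,\dots,K_r$, and $\stabilizer_G(H_\bullet)$ need only permute that finite set, not preserve any single $K_i$. What one actually gets is that $\stabilizer_G(H_\bullet)$ is \emph{virtually} contained in some $\stabilizer_G(K_i)$, with index at most $r\le\dim\Psi$ (this is what the paper records). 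Iterating over the finitely many collapse steps still yields edge stabilisers commensurable with subgroups of $H$, which is what the corollary asserts, but the literal containment you claim can fail.

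Finally, your nontriviality argument assumes the $G$--action on $\Psi$ is essential, which is not among the hypotheses. The cleaner reason, as in the paper's Stallings proof, is that $G$ fixes no $0$--cube of the Sageev dual $\Psi$ when $e(G,H)\ge2$, and since all $0$--cubes of $\Psi$ survive each collapse and the inclusions $\Psi_\bullet\hookrightarrow\Psi$ are $G$--equivariant, $G$ fixes no vertex of the terminal tree.
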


In the case where $G$ is word-hyperbolic and $H$ a quasiconvex
codimension--$1$ subgroup, this follows from work of
Sageev~\cite{Sageev97}, but the above does not rely on hyperbolicity,
only cocompactness. A similar statement, \cite[Theorem
B]{niblo2004geometric}, was proved by Niblo under the additional
hypothesis that hyperplanes are compact. In our case the
cocompactness hypothesis and  the fact that $H$ acts with a fixed
point imply that hyperplanes have bounded diameter but need not be
compact. We hope that a future generalisation of panel collapse will
enable a CAT(0) cubical proof of the Kropholler conjecture that does
not require a cocompact action on the cube complex.

\subsection*{Other examples and applications}\label{subsec:other_examples}
Theorem~\ref{thmi:panel_collapse} has other applications in group theory.  For 
example, 
Theorem~\ref{thmi:panel_collapse} is used in a forthcoming paper of 
Hagen-Wilton to study subsets of outer space and the free splitting 
complex fixed by finite subgroups of the outer automorphism group of a free 
group~\cite{HagenWilton}.  In that paper, the authors construct, given a group 
$G$ acting cocompactly, essentially, and hyperplane-essentially on finitely 
many CAT(0) cube complexes, an analogue of the Guirardel core for those 
actions.  Among other applications, this core can be used to give a new proof 
of the Nielsen realisation theorem for $\Out(F_n)$, which is done 
in~\cite{HagenWilton} using Theorem~\ref{thmi:panel_collapse} (without using 
Stallings's theorem).  Since the original proof relies on Stallings's 
theorem~\cite{Culler:nielsen}, it is nice to see that in fact Stallings's 
theorem and Nielsen realisation for $\Out(F_n)$ in fact follow independently 
from the same concrete cubical phenomenon.

Theorem~\ref{thmi:panel_collapse} thus expands the arena in which the 
techniques from~\cite{HagenWilton} will be applicable, by showing that the 
hyperplane-essentiality condition can always be arranged to hold.  This is of 
particular 
interest because of ongoing efforts to define a ``space of cubical 
actions'' for a given group.  Even for free groups, it's not completely clear 
what this space should be, i.e. which cubical actions count as points in this 
space.  However, since known arguments giving e.g. connectedness depend on the 
aforementioned version of the Guirardel core, one should certainly restrict 
attention to cubical actions where the hyperplane stabilisers act essentially 
on hyperplanes.  This is what we mean in asserting that 
Theorem~\ref{thmi:panel_collapse} is aimed at future applications.  We 
investigate this very slightly in the present paper, in 
Section~\ref{sec:antenna}, where we show that the exotic cubulations of free 
groups constructed by Wise in~\cite{Wise:recube} do not have essentially-acting 
hyperplane stabilisers, and are thus subject to panel collapse.

\subsection*{Outline of the paper} Throughout the paper, we assume
familiarity with basic concepts from the theory of group actions on
CAT(0) cube complexes; see e.g.~\cite{Sageev:pcmi,Wise:RR}.  In
Section~\ref{sec:ingredients}, we define \emph{panels} in a CAT(0)
cube complex and discuss the important notion of an \emph{extremal
  panel}.  In Section~\ref{sec:cube_def_ret}, we describe panel
collapse within a single cube, leading to the proof of
Theorem~\ref{thmi:panel_collapse} in
Section~\ref{sec:cube_def_ret_extremal}. Section~\ref{sec:examples}
contains applications.

\subsection*{Acknowledgements} We thank the organisers of the
conference \emph{Geometric and Asymptotic Group Theory with
  Applications 2017}, at which we first discussed the ideas in this
work.  MFH thanks Henry Wilton for discussions which piqued his
interest in mutilating CAT(0) cube complexes, and Vincent Guirardel
for discussion about possible future generalizations
of panel collapse. The authors are also grateful to Mladen Bestvina
and Brian Bowditch for bringing up ideas related to the collapse of
CAT(0) cube complexes, and to Michah Sageev for encouraging us to apply panel collapse to the Kropholler conjecture.  We 
also thank Alex Margolis for pointing out the reference~\cite{niblo2004geometric} and the anonymous referee for many helpful 
comments which improved the exposition and simplified certain proofs.  MFH was supported by Henry Wilton's EPSRC
grant EP/L026481/1 and by EPSRC grant EP/R042187/1.

\section{Ingredients}\label{sec:ingredients}

\subsection{Blocks and panels}\label{subsec:block_panel}
Throughout this section, $\Psi$ is an arbitrary CAT(0) cube complex and all 
hyperplanes, subcomplexes, etc. lie in $\Psi$.  Recall that a 
\emph{codimension--$n$ hyperplane} in $\Psi$ is the (necessarily nonempty) 
intersection of $n$ distinct, pairwise intersecting hyperplanes. 

We will endow $\Psi$ with the usual CAT(0) metric $\dist_2$ (which we refer to 
as the \emph{$\ell_2$--metric}) in which all cubes are 
Euclidean unit cubes.  
There is also an \emph{$\ell_1$--metric} $\dist_1$, which extends the usual 
graph-metric on the $1$--skeleton.  Recall from~\cite{Haglund:semisimple} that 
a 
subcomplex is convex with respect to $\dist_2$ if and only if it is convex with 
respect to $\dist_1$ (equivalently, it is equal to the intersection of the 
combinatorial halfspaces containing it), but we will work with $\ell_2$--convex 
subspaces that are not subcomplexes.

Recall that convex subcomplexes of CAT(0) cube complexes have the \emph{Helly 
property}: if $C_1,\ldots,C_n$ are pairwise-intersecting convex subcomplexes, 
then $\bigcap_{i=1}^nC_i\neq\emptyset$.

Given $\mathcal A\subset\Psi$, the \emph{convex hull} of $\mathcal A$ is the 
intersection of all convex subcomplexes containing $\mathcal A$.  In 
particular, if $\mathcal A$ is a set of $0$--cubes in some cube $c$, then the 
convex hull of $\mathcal A$ is just the smallest subcube of $c$ containing all 
the $0$--cubes in $\mathcal A$.

The word ``hyperplane'', unless stated otherwise, means 
``codimension--$1$ hyperplane''.

\begin{defn}[Codimension--$n$ carrier]\label{defn:carrier}
  The \emph{carrier} $N(H)$ of a codimension $n$ hyperplane $H$ is the
  union of closed cubes containing $H$. We have, by results 
in~\cite{sageev1995ends},\[
    N(H) = H \times \interval{[}{-\half,\half}{]}^n.
  \]
\end{defn}

If $H$ is a codimension--$n$ hyperplane, then $H=\bigcap_{i=1}^nE_i$, where each 
$E_i$ is a hyperplane, and $N(H)=\bigcap_{i=1}^nN(E_i)$, where each $N(E_i)\cong 
E_i\times\interval{[}{-\half,\half}{]}$.

\begin{defn}[Blocks, panels, parallel]\label{defn:panel}
  Let $H$ be a codimension 2 hyperplane.  Then the carrier
  $\bur H = H \times \interval{[}{-\half,\half}{]}^2 $ is called a
  \emph{block}. The four closed
  faces\[ H \times \interval{[}{-\half,\half}{]} \times
    \interval{\{}{\pm \frac 1 2}{\}}, H \times \interval{\{}{\pm \frac
      1 2}{\}} \times \interval{[}{-\half,\half}{]}
  \] are called \emph{panels}. Panels are \emph{parallel} if they have
  empty intersection.  Equivalently, panels are parallel if they intersect 
exactly the same hyperplanes.  (More generally, two convex subcomplexes are 
said to be parallel if they intersect the same hyperplanes; parallel 
subcomplexes are isomorphic, see~\cite{BHS:I}.)  Since carriers of 
codimension--$1$ hyperplanes 
are convex subcomplexes, and each carrier of a codimension--$n$ hyperplane is 
the intersection of carriers of $n$ mutually intersecting codimension--$1$ 
hyperplanes, codimension--$n$ carriers are also convex.  From this and the 
product structure, it follows that panels are convex (justifying the term 
``parallel'').

If $P=H \times \interval{[}{-\half,\half}{]} \times
    \interval{\{}{\frac 1 2}{\}}\subset\bur H$ is a panel, and $e$ is a 
$1$--cube of $P$, we say that $e$ 
is \emph{internal (to $P$)} if it is not contained in $H \times 
\interval{\{}{\pm\half}{\}} \times    \interval{\{}{\frac 1 2}{\}}$.  The 
\emph{interior} of $\bur{H}$ is $$H\times\interval{(}{-\half,\half}{)}^2,$$ and 
the \emph{inside} of $P=H \times \interval{[}{-\half,\half}{]} \times
    \interval{\{}{\frac 1 2}{\}}$ is $\interior{P}=H \times 
\interval{(}{-\half,\half}{)} \times
    \interval{\{}{\frac 1 2}{\}}.$
\end{defn}

We emphasise that a subcomplex can be a panel in multiple ways, with different 
inside, according to which hyperplane is playing the role of $H$.  When we talk 
about a specific panel, it is with a fixed choice of $H$ in mind.  The choice of 
$H$ determines the inside of the panel uniquely.

\subsection{Extremal panels}\label{sec:extremal}
\begin{defn}[Extremal side, extremal panel]\label{defn:extremal}
  Let $H,E \subset \Psi$ be distinct intersecting hyperplanes. $H\cap E$ is 
\emph{extremal}
  in $H$ if the carrier $N_H(H\cap E)$ of $E\cap H$ in $H$ contains
  one of the halfspaces $H\cap \vec E$ of $H$ (here $\vec E$ is one of the 
components of the complement of $E$ in $\Psi$). The union of cubes of
  $H$ contained in $H \cap \vec E$ is called the \emph{extremal side}
  $H_E^+$ of $H$ with respect to $E$. The \emph{extremal panel}
  $P(H^+_E)$ is the minimal subcomplex of $\Psi$ containing all cubes
  intersecting $H^+_E$.
\end{defn}

If $H\cap E$ is extremal in $H$, then $P(H^+_E)$ is a panel of
$\bur{H\cap E}$ in the sense of Definition~\ref{defn:panel}.  We say
$H$ \emph{abuts} $P(H^+_E)$ and $E$ \emph{extremalises} $P(H^+_E)$.
Given an extremal panel $P$ each maximal cube $c$ of $P$ is contained
in a unique maximal cube of $\Psi$ and $c$ has codimension--$1$ in
that cube.

\begin{lem}[Finding extremal hyperplanes]\label{lem:compact_extremal}
Let $H$ be a bounded CAT(0) cube complex.  Then either $H$ is a single 
$0$--cube, or $H$ contains an extremal hyperplane.
\end{lem}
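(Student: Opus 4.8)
The plan is to prove this by a diameter-reduction argument, producing an extremal hyperplane from a hyperplane realizing (or nearly realizing) the diameter of $H$. Since $H$ is a bounded CAT(0) cube complex, it is in particular finite-dimensional, and I would like to first reduce to the case that $H$ is finite. If $H$ is not a single $0$--cube it contains a $1$--cube, hence a hyperplane $E$ (meaning a codimension--$1$ hyperplane of $H$); I want to show that among all hyperplanes of $H$, one can be chosen to be extremal, i.e. to have one of its two halfspaces contained in the combinatorial neighbourhood $N_H(E)$ of $E$.

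\medskip

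First I would set up the following: for a hyperplane $E$ of $H$ with halfspaces $\vec E$ and $\cev E$, say $E$ is extremal on the side $\vec E$ if the halfspace $\vec E$ is contained in $N_H(E)$. Equivalently, $\vec E$ contains no $1$--cube of $H$ dual to a hyperplane other than $E$ itself, i.e. no hyperplane $E'\neq E$ of $H$ is entirely contained in $\vec E$. I would then argue by contradiction: suppose $H$ is not a single $0$--cube but no hyperplane is extremal on either side. Pick any $1$--cube, giving a hyperplane $E_0$; since $E_0$ is not extremal on the side $\vec{E_0}$, there is a hyperplane $E_1\neq E_0$ with $E_1\subset\vec{E_0}$, i.e. the halfspace $\cev{E_1}$ (the one containing $E_0$) strictly contains the halfspace $\cev{E_0}$ on the $E_0$--side — more precisely $\cev{E_0}\subsetneq\vec{E_1}$ or the analogous strict containment of halfspaces. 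Continuing, non-extremality on the relevant side always lets me find a new hyperplane whose halfspace on one side strictly contains the previous one, producing an infinite strictly descending (equivalently, strictly ascending) chain of halfspaces of $H$. This contradicts boundedness: a strictly nested infinite chain of halfspaces forces either infinite diameter or an infinite cube (infinite-dimensional vertex), since each step in such a chain is witnessed by a $1$--cube, and these $1$--cubes are pairwise distinct and can be assembled (using the Helly property / the structure of the intersection of the corresponding halfspaces) into either an unbounded geodesic or an unboundedly high-dimensional cube.

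\medskip

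More concretely, and perhaps cleaner: take $E$ to be a hyperplane of $H$ that maximises, among all hyperplanes, the diameter $\mathrm{diam}(\vec E)$ of one of its halfspaces — this maximum exists since $H$ is bounded (so all diameters are bounded) and, after passing to a finite convex subcomplex on which nothing is lost, the set of relevant diameters is finite, or one can argue the supremum is attained by a direct nesting argument. I claim such an $E$, chosen so that $\vec E$ has minimal diameter among halfspaces, is extremal on the $\vec E$ side. Indeed if not, some $E'$ satisfies $E'\subset\vec E$; then one halfspace $\vec{E'}$ of $E'$ is contained in $\vec E$ and, being a proper subset not containing the carrier of $E$ in $\vec E$, has strictly smaller diameter, contradicting minimality. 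The existence of the minimum is the point that needs care: I would establish it by noting that halfspaces of $H$ contained in a fixed halfspace $\vec{E_0}$ form a set whose diameters take values in a well-ordered set bounded above, or simply run the descending-chain argument above and observe it must terminate by boundedness (a strictly descending chain of halfspaces, each a proper subcomplex of the previous, cannot be infinite in a bounded — hence finite-dimensional with bounded diameter, hence finite — CAT(0) cube complex).

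\medskip

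The main obstacle I anticipate is handling the possibility that $H$ is bounded but not locally finite, so that $H$ is \emph{not} a finite complex — there could be infinitely many hyperplanes crossing a single bounded region. In that case the naive ``pick a diameter-maximising hyperplane'' needs justification, since an infinite set of diameters bounded above need not attain its maximum. I would resolve this by the descending-chain formulation: start with any halfspace, and repeatedly replace it by a strictly smaller halfspace whenever extremality fails; each replacement strictly decreases the $\ell_1$--diameter by at least... well, not necessarily by a fixed amount, so I would instead track the halfspace itself, using that a strictly decreasing chain $\vec{E_0}\supsetneq\vec{E_1}\supsetneq\cdots$ of combinatorial halfspaces in a CAT(0) cube complex of bounded diameter must be finite — because the hyperplanes $E_0,E_1,E_2,\dots$ are then pairwise distinct and the $1$--cubes dual to them that witness the strict containments can be chosen consistently to form an infinite geodesic ray in the $1$--skeleton of $H$, contradicting bounded diameter. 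Once this finiteness of descending chains is in hand, the process terminates at an extremal hyperplane, completing the proof.
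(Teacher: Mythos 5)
Your core argument — run a strictly descending chain of halfspaces $\vec E_0\supsetneq\vec E_1\supsetneq\cdots$, each step witnessed by a hyperplane entirely contained in the previous halfspace, and observe that the $E_i$ are then pairwise distinct, non-crossing and nested, so picking one vertex in each ``gap'' $\vec E_i\setminus\vec E_{i+1}$ produces points whose pairwise distances grow without bound, contradicting boundedness — is correct and does prove the lemma. The paper takes a related but cleaner route: fix a single $0$--cube $x$, and choose $K$ to maximise $d=\dist(x,N(K))$ over all hyperplanes $K$. Since $d$ is a natural number and $H$ is bounded, this maximum is genuinely attained; then $K$ cannot separate any hyperplane from $x$ (the separated hyperplane would be strictly farther), which is exactly the statement that the far halfspace of $K$ is contained in $N(K)$, i.e.\ $K$ is extremal. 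The paper's choice of potential function makes both attainment and strict monotonicity automatic, which is what you had to work to supply.

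Two smaller points worth flagging. First, your diameter-minimisation variant has a real gap: if $\vec{E'}\subsetneq\vec E$, it does \emph{not} follow that $\diam(\vec{E'})<\diam(\vec E)$ — a proper convex subcomplex can have the same $\ell_1$--diameter as its ambient complex (think of chopping one column off a long thin grid along a hyperplane parallel to the long direction). So ``being a proper subset not containing the carrier of $E$ in $\vec E$, has strictly smaller diameter'' is not justified, and this branch of the argument does not go through as written. You correctly fall back on the chain argument, which does not need this. Second, your worry that ``an infinite set of diameters bounded above need not attain its maximum'' is misplaced here: diameters and combinatorial distances are natural numbers, and a set of naturals bounded above always attains its maximum, local finiteness or not. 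This is precisely what the paper exploits; the actual subtlety in a diameter-based approach is strict monotonicity, not attainment.
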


\begin{proof}
Let $x\in H$ be a $0$--cube and let $K$ be a hyperplane so that 
$d=\dist(x,N(K))$ is maximal as $K$ varies over all hyperplanes; such a $K$ 
exists because $d\in\naturals$ and $H$ is bounded.  By construction, $K$ cannot 
separate a hyperplane of $H$ from $x$, and thus $K$ is extremal in $H$.
\end{proof}

Lemma~\ref{lem:compact_extremal} immediately yields:

\begin{cor}\label{cor:compact_hyp}
Let $\Psi$ be a CAT(0) cube complex with a compact hyperplane that is not a single point.  Then $\Psi$ contains an extremal panel.
\end{cor}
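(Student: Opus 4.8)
The plan is to deduce Corollary~\ref{cor:compact_hyp} directly from Lemma~\ref{lem:compact_extremal} together with the definitions of \emph{extremal panel} and \emph{block} recorded above. Suppose $\Psi$ has a compact hyperplane $H$ that is not a single point. Since $H$ is itself a CAT(0) cube complex (of one lower dimension) and is compact, hence bounded, Lemma~\ref{lem:compact_extremal} applies to $H$: as $H$ is not a single $0$--cube, it contains an extremal hyperplane $F$. Here $F$ is a hyperplane \emph{of the cube complex $H$}, and extremality means the carrier $N_H(F)$ of $F$ in $H$ contains one of the two halfspaces of $H$ associated to $F$.

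The key translation step is to observe that a hyperplane $F$ of $H$ is precisely the intersection $H\cap E$ for a suitable hyperplane $E$ of $\Psi$ that crosses $H$. Concretely: using the product structure $N(H)\cong H\times[-\half,\half]$ from Definition~\ref{defn:carrier}, the midcube of a cube of $H$ dual to $F$ spans, in $N(H)$, a square face whose own midcube is a codimension--$2$ hyperplane $H\cap E$ of $\Psi$; the hyperplane $E$ of $\Psi$ is the one whose trace on $H$ is $F$. Under this identification the carrier $N_H(H\cap E)$ of $H\cap E$ inside $H$ agrees with $N_H(F)$, and the two halfspaces of $H$ bounded by $F$ are exactly the two subspaces $H\cap\vec E$ for the two choices of complementary component $\vec E$ of $E$ in $\Psi$. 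So the extremality of $F$ in $H$ says precisely that, for some halfspace $H\cap\vec E$ of $H$, we have $H\cap\vec E\subseteq N_H(H\cap E)$; that is, $H\cap E$ is extremal in $H$ in the sense of Definition~\ref{defn:extremal}.

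By the remark following Definition~\ref{defn:extremal}, once $H\cap E$ is extremal in $H$ the associated extremal panel $P(H^+_E)$ is a genuine panel of the block $\bur{H\cap E}$, hence an extremal panel of $\Psi$. This is exactly what the corollary asserts, so we are done. The main thing to get right is the bookkeeping in the translation step --- making sure that ``hyperplane of the cube complex $H$'' corresponds bijectively to ``hyperplane of $\Psi$ crossing $H$'', and that the carriers and halfspaces match up under this correspondence --- but this is standard for CAT(0) cube complexes (it is essentially the statement that $N(H)\cong H\times[-\half,\half]$ is cubical and that hyperplanes of the product are hyperplanes of a factor times an interval or a point times the interval). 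No genuine difficulty beyond this identification is anticipated; the geometric content is entirely carried by Lemma~\ref{lem:compact_extremal}.
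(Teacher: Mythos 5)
Your proposal is correct and follows the same approach as the paper, which simply cites Lemma~\ref{lem:compact_extremal} as the source of the corollary. You make explicit the translation between ``extremal hyperplane of the cube complex $H$'' (the phrasing in Lemma~\ref{lem:compact_extremal}) and ``$H\cap E$ is extremal in $H$'' (the phrasing in Definition~\ref{defn:extremal}), which the paper treats as immediate; this bookkeeping is correct, since hyperplanes of $H$ are exactly the traces $H\cap E$ of hyperplanes $E$ of $\Psi$ crossing $H$, and carriers and halfspaces correspond under $N(H)\cong H\times[-\half,\half]$.
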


\subsection{The no facing panels property}\label{sec:BIP}

\begin{defn}[No facing panels property]\label{defn:block_intersection_condition}
We say that $\Psi$ (respectively, $\Psi$ and a particular collection $\mathcal 
P$ of extremal panels) satisfies the \emph{no facing panels property} if the 
following holds.  Let $\bur{E\cap H}$ and $\bur{E^1\cap H^1}$ be distinct 
blocks so that $P=P(H^+_E)$ and $P^1((H^1)^+_{E^1})$ are extremal panels 
(respectively, in $\mathcal P$).  Suppose that $P\cap P^1=\emptyset$.  Then 
$\bur{E\cap H}$ and $\bur{E^1\cap H^1}$ do not have a common maximal cube.  
\end{defn}

\begin{rem}
 The maximal cubes of $\bur{E\cap H}$ have the form $f\times 
[-\frac12,\frac12]^2$, where $f$ is a maximal cube of $E\cap H$.  Hence, if 
$\bur{E\cap H},\bur{E^1\cap H^1}$ have a common maximal cube, it has the form 
$f\times  [-\frac12,\frac12]^2=f^1\times[-\frac12,\frac12]^2$, where $f\subset 
E\cap H,f^1\subset E^1\cap H^1$.  
\end{rem}

Recall that $G\leq\Aut(\Psi)$ acts \emph{without inversions} if, for all $g\in 
G$ and all hyperplanes $H$, if $gH=H$ then $g$ preserves both of the 
complementary components of $H$.

\begin{lem}\label{lem:block_intersection_condition}
Let $G\leq\Aut(\Psi)$ act without inversions across hyperplanes, let $P$ be an 
extremal panel, and let $\mathcal P=G\cdot P$.  Then $\mathcal P$ has the 
no facing panels property.
\end{lem}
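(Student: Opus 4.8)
The plan is to prove this by contradiction, using the maximality/extremality built into the definition of an extremal panel together with the fact that $G$ acts without inversions. Suppose $P = P(H^+_E)$ is an extremal panel, $\mathcal P = G\cdot P$, and the no facing panels property fails: there are $g, g' \in G$ so that the translated panels $gP = P((gH)^+_{gE})$ and $g'P = P((g'H)^+_{g'E})$ satisfy $gP \cap g'P = \emptyset$, yet the blocks $\bur{gE \cap gH}$ and $\bur{g'E \cap g'H}$ share a common maximal cube $c$. Replacing the pair $(gP, g'P)$ by $(P, g^{-1}g'P)$ (relabelling $g^{-1}g'$ as $g$), we may assume the two panels are $P = P(H^+_E)$ and $gP = P((gH)^+_{gE})$ for some $g\in G$, they are disjoint, and the blocks $\bur{E\cap H}$ and $\bur{gE\cap gH}$ have a common maximal cube $c$.

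First I would unpack what a common maximal cube means. By the Remark preceding the statement, $c = f\times[-\frac12,\frac12]^2 = f'\times[-\frac12,\frac12]^2$ where $f\subset E\cap H$ and $f'\subset gE\cap gH$ are maximal cubes; in particular the codimension-$2$ hyperplanes $E\cap H$ and $gE\cap gH$ both pass through the same maximal cube $c$, so (since each codimension-$2$ hyperplane meets a maximal cube in at most one ``midcube-square'') we get $E\cap H$ and $gE\cap gH$ are the \emph{same} codimension-$2$ hyperplane dual to $c$. Because a codimension-$2$ hyperplane is the intersection of a unique pair of codimension-$1$ hyperplanes, this forces $\{E,H\} = \{gE, gH\}$ as unordered pairs. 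There are then two cases: either $gE = E$ and $gH = H$, or $gE = H$ and $gH = E$.

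In the first case, $g$ stabilises both $H$ and $E$. Since $g$ acts without inversions, $g$ preserves each halfspace of $E$, in particular $gE^+ = E^+$, hence $g$ preserves the extremal side $H^+_E$ (the union of cubes of $H$ in $H\cap E^+$), and therefore $g$ preserves the extremal panel $P(H^+_E) = P$. But then $gP = P$, contradicting $gP\cap P = \emptyset$. In the second case, $g$ swaps $E$ and $H$; then $E\cap H$ is $g$-invariant and $g$ carries the panel $P = P(H^+_E)$ of $\bur{E\cap H}$ to the panel $gP = P((gH)^+_{gE}) = P(E^+_{H})$, which is \emph{another} panel of the \emph{same} block $\bur{E\cap H}$. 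Two panels of a single block either coincide or intersect in a codimension-$2$ or codimension-$1$ face — explicitly, the four panels $H\times[-\frac12,\frac12]\times\{\pm\frac12\}$ and $H\times\{\pm\frac12\}\times[-\frac12,\frac12]$ of $\bur{E\cap H} = (E\cap H)\times[-\frac12,\frac12]^2$ pairwise intersect unless they are the two ``parallel'' ones, i.e. $H\times[-\frac12,\frac12]\times\{+\frac12\}$ and $H\times[-\frac12,\frac12]\times\{-\frac12\}$ (or the other parallel pair). The panel $P(H^+_E)$ is abutted by $H$ and extremalised by $E$, so it is one of the two panels $H\times\{\pm\frac12\}\times[-\frac12,\frac12]$; likewise $P(E^+_H)$ is abutted by $E$ and extremalised by $H$, so it is one of $H\times[-\frac12,\frac12]\times\{\pm\frac12\}$. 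A panel from the first parallel class and a panel from the second parallel class always meet (their intersection contains $(E\cap H)\times\{(\pm\frac12,\pm\frac12)\}\neq\emptyset$). Hence $P\cap gP\neq\emptyset$, again contradicting disjointness. Either way we reach a contradiction, so $\mathcal P = G\cdot P$ satisfies the no facing panels property.

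The step I expect to be the main obstacle is the rigidity argument in the first paragraph of the case analysis: carefully justifying that a shared maximal cube of two blocks forces the two underlying pairs of codimension-$1$ hyperplanes to coincide. This uses that a maximal cube $c$ determines its dual codimension-$2$ hyperplane (if $c = f\times[-\frac12,\frac12]^2$ is the maximal cube of $\bur{E\cap H}$ containing the square $\{\mathrm{pt}\}\times[-\frac12,\frac12]^2$, the pair of midcubes of $c$ spanning that square are segments of $E$ and $H$), together with the standard fact that distinct codimension-$1$ hyperplanes through $c$ correspond to distinct parallel classes of edges of $c$, so the unordered pair $\{E,H\}$ is recovered from $c$ alone. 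Once that is in hand, everything else is the elementary combinatorics of the four panels of a block plus the no-inversions hypothesis.
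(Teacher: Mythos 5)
The reduction to the pair $(P, gP)$ is fine, and your two case analyses are internally correct, but the claim that gets you into those cases is false, so there is a genuine gap.

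You assert that, because the blocks $\bur{E\cap H}$ and $\bur{gE\cap gH}$ share a maximal cube $c$, the codimension--$2$ hyperplanes $E\cap H$ and $gE\cap gH$ must coincide, so that $\{E,H\}=\{gE,gH\}$. That is not what the Remark preceding the statement says, and it is not true. The Remark only records that a common maximal cube carries \emph{two} product decompositions $f\times[-\tfrac12,\tfrac12]^2 = f^1\times[-\tfrac12,\tfrac12]^2$ with $f\subset E\cap H$ and $f^1\subset E^1\cap H^1$; it does not assert that the two $[-\tfrac12,\tfrac12]^2$ factors correspond to the same pair of hyperplanes. A $d$--cube $c$ (with $d\ge 3$) contains $\binom{d}{2}$ distinct codimension--$2$ hyperplanes through its centre, and $c$ can perfectly well be a maximal cube of two blocks with disjoint hyperplane pairs. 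Concretely, in a single $3$--cube $c$ with midcubes $E,H,A$, both $E\cap H$ and $A\cap H$ are extremal in $H$ and both blocks $\bur{E\cap H}$ and $\bur{A\cap H}$ equal $c$ --- one shared maximal cube, two different pairs. So ``the unordered pair $\{E,H\}$ is recovered from $c$ alone'' is false, and this is exactly the step you flag as ``the main obstacle.''

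What \emph{does} nail down the hyperplane is the disjointness hypothesis $P\cap gP=\emptyset$, which you never actually use in the rigidity step. Because $c$ is a maximal cube of both blocks, $P\cap c$ and $gP\cap c$ are codimension--$1$ faces of $c$; disjointness forces them to be a pair of \emph{opposite} faces (any two codimension--$1$ faces of a cube that are not opposite must intersect). Opposite faces are parallel to the same hyperplane of $c$, and that hyperplane is $E$ from $P$'s point of view and $gE$ from $gP$'s point of view, so $gE=E$. This is precisely what the paper's proof does (via the parallel copy $\bar P$ and the unique hyperplane of $c$ not crossing the face $\bar c'$). Note also that the paper then stops immediately: since $P$ and $gP$ lie on opposite sides of $E=gE$, $g$ inverts $E$, contradiction. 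There is no need to determine $gH$. By contrast, your Case 1 genuinely requires $gH=H$ (to conclude $g$ preserves $H^+_E$ and hence $P$), and the scenario $gE=E$, $gH\neq H$ is not covered by either of your cases. That scenario is ruled out, but only by the inversion argument just described, not by your case analysis.

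To summarise: you identified the right crux (pin down the hyperplanes from the common maximal cube) but tried to extract too much from the cube alone. The lemma's hypotheses --- disjointness of the panels and absence of inversions --- are what do the work; disjointness gives $gE=E$, and no-inversions then turns $gE=E$ directly into a contradiction, with no need to locate $gH$.
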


\begin{proof}
Suppose that $\bur{E\cap H}$ and $\bur{E^1\cap H^1}$ have a common maximal cube 
$c$.  Let $\bar P,\bar P^1$ be parallel copies of $P,P^1$ in $\bur{E\cap 
H},\bur{E^1\cap H^1}$ respectively, so that $\bar P,P$ are parallel and 
separated by $E$ (and no other hyperplanes) and $\bar P^1,P^1$ are parallel and 
separated only by $E^1$.

If $P\cap {P^1}=\emptyset$, then $\bar P\cap P^1$ contains a codimension--$1$ 
face $\bar c'$ of $c$, whose opposite in $c$, denoted $c'$, lies in $P$.  The 
unique hyperplane separating $\bar c',c'$ is $E$.

By hypothesis, $P^1=gP$ for some $g\in G$, and extremality of $P^1$ implies 
that 
$g\bur{E\cap H}=\bur{E^1\cap H^1}$.  Hence either $gE=E^1$ and $gH=H^1$ or 
$gE=H^1$ and $gH=E^1$.  But $gH$ intersects $P^1$ since $H$ intersects $P$, 
while $E$ is disjoint from $P$, so $gE$ is disjoint from $P^1$.  Hence $gE=E^1$ 
and $gH=H^1$.  

Now, since $\bur{E^1\cap H^1}$ contains $c$ as a maximal cube, $E$ intersects 
$\bur{E^1\cap H^1}$.  On the other hand, $\bar c'$ is parallel to a subcomplex 
of $E$ (since 
$\bar c'$ is a cube of $\bar P$) while $\bar c'$ is also parallel into $E^1$ 
(since $\bar c'$ is a cube of $P^1$).  But, by the definition of an 
$H^1$--panel 
in $\bur{E^1\cap H^1}$, each maximal cube of $P^1$ (i.e. codimension--$1$ face 
of the maximal cube of the block) is parallel to a subcomplex 
of a unique hyperplane of 
$\bur{E^1\cap H^1}$, namely $E^1$.  Hence $E=E^1$, so $gE=E$.  But since $P$ 
and 
$gP=P^1$ are separated by $E$, the element $g$ acts as an inversion across $E$, 
a contradiction.
\end{proof}

\subsection{Motivating examples}
The original motivation for our main construction was the following:
knowing that compact CAT(0) cube complexes are collapsible to points,
how can one modify an ambient cube complex with compact hyperplanes
so as to realize a collapse of the hyperplanes, while keeping the ambient space a CAT(0) cube complex?

First consider a single $3$--cube $c$. If we were to collapse a free
face, i.e. to delete an open square in the boundary and the open
$3$--cube within $c$, we would have a union of 5 squares giving an
``open box'', which is not CAT(0). To preserve CAT(0)ness, we need to ``collapse
an entire panel'', as follows.

Let $H,E$ be two distinct hyperplanes in $c$ and pick a halfspace
$\vec E\cap H$. Then the intersection of $\vec E\cap H$ with the
$1$--skeleton of $c$ is a pair of $1$--cube midpoints. These open
$1$--cubes will be called internal and the maximal codimension--$1$
face containing all these is a panel. $H$ is the abutting hyperplane
and $E$ is the extremalising hyperplane. We obtain a \emph{deletion}
$\mathcal D(c)$ by removing all open cubes containing the internal $1$-cubes. This amounts to applying 
Theorem~\ref{thm:pasting} once,
in the case where $\Psi$ is the single cube $c$ and $\mathcal P$ consists of a single panel $P$.  Passage from $c$ to $\mathcal{D}(c)$ can be realised by a strong deformation retraction.  
See Figure~\ref{fig:single_taco}.

\begin{figure}[h]
\begin{overpic}[width=0.5\textwidth]{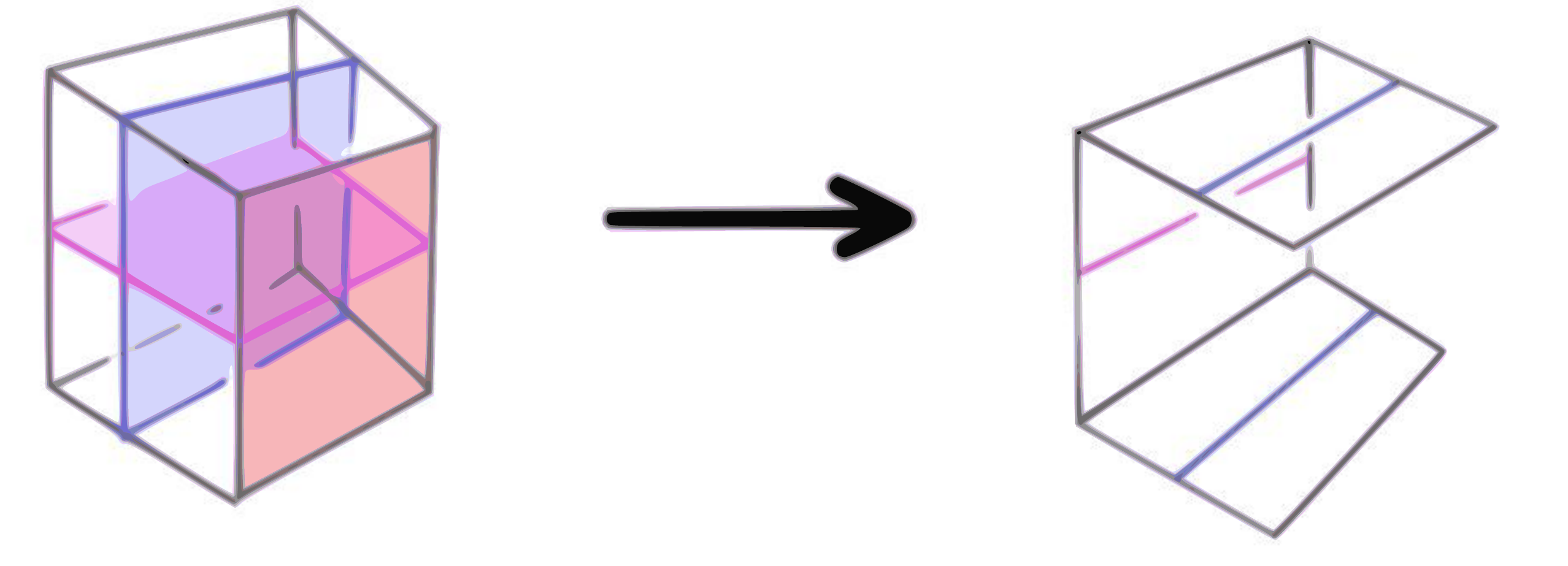}
\put(15,-3){$c$}
\put(75,-3){$\mathcal D(c)$}
\put(-2,20){$H$}
\put(5,4){$E$}
\put(20,10){$P$}
\end{overpic}
\caption{Collapsing the panel $P$ of the cube $c$ to obtain $\mathcal D(c)$.  The extremalising hyperplane is $E$ and the abutting hyperplane is $H$.}\label{fig:single_taco}
\end{figure}

The resulting cube complex is again CAT(0). By repeatedly choosing abutting and externalizing hyperplanes, and applying Theorem~\ref{thm:pasting} several times in succession, we
can collapse the cube down to a tree, as in Figure~\ref{fig:taco_sequence}. In this tree, the edge-midpoints
arise as intersections of this tree sitting inside $c$ and the
original hyperplanes of $c$.

\begin{figure}[h]
\includegraphics[width=0.65\textwidth]{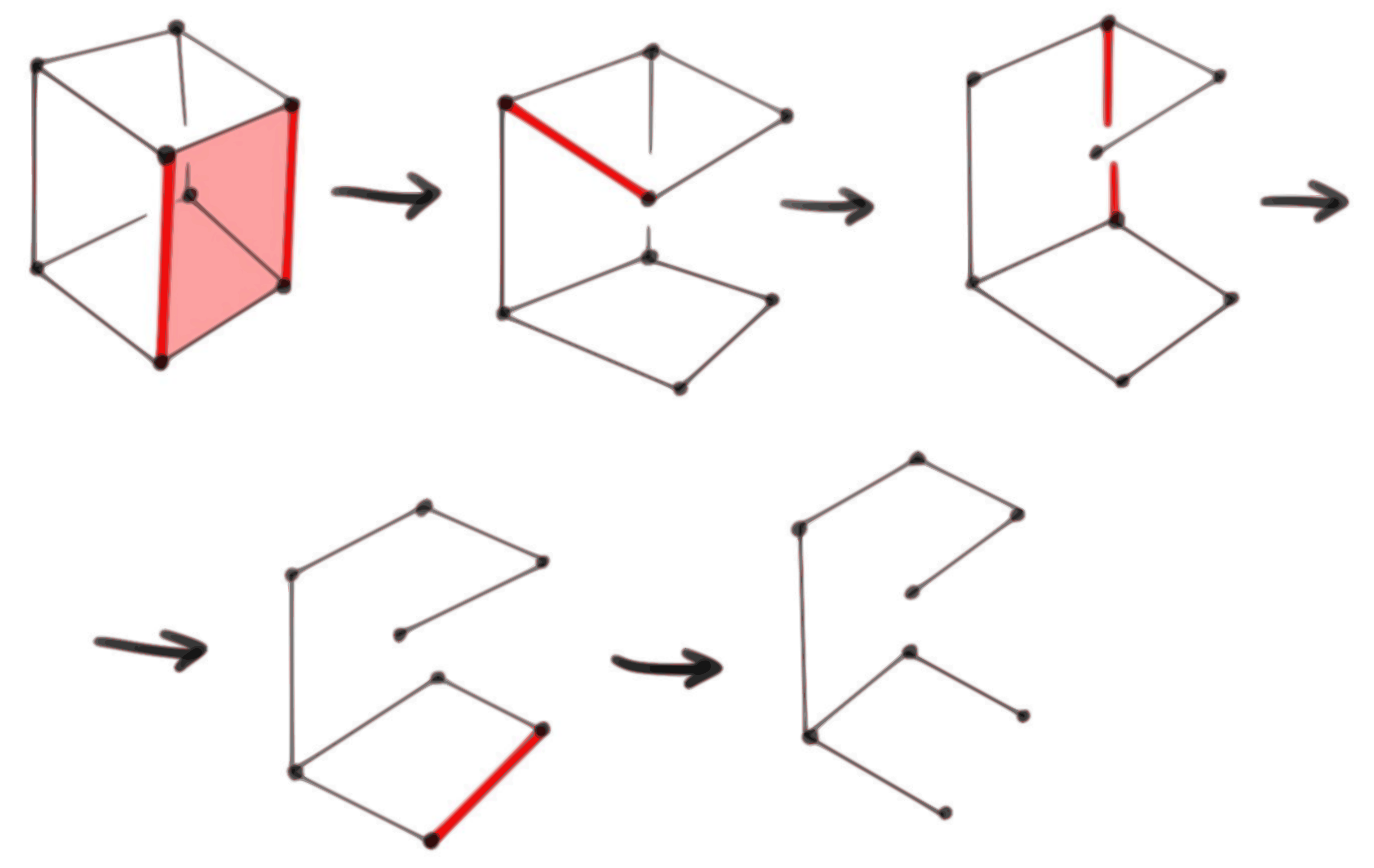}
\caption{Reducing $c$ to a tree via a sequence of panel collapses.  In this picture, at each stage a new panel (shaded) is chosen in the current subcomplex of $c$.  In all but the first step, the panel in question is a $1$--cube.  At the final stage, we have a subcomplex of $c$, so the midpoints of edges are contained in hyperplanes of $c$.  The bold $1$--cubes are dual to the abutting hyperplanes of the panels being collapsed.}\label{fig:taco_sequence}
\end{figure}

Now suppose we wanted to collapse many panels simultaneously (instead of by applying Theorem~\ref{thm:pasting} to a single panel, choosing a panel in the resulting complex, and iterating).  The most serious conflict between panels that could arise involves two panels intersecting a common cube in opposite faces, but this is ruled out by the requirement that we draw our panels from a collection with the no facing panels property.  The other issue involves two panels that intersect a common cube, which cannot be so easily hypothesised away.  This brings us to our second basic example, the situation where $\Phi$ is an infinite CAT(0) square complex with compact hyperplanes that admits a cocompact action by some group $G$. 

In this case, hyperplanes are trees
and panels are free faces of squares (corresponding to leaves of the
hyperplane-trees).  Although we will require that the action does not
invert hyperplanes (not a real restriction because one can subdivide), it may very well be that some element of $G$ maps
a square $c$ to itself, so that $c$ contains two distinct panels $P$ and
$gP$. By the no facing panels property, these panels must touch at some
corner of $c$. If we wish to $G$-equivariantly modify $\Phi$, then we
must collapse both $P$ and $gP$. The resulting $\mathcal D(c)$ obtained
by deleting all the open cubes of $c$ whose interiors lie in the
interior of $P$ or $gP$ is disconnected (one of the components is a $0$--cube). However, if $h(c)$ is the
$0$--cube of $c$ in $\mathcal D(c)$ diagonally opposite the isolated $0$--cube, then it can be connected to its
opposite $\bar h(c)=w$ by a diagonal $\mathcal S(\bar h(c))$.  The cubes $h(c),\bar h(c)$ are the \emph{persistent} and \emph{salient} cubes defined below (in general, they need not be $0$--cubes).  The resulting subspace $\mathcal F(c)$ replaces $c$ in the new complex, and we can make these replacements equivariantly.  See Figure~\ref{fig:double_taco}.

\begin{figure}[h]
\begin{overpic}[width=0.5\textwidth]{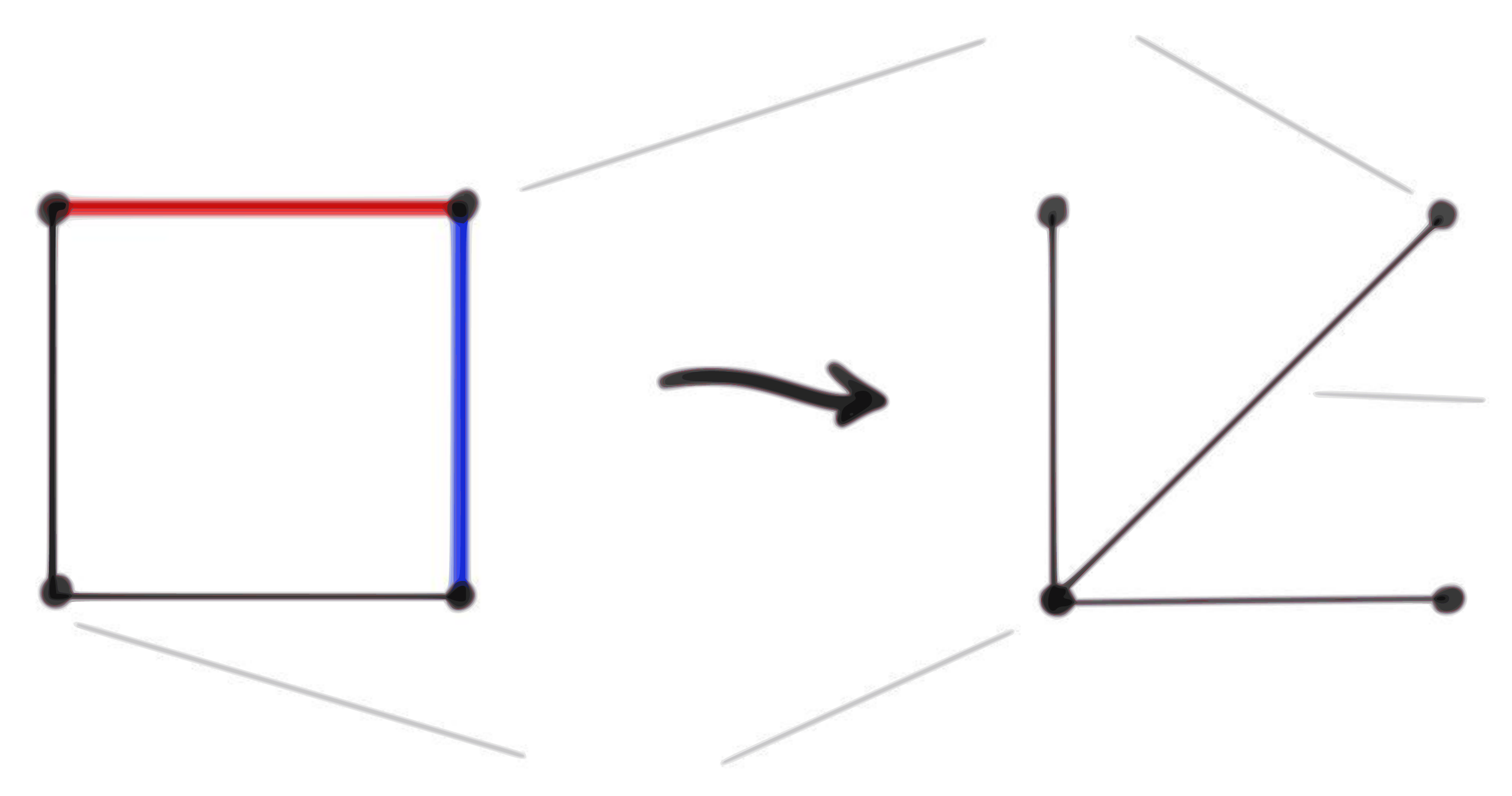}
\put(66,50){$\bar h(c)$}
\put(37,1){$h(c)$}
\put(100,25){$\mathcal S(\bar h(c))$}
\put(25,25){$P$}
\put(15,41){$gP$}
\end{overpic}
\caption{Passing from $c$ to $\mathcal F(c)$ when the given collection of panels contains two that conflict.  Note that 
$\mathcal F(c)$ is no longer a subcomplex of $c$, but it is a CAT(0) cube complex, embedded in $c$, whose $0$--skeleton is 
the same as that of $c$.}\label{fig:double_taco}
\end{figure}

The panel collapse construction, described formally in the next section, just generalises these procedures.

\section{Panel collapse for single cubes}\label{sec:cube_def_ret}
Let $\Psi$ be a finite-dimensional CAT(0) cube complex and let
$\mathcal P$ be a collection of extremal panels with the no facing
panels property.  Recall that the inside of $P$ is the union of open
cubes of $P$ that intersect the abutting hyperplane.  When we refer to
the \emph{interior} of a cube $[-\frac12,\frac12]^n$, we just mean
$(-\frac12,\frac12)^n$. If $c$ is a (closed) cube of $\Psi$ and
$P\in\mathcal P$, then $P\cap c$ is either $\emptyset$ or a sub-cube
of $c$, by convexity of $P$.

The first goal of this section is to define the \emph{fundament}
$\mathcal F(c)$ of a cube $c$, which is a subspace obtained from $c$
given the intersections $\{c\cap P\mid P \in \mathcal P \}$, which are
panels of $c$ that must be collapsed. This construction of
$\mathcal F(c)$ must be compatible with the induced collapses and
fundaments $\mathcal F(c')$ of the subcubes $c'$ of $c$. The second
goal of this section is to prove the existence of a strong deformation
retraction of $c$ onto $\mathcal F(c)$, compatible with deformation
retractions on the codimension--$1$ faces, and to show that
$\mathcal F(c)$ is again a CAT(0) cube complex.

\begin{lem}\label{lem:how_panels_intersect_cubes}
Let $c$ be a cube, let $P\in\mathcal P$, and let $e$ be a $1$--cube of $c$ that 
is internal to $P$.  Then $c$ has a codimension--$1$ face $f$ so that every 
$1$--cube of $f$ that is parallel to $e$ is internal to $P$.  
\end{lem}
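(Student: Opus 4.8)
The plan is to establish the slightly stronger statement that $P\cap c$ is a face of $c$ of codimension $0$ or $1$, from which the desired face $f$ can be read off directly.

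First I would untangle the product structures. Write $P=P(H^+_E)$, so that $P$ is the panel $(H\cap E)\times[-\frac12,\frac12]_H\times\{\frac12\}_E$ of the block $\bur{H\cap E}$ (Definition~\ref{defn:extremal}), where $[-\frac12,\frac12]_H$ is the factor dual to the abutting hyperplane $H$ and $\{\frac12\}_E$ is an endpoint of the factor dual to the extremalising hyperplane $E$. By Definition~\ref{defn:panel} the internal $1$-cubes of $P$ are exactly those spanning the $[-\frac12,\frac12]_H$ factor, i.e. the $1$-cubes of $P$ dual to $H$; this reformulation is the crux of the matter. So the hypothesis gives $e\subseteq P\cap c$ with $e$ dual to $H$, whence $c$ is dual to $H$. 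Using $N(H)=H\times[-\frac12,\frac12]_H$ (Definition~\ref{defn:carrier}), I write $c=\bar c\times[-\frac12,\frac12]_H$ with $\bar c=c\cap H$ regarded as a cube of $H$, and note that, directly from Definition~\ref{defn:extremal}, inside $N(H)$ one has $P=H^+_E\times[-\frac12,\frac12]_H$, where the extremal side $H^+_E$ is the combinatorial halfspace of $H$ bounded by the hyperplane $H\cap E$ of $H$ on the $\vec E$ side.

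Intersecting these two products over the common factor gives $P\cap c=(H^+_E\cap\bar c)\times[-\frac12,\frac12]_H$. Since $\bar c$ is a cube of $H$ and $H^+_E$ is a combinatorial halfspace of $H$, a three-way case split — according to whether $\bar c$ crosses $H\cap E$ and, if not, on which side of $H\cap E$ it lies — shows that $H^+_E\cap\bar c$ is empty, all of $\bar c$, or a single codimension--$1$ face of $\bar c$. As $e\subseteq P\cap c$ rules out the empty case, $P\cap c$ is either $c$ or a codimension--$1$ face of $c$. Now I take $f:=P\cap c$ in the first case, and $f:=$ any codimension--$1$ face of $c$ (which exists since $\dim c\geq 1$) in the second. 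In both cases $f\subseteq P$, so every $1$-cube $e'$ of $f$ parallel to $e$ is a $1$-cube of $P$ dual to $H$, hence internal to $P$ by the reformulation above — which is exactly what the lemma asserts.

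I do not anticipate a genuine obstacle. The only slightly fiddly steps are checking that the panel factor carrying the internal $1$-cubes is the one dual to the abutting hyperplane $H$ (a matter of carefully lining up the decompositions of $N(H)$, the block $\bur{H\cap E}$, and $P$), and the elementary fact that a cube of a CAT(0) cube complex meets a combinatorial halfspace in a face of codimension at most $1$, which is the three-way case split used above.
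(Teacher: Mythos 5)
Your argument is correct and, despite working in the carrier $N(H)$ rather than the block $\bur{E\cap H}$ as the paper does, the case split is the same in substance: your ``$\bar c$ crosses $H\cap E$'' is the paper's ``$E$ crosses $c$'', and your ``$\bar c\subset H^+_E$'' is the paper's ``the interior of $c$ lies in the inside of $P$''. The only flaw is a transposition of case labels in the choice of $f$: when $P\cap c=c$ (the first case you list) you must take $f$ to be an arbitrary codimension--$1$ face of $c$ --- not $f:=P\cap c$, which has codimension $0$ --- while when $P\cap c$ is already a codimension--$1$ face (your second case) you take $f:=P\cap c$; your concluding sentence ``in both cases $f\subseteq P$'' already tracks the corrected assignment, so this is a slip in exposition rather than in reasoning.
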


In other words, if $c$ is a cube and $P\in\mathcal P$, then exactly one of the 
following holds:
\begin{itemize}
     \item $c\cap P=\emptyset$;
     \item the interior of $c$ is contained in the inside of $P$;
     \item $c\cap P$ is a codimension--$1$ face $f$ of $c$, and moreover, if 
$e$ is an edge of $f$ dual to the abutting hyperplane of $P$, then any edge of 
$f$ parallel to $e$ is in the inside of $P$.
\end{itemize}

\begin{proof}
Let $E,H$ be hyperplanes so that $\bur P=\bur{E\cap 
H}$, with $H$ dual to the internal $1$--cubes of $P$, so $e$ is dual to $H$.  
By extremality of $P$, we must have that $c\subset\bur P$.  If the interior of 
$c$ is contained in the inside of $P$, we're done.  Otherwise, $E\cap 
c\neq\emptyset$, and $c$ has two codimension--$1$ faces, $f,f'$, that are 
separated by $E$, with $f\subset P$.  Thus every $1$--cube of $f$ dual to $H$ 
(i.e. parallel to $e$) is internal to $P$, as required.
\end{proof}

The next lemma is immediate from the definition of a panel:

\begin{lem}\label{lem:kill_whole_class}
Let $c$ be a cube of $\Psi$.  If the interior of $c$ lies in the inside of 
some $P\in\mathcal P$, then there is some $1$--cube $e$ of $c$ such that $e'$ is 
internal for $P$ for all $1$--cubes $e'$ of $c$ parallel to $e$.

Conversely, if $c$ is a cube, and for some $1$--cube $e$ of $c$, every 
$1$--cube parallel to $e$ is internal to $P$, then the interior of $c$ is 
contained in the inside of $P$.
\end{lem}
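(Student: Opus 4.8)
The statement has two directions. The plan is to work entirely inside the single cube $c$, which we identify with $[-\frac12,\frac12]^n$ equipped with its parallelism classes of $1$--cubes (one class per coordinate direction), and with the hyperplanes $E,H$ of $\Psi$ satisfying $\bur P = \bur{E\cap H}$, where $H$ is the abutting hyperplane (dual to the internal $1$--cubes of $P$) and $E$ the extremalising hyperplane. The key structural fact to exploit is Definition~\ref{defn:panel}: a $1$--cube $e$ of the panel $P = H\times[-\frac12,\frac12]\times\{\frac12\}$ is internal precisely when it is not contained in $H\times\{\pm\frac12\}\times\{\frac12\}$, i.e.\ precisely when $e$ is dual to $H$ rather than to $E$ or to some hyperplane crossing $H\cap E$. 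Thus ``$e$ internal to $P$'' is equivalent to ``$e\subset P$ and $e$ is dual to $H$''.

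For the forward direction: suppose the interior of $c$ lies in the inside of $P$. Since the inside $\interior{P} = H\times(-\frac12,\frac12)\times\{\frac12\}$ meets the interiors of exactly those cubes of $\bur P$ that straddle $H$ but not $E$, the fact that $\interior{c}\subset\interior{P}$ forces $c\subset P$, with $H$ crossing $c$ and $E$ disjoint from $c$ (otherwise $c$ would have a codimension--$1$ face separated off by $E$, whose interior cannot lie in $\interior{P}$). Now take $e$ to be any $1$--cube of $c$ dual to $H$; such an $e$ exists because $H$ crosses $c$. Every $1$--cube $e'$ parallel to $e$ is also dual to $H$ and lies in $c\subset P$, hence is not contained in $H\times\{\pm\frac12\}\times\{\frac12\}$ (that subcomplex contains no $H$--dual edges), so $e'$ is internal to $P$. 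That is exactly the conclusion.

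For the converse: suppose $e$ is a $1$--cube of $c$ with every parallel $1$--cube $e'$ internal to $P$. Then every such $e'$ lies in $P$ and is dual to $H$; in particular $e$ itself is dual to $H$, so $H$ crosses $c$. I claim $c\subset\bur P$ and $E$ does not cross $c$, whence $\interior c\subset\interior P$. For the first: by extremality of $P$ (Lemma~\ref{lem:how_panels_intersect_cubes}, or directly from the fact that $e\subset P\subset\bur P$ together with convexity of $\bur P$ and the observation that $c$ is spanned by a vertex of $e$ together with edges in the other coordinate directions, all of which must stay in $\bur P$ since $\bur{E\cap H}$ is a carrier), we get $c\subset\bur P$. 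For the second: if $E$ crossed $c$, then $c$ would have two parallel faces $f,f'$ separated by $E$ with $f\subset P$; the $1$--cubes of $c$ dual to $H$ that lie in $f'$ are then parallel to $e$ but are \emph{not} in $P$ (they lie on the far side of $E$), contradicting the hypothesis that every $1$--cube parallel to $e$ is internal to $P$. Hence $E$ misses $c$, and combined with $c\subset\bur P$ and $H$ crossing $c$ this gives $\interior c\subset H\times(-\frac12,\frac12)\times\{\frac12\} = \interior P$, as desired.

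The one point requiring a little care — the ``main obstacle'' — is the direction ``every parallel edge internal $\Rightarrow$ $c\subset\bur P$'': one must make sure that controlling a single parallelism class of edges is enough to trap all of $c$ inside the carrier $\bur{E\cap H}$. This is where we use that a carrier is convex and that any cube $c$ of $\Psi$ is the convex hull of a single edge of $c$ together with the edges issuing from one of its endpoints in the remaining directions; since the chosen edge $e$ lies in $P\subset\bur P$ and $\bur P$ is convex, and since $\bur{E\cap H}$ is closed under passing between parallel edges in its product structure, one concludes $c\subset\bur P$. Everything else is a direct unwinding of Definition~\ref{defn:panel} and the product decomposition $\bur{E\cap H} = (E\cap H)\times[-\frac12,\frac12]^2$.
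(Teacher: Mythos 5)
Your proof is correct. The paper gives no argument for this lemma at all---it is recorded as immediate from the definition of a panel---so your write-up is just a careful unwinding of Definition~\ref{defn:panel}, and both directions go through. Two remarks on the converse, which is where the content lies. First, the load-bearing step, as you yourself flag, is getting $c\subset\bur{P}$ from internality of edges; your primary justification (extremality, as in the proof of Lemma~\ref{lem:how_panels_intersect_cubes}) is the right one, but your ``direct'' alternative is not valid as phrased: the edges issuing from a vertex of $e$ in the other coordinate directions do not stay in $\bur{E\cap H}$ merely because the block is a carrier and is convex---that containment is precisely where extremality of $E\cap H$ in $H$ is used. Second, the converse can be shortened so that extremality is not needed at all: since every $1$--cube of $c$ parallel to $e$ is internal, each such edge lies in $P$, hence every $0$--cube of $c$ lies in $P$; as $P$ is a convex subcomplex (Definition~\ref{defn:panel}) it is full, so $c\subset P$, and since $e$ is dual to the abutting hyperplane $H$ (so $H$ crosses $c$) while $E$ is disjoint from $P$, the interior of $c$ lies in the inside of $P$. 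Your route through the block, with the separate case analysis ruling out $E$ crossing $c$, reaches the same conclusion and is fine.
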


From Lemma~\ref{lem:kill_whole_class} and convexity of panels, we obtain:

\begin{lem}\label{lem:convex_panel_consequence}
Let $c$ be a cube and let $P\in\mathcal P$.  Let $e,e'$ be $1$--cubes of $c$ 
that are internal to $P$.  Suppose that $e,e'$ do not lie in a common proper 
sub-cube of $c$.  Then the interior of $c$ is contained in the inside of $P$.

Hence, if the interior of $c$ is not contained in the inside of $P$, then all 
of the $1$--cubes of $c$ internal to $P$ are contained in a common 
codimension--$1$ face of $c$.
\end{lem}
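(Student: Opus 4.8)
The plan is to reduce Lemma~\ref{lem:convex_panel_consequence} to the previous two lemmas by a pigeonhole-type argument on parallelism classes of $1$--cubes, using only convexity of the panel $P$ and the structural dichotomy of Lemma~\ref{lem:how_panels_intersect_cubes}. Recall that a cube $c=[-\frac12,\frac12]^n$ has exactly $n$ parallelism classes of $1$--cubes, one per coordinate direction, and any proper subcube of $c$ omits at least one direction. So the hypothesis that $e,e'$ do not lie in a common proper subcube of $c$ should force $e$ and $e'$ to ``span all directions'' only when $n=2$; for $n>2$ this hypothesis as literally stated cannot hold for two single $1$--cubes. Hence the first thing I would do is reinterpret: the real content is that the internal-to-$P$ edges of $c$, taken together, are not contained in any proper subcube, and two such edges $e,e'$ that are non-parallel and do not co-bound a square already witness this in the $n=2$ case. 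I would state and prove the $n=2$ case first, then bootstrap.

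Second, I would handle the key case directly. Suppose the interior of $c$ is \emph{not} contained in the inside of $P$. By Lemma~\ref{lem:how_panels_intersect_cubes} (in its ``exactly one of three'' reformulation), we are in the third bullet: $c\cap P$ is a codimension--$1$ face $f$ of $c$, and every edge of $f$ parallel to the abutting direction $H$ lies inside $P$. Now let $e$ be any $1$--cube of $c$ internal to $P$. Internal edges lie in $P=c\cap P=f$, so $e\subset f$. Thus \emph{all} internal $1$--cubes of $c$ lie in the single codimension--$1$ face $f$. This already proves the ``Hence'' clause. For the main clause: if moreover $e,e'$ are internal and do not lie in a common proper subcube of $c$, then since both lie in $f$, which is itself a proper subcube (codimension--$1$, so $n\geq 2$), we already have $e,e'$ in the common proper subcube $f$ — contradiction. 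Therefore the interior of $c$ must lie in the inside of $P$, as claimed.

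Third, I should double-check the edge case $n\le 1$ and the degenerate possibility $c\cap P=\emptyset$: if $c$ has dimension $0$ there are no $1$--cubes and the hypothesis is vacuous; if $c\cap P=\emptyset$ then there are no internal edges, again vacuous; and if the interior of $c$ is contained in the inside of $P$ there is nothing to prove. So the only substantive case is precisely the third bullet of Lemma~\ref{lem:how_panels_intersect_cubes}, which is exactly what I exploited. The role of convexity of $P$ is already packaged into Lemma~\ref{lem:how_panels_intersect_cubes} (via extremality forcing $c\subset\bur P$) and into the fact that $c\cap P$ is a subcube, so I would cite that rather than re-deriving it.

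The main obstacle I anticipate is purely expository: making sure the phrase ``do not lie in a common proper sub-cube'' is read correctly, since for it to be satisfiable one needs the two edges to be genuinely ``spread out'' — and the cleanest way to make the argument airtight is to observe that any codimension--$1$ face is a proper subcube, so placing both internal edges inside such a face immediately contradicts the hypothesis. If instead one wanted a proof avoiding Lemma~\ref{lem:how_panels_intersect_cubes} and going back to the definition, the hard step would be showing convexity of $P\cap c$ forces the internal edges into a single face; but since that is already available, I would not pursue it. I would write the proof in four or five lines along the lines above.

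\begin{proof}
If $c$ has dimension $0$, or $c\cap P=\emptyset$, or the interior of $c$ is contained in the inside of $P$, the statement is vacuous or trivial. So assume the interior of $c$ is not contained in the inside of $P$ and that $c\cap P\neq\emptyset$. By Lemma~\ref{lem:how_panels_intersect_cubes}, $c\cap P$ is a codimension--$1$ face $f$ of $c$. Every $1$--cube of $c$ that is internal to $P$ lies in $P$, hence in $c\cap P=f$; this proves the final (``Hence'') assertion. Now if $e,e'$ are $1$--cubes of $c$ internal to $P$, then both lie in $f$, which is a proper sub-cube of $c$. Thus $e,e'$ lie in a common proper sub-cube of $c$, contrary to hypothesis. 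Therefore the interior of $c$ must in fact lie in the inside of $P$.
\end{proof}
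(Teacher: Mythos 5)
Your final displayed proof is correct, and in substance it is the argument the paper intends: the paper supplies no explicit proof, merely stating that the lemma follows ``from Lemma~\ref{lem:kill_whole_class} and convexity of panels,'' while you route through the trichotomy stated after Lemma~\ref{lem:how_panels_intersect_cubes}. That trichotomy packages exactly the two facts the paper points to — convexity of $P$ makes $c\cap P$ a subcube, and extremality confines $c$ to the block $\bur{P}$ — so there is no real divergence in method, only in which intermediate statement you choose to quote. One small caution: the trichotomy as printed is phrased for arbitrary $(c,P)$, but its proof, and in particular the claim that in the third case $c\cap P$ is a codimension--$1$ face, genuinely uses the presence of an internal $1$--cube in $c$ (a cube meeting $P$ only in a corner would violate it). In your application this is harmless because the hypothesis hands you the internal edge $e$, but it is worth being aware that you are using the trichotomy under the same standing assumption as Lemma~\ref{lem:how_panels_intersect_cubes} itself.

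Separately, your preliminary discussion contains a factual error that, fortunately, the proof below it does not rely on. You claim that for $\dimension c>2$ the hypothesis ``$e,e'$ do not lie in a common proper sub-cube'' cannot hold for two single $1$--cubes. This is false: in a $3$--cube take $e$ dual to the first coordinate with the other two coordinates both $+\frac12$, and $e'$ dual to the first coordinate with the other two coordinates both $-\frac12$; their convex hull is the entire $3$--cube, so no proper sub-cube contains both. (Note that two $1$--cubes both internal to a fixed panel $P$ are necessarily parallel, being dual to the abutting hyperplane, so the parallel case is the relevant one, and it is realizable.) Likewise, in the $n=2$ case \emph{any} two distinct edges of a square have the whole square as their convex hull, so the ``non-parallel, do not co-bound a square'' caveat you introduce is spurious. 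The lemma as written is neither vacuous in high dimension nor in need of reinterpretation; it is good that your actual proof treats it as stated rather than as ``reinterpreted.''
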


\begin{defn}[Internal, external, 
completely external]\label{defn:internal_external}
A subcube $c$ of $\Psi$ is \emph{internal} [resp., internal to $P\in\mathcal P$] 
if its interior lies in the inside of 
some panel in $\mathcal P$ [resp., $P$].  Otherwise, $c$ is \emph{external}.  
If $c$ 
contains no $1$--cube that is internal, then $c$ is 
\emph{completely external}.  Completely external cubes are external, but 
external cubes are not in general completely external.  Specifically, $c$ is 
external but not completely external exactly when $c$ has a codimension--$1$ 
face that is internal.  (The notions of 
externality and complete externality coincide for $1$--cubes, and every 
$0$--cube is completely external.)
\end{defn}

The following two lemmas will be important later in the section, but we state them here since they are just 
about the definition of an external cube.

\begin{lem}\label{claim:opposite}
Let $f$ be a cube which is the product of external cubes.  Then $f$ is external.
\end{lem}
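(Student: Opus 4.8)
The plan is to argue by contraposition: assume $f$ is internal and deduce that one of its factors is internal, contradicting the hypothesis. So suppose the interior of $f$ lies in the inside of some panel $P\in\mathcal P$. By the first part of Lemma~\ref{lem:kill_whole_class}, there is a $1$--cube $e$ of $f$ with the property that every $1$--cube of $f$ parallel to $e$ is internal to $P$.

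Write $f=c_1\times\cdots\times c_k$, where each $c_i$ is a face of $f$ and is external. Recalling that a $1$--cube of $\Psi$ is dual to a well-defined hyperplane of $\Psi$, independent of which ambient cube we view it in, the parallelism classes of $1$--cubes of $f$ are exactly the coordinate directions of $f$, and these are partitioned according to the factors: each such class ``comes from'' a unique factor $c_j$. Let $c_j$ be the factor whose directions include that of $e$, regarded as the face $\{v_1\}\times\cdots\times c_j\times\cdots\times\{v_k\}$ of $f$ for some choice of base $0$--cube in each other factor. Every $1$--cube of $c_j$ parallel to $e$ is then a $1$--cube of $f$ parallel to $e$ (parallelism of $1$--cubes is intrinsic, so ``parallel within $c_j$'' and ``parallel within $f$'' agree here), hence internal to $P$.

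Now apply the converse direction of Lemma~\ref{lem:kill_whole_class}, this time to the cube $c_j$ and its $1$--cube $e$: since every $1$--cube of $c_j$ parallel to $e$ is internal to $P$, the interior of $c_j$ lies in the inside of $P$, so $c_j$ is internal. This contradicts the assumption that $c_j$ is external. (The degenerate case in which $f$ is a $0$--cube is immediate, since every $0$--cube is completely external.) I do not anticipate a genuine obstacle; the only point requiring care is the bookkeeping in the previous paragraph, namely that the product decomposition sorts the parallelism classes of $1$--cubes of $f$ by factor and that parallelism of $1$--cubes does not depend on the ambient cube, both of which are standard facts about products of cubes in CAT(0) cube complexes.
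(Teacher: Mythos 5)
Your proof is correct and follows essentially the same route as the paper: write $f$ as a product, use the forward direction of Lemma~\ref{lem:kill_whole_class} to get a parallelism class of $1$--cubes all internal to one panel $P$, observe this class lies in a single factor, and use the converse direction to conclude that factor is internal, a contradiction. The only cosmetic point is that $e$ itself may not lie in your chosen face representing $c_j$, but a parallel $1$--cube of $c_j$ does and the same argument applies, as your remark on intrinsic parallelism already covers.
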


\begin{proof}
Write $f=f'\times e$, where $f',e$ are external cubes.

If $f$ is not external, then there exists a $1$--cube $e'$ such that every $1$--cube of $f$ parallel to $e'$ 
is internal.  
Since $e$ is external, $e'$ cannot be parallel to $e$.  Hence $e'$ is parallel to a $1$--cube of $f'$.  But 
since $f'$ is 
external, every parallelism class of $1$--cubes represented in $f'$ has an external representative in $f'$.  
This proves 
that $f$ is external.
\end{proof}

\begin{lem}\label{lem:external_cube_intersections}
  The intersection $c \cap c'$ of two external cubes $c,c'$ is
  external.
\end{lem}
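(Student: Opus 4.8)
The plan is to reduce to Lemma~\ref{claim:opposite} by exhibiting $c\cap c'$ as (isomorphic to) a product of external cubes, or else to argue directly by contradiction using the characterization of external cubes in terms of parallelism classes of $1$--cubes. Let me take the contradiction route, since it interacts most cleanly with the definition. Suppose $c\cap c'$ is internal. Then by Lemma~\ref{lem:kill_whole_class} there is a panel $P\in\mathcal P$ and a $1$--cube $e$ of $c\cap c'$ such that every $1$--cube of $c\cap c'$ parallel to $e$ is internal to $P$; moreover, by Lemma~\ref{lem:kill_whole_class} again (the converse direction) this is equivalent to saying the interior of $c\cap c'$ lies in the inside of $P$. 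Here I am using that $c\cap c'$ is itself a (possibly empty, in which case there is nothing to prove) subcube of $\Psi$, by convexity of cubes, so the notions of internal/external from Definition~\ref{defn:internal_external} apply to it.

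The first real step is to promote the internality of $c\cap c'$ to internality of $c$ (and symmetrically $c'$), contradicting the hypothesis. The edge $e$ is a $1$--cube of $c$; let $H$ be the hyperplane dual to $e$, so $H$ is one of the two hyperplanes $E,H$ with $\bur P=\bur{E\cap H}$, with $H$ dual to the internal $1$--cubes of $P$. By Lemma~\ref{lem:how_panels_intersect_cubes} applied to $c$ and $P$ (noting $e$ is a $1$--cube of $c$ internal to $P$, since it is internal to $P$ inside $c\cap c'\subseteq c$), exactly one of the three listed alternatives holds for $c$: either $c\cap P=\emptyset$, or the interior of $c$ lies in the inside of $P$, or $c\cap P$ is the codimension--$1$ face $f$ of $c$ dual to $E$ with every $1$--cube of $f$ parallel to $e$ internal to $P$. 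The first alternative is impossible since $e\subset c\cap P$. If the second holds, then $c$ is internal, contradiction. So we are in the third case, and the same analysis applies to $c'$: $c'\cap P$ is the codimension--$1$ face $f'$ of $c'$ dual to $E$.

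Now the main obstacle is to derive a contradiction from the third-case configuration for both $c$ and $c'$. The point is that $e$ lies in $c\cap c'\subseteq c\cap P = f$, so $e$ lies in the $E$-side of $c$, and likewise $e$ lies on the $E$-side of $c'$; in particular $E$ does not cross $e$. But $e$ was an arbitrary $1$--cube of $c\cap c'$ parallel to the dual of $H$ that witnesses internality — the issue is whether $E$ crosses $c\cap c'$ at all. If $E$ does not cross $c\cap c'$, then $c\cap c'$ lies entirely on the $E$-side, i.e. $c\cap c'\subseteq \bur P$ with interior on the $H$-side only a subset; then in fact every $1$--cube of $c\cap c'$ dual to $H$ is internal (it lies in $c\cap P$), and one checks using Lemma~\ref{lem:kill_whole_class} that this forces $c$ to have the same property — more carefully, since $c\cap P = f$ is a codimension--$1$ face of $c$ and $c\cap c'\subseteq f$, while $c$ is external, $c$ must have a $1$--cube dual to $H$ lying outside $f$, hence external; such a $1$--cube together with an internal one dual to $H$ in $f$ need not lie in a common proper subface of $c$, so Lemma~\ref{lem:convex_panel_consequence} gives a contradiction. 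If instead $E$ does cross $c\cap c'$, then $c\cap c'$ has a $1$--cube $\epsilon$ dual to $E$, which cannot be parallel to $e$ (as $e$ is dual to $H\neq E$); then by Lemma~\ref{claim:opposite}-type reasoning, writing $c\cap c'$ as a product with an $\epsilon$-factor, the internality of $c\cap c'$ would have to be witnessed in the complementary factor, and pushing this back up through $c$ via the product structure of $\bur P = (E\cap H)\times[-\half,\half]^2$ again contradicts externality of $c$. Either way we contradict the hypothesis, so $c\cap c'$ is external. I expect the bookkeeping in distinguishing whether $E$ crosses $c\cap c'$, and correctly invoking Lemma~\ref{lem:convex_panel_consequence} versus Lemma~\ref{claim:opposite}, to be the delicate part; the cleanest writeup may well just observe that $c\cap c'$, being a subcube of $c$, inherits a parallelism class of $1$--cubes with an external representative from $c$ (since $c$ is external) provided that class is represented in $c\cap c'$ at all, and then handle the parallelism classes not represented in $c$ via the symmetric argument with $c'$ — but I would need to check that every parallelism class of $1$--cubes of $c\cap c'$ is represented in $c$ or $c'$, which is immediate since $c\cap c'\subseteq c$.
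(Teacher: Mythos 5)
Your setup is correct and matches the paper's: you reduce to a contradiction, identify the panel $P$ with abutting hyperplane $H$ and extremaliser $E$ so that $c\cap c'$ is internal to $P$, and use Lemma~\ref{lem:how_panels_intersect_cubes} to show that both $c\cap P$ and $c'\cap P$ are codimension--$1$ faces cut off by $E$ (in particular $E$ crosses both $c$ and $c'$). You also correctly observe that $c\cap c'\subseteq c\cap P=f\subseteq P$. But the proof does not close from there, and the case analysis that follows has real problems.

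In your first case (``$E$ does not cross $c\cap c'$''), the appeal to Lemma~\ref{lem:convex_panel_consequence} is invalid: that lemma requires \emph{both} $1$--cubes to be internal to $P$, whereas you have paired an internal $1$--cube in $f$ with a $1$--cube outside $f$ that you are claiming is external. Such a pair gives no contradiction; indeed an external cube $c$ that meets the inside of $P$ always has internal $H$--dual $1$--cubes in $f$ and external $H$--dual $1$--cubes outside $f$, so nothing is wrong with that picture. Your second case (``$E$ does cross $c\cap c'$'') is actually vacuous, because you have already shown $c\cap c'\subseteq P$ and $E\cap P=\emptyset$; the hand-waved argument there cannot be salvaged because the hypothesis is contradictory. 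Finally, the ``cleanest writeup'' you sketch at the end is circular: the whole difficulty is precisely that the external representative of a parallelism class of $1$--cubes of $c$ need not lie in $c\cap c'$, and the observation that every class is ``represented in $c$ or $c'$'' does not address that.

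The missing idea is short but essential, and it is what makes the paper's proof work. Since $E$ crosses both $c$ and $c'$, every $0$--cube of $c$ (resp.\ $c'$) is incident to a $1$--cube of $c$ (resp.\ $c'$) dual to $E$. Now take any $0$--cube $v$ of $c\cap c'$. In a CAT(0) cube complex, the $1$--cube at $v$ dual to a given hyperplane is unique, so the $E$--dual $1$--cube at $v$ inside $c$ coincides with the $E$--dual $1$--cube at $v$ inside $c'$; call it $e$. Then $e\subseteq c\cap c'$. But $c\cap c'\subseteq P$ and $E$ does not cross $P$, so $c\cap c'$ has no $1$--cube dual to $E$ --- contradiction. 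That single observation replaces both of your cases.
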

\begin{proof}
  If $c' \subset c$, this is obviously true. If $c\cap c'$ is a
  0--cube, then this holds since all 0--cubes are external. Suppose
  now towards a contradiction, that $c \cap c'$ is an internal cube of
  dimension at least 1. Let $P\in \mathcal P$ be a panel containing
  the interior of $c \cap c'$. Let $H$ be the abutting hyperplane and
  let $E$ be the extremaliser of $P$. Since $c \cap c'$ is internal to
  $P$ it cannot contain a 1--cube dual to $E$. The hyperplanes $E$ and
  $H$ cross $c$, and by extremality $c'$ must lie in the carrier
  $\mathcal N(E)$. Since $c'$ is not contained in $P$, then
  $c' \cap P$ is a codimension--1 face of $c'$. Hence every 0--cube of
  $c'$ is contained in a 1--cube dual to $E$. In particular, since
  $c\cap c'$ contains a 0--cube $v$, the 1--cube $e$ containing $v$
  and dual to $E$ lies in $c \cap c'$, contradicting that $c \cap c'$
  is internal.
\end{proof}

\begin{rem}\label{rem:codim_1_external}
In several places, we will use the following consequence of Lemma~\ref{lem:how_panels_intersect_cubes}.  Let $e,e'$ be 
$1$--cubes of a cube $c$ that are parallel (i.e. dual to the same hyperplane $D$).  Suppose that every hyperplane of $c$ 
other than $D$ separates $e,e'$.  This is equivalent to the assertion that for each codimension--$1$ face $c'$ of $c$, at 
most one of $e,e'$ is contained in $c'$.  In this situation, if $e,e'$ are both external, then every $1$--cube $e''$ dual to 
$D$ must also be external.  Indeed, suppose that $e''$ is an internal $1$--cube dual to $D$.  Then there is a panel 
$P\in\mathcal P$ such that $e''$ is internal to $P$ and $e''$ lies in some codimension--$1$ face $f$ such that every 
$1$--cube of $f$ parallel to $e''$ is internal.  Now, either $e$ or $e''$ must lie in $f$, for otherwise both $e,e''$ must 
be contained in the codimension--$1$ face parallel to, and disjoint from, $f$.  But this is impossible, and hence the 
externality of $e$ or $e'$ is contradicted by the internality of $e''$.
\end{rem}

So far we have not used that $\mathcal P$ has the no facing panels 
property; now it is crucial.

\begin{lem}\label{lem:no_facing}
Let $c$ be a cube of $\Psi$.  Let $f,f'$ be a pair of parallel codimension--$1$ 
faces of $c$ so that there are panels $P,P'\in\mathcal P$ with the interiors of 
$f,f'$ respectively contained in $P,P'$.  Then $c$ is internal.
\end{lem}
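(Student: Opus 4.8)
The plan is to argue by contradiction: assume $c$ is external and produce a configuration ruled out by the no facing panels property. Write $H,E$ for the abutting and extremalising hyperplanes of $P$, so that $\bur{P}=\bur{E\cap H}$, and let $H',E'$ play the analogous roles for $P'$. Since $f$ is internal to $P$ while, by assumption, $c$ is not internal to $P$, the trichotomy recorded after Lemma~\ref{lem:how_panels_intersect_cubes} forces $c\cap P$ to be a codimension--$1$ face of $c$; as $f\subseteq c\cap P$ and both are codimension--$1$ faces of $c$, we get $c\cap P=f$. Moreover $f$ carries $1$--cubes internal to $P$ (Lemma~\ref{lem:kill_whole_class}), so, exactly as in the proof of Lemma~\ref{lem:how_panels_intersect_cubes}, extremality of $P$ gives $c\subseteq\bur{P}=\bur{E\cap H}$; and the same applies to $P'$.

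I would then pin down the geometry inside the block. Because the internal $1$--cubes of $P$ are dual to its abutting hyperplane $H$, the panel $P$ is the ``$E$--face'' of $\bur{E\cap H}$: it is disjoint from $E$ and contained in a single halfspace $\vec E$ of $E$. Reading off $c$ in block coordinates, the fact that $c\cap P=f$ has codimension $1$ in $c$ forces $E$ to cross $c$, with $f$ the codimension--$1$ face of $c$ on the $\vec E$--side of $E$; and since $f$ carries $1$--cubes dual to $H$, the hyperplane $H$ crosses $c$ as well. Hence the opposite face $f'$ of $c$ lies on the other side of $E$, so $E$ is the unique hyperplane of $c$ separating $f$ from $f'$. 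The symmetric argument for $P'$ shows that $E'$ is also the hyperplane of $c$ separating $f$ from $f'$; since a codimension--$1$ face of a cube is disjoint from exactly one hyperplane of that cube, $E$ and $E'$ restrict to the same hyperplane of $c$, so $E=E'$ as hyperplanes of $\Psi$, with $P$ and $P'$ in opposite halfspaces of $E$. In particular $P\cap P'=\emptyset$, and $H,H'$ are both distinct from $E$.

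Finally I would invoke the no facing panels property. Pick a maximal cube $\hat c$ of $\Psi$ containing $c$. Since $E$ and $H$ cross $c\subseteq\hat c$, they cross $\hat c$, so $\hat c\subseteq N(E)\cap N(H)=\bur{E\cap H}$ and therefore $\hat c$ is a maximal cube of $\bur{E\cap H}$, and likewise of $\bur{E\cap H'}$. If $\bur{E\cap H}$ and $\bur{E\cap H'}$ are distinct, this contradicts the no facing panels property of $\mathcal P$: we have produced extremal panels $P,P'\in\mathcal P$ of two distinct blocks with $P\cap P'=\emptyset$, yet the blocks share the maximal cube $\hat c$. The one remaining case is $H=H'$; here two--sided extremality of $H\cap E$ in $H$ forces a product splitting $H\cong(H\cap E)\times[-\frac12,\frac12]$, and $P,P'$ are the two opposite $E$--faces of the single block $\bur{E\cap H}$. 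I expect this degenerate ``facing panels inside one block'' case to be the main obstacle: disposing of it cleanly requires either re-presenting $P'$ as a panel of a block genuinely distinct from $\bur{E\cap H}$ that still contains $\hat c$, and re-applying the no facing panels property, or checking directly that a collection with the no facing panels property cannot contain such a pair.
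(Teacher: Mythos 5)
Your approach is essentially the paper's: you reduce to a pair of parallel-opposite codimension--$1$ faces of a maximal cube, identify the extremalising hyperplanes of $P$ and $P'$ as the single hyperplane separating them, and then appeal to the no facing panels property. Your intermediate analysis (that $E$ crosses $c$, that $E$ is the unique hyperplane of $c$ separating $f$ from $f'$, that $E=E'$, and that $P\cap P'=\emptyset$) is correct and cleanly pins down the situation; the paper's own proof does this more tersely via the faces $n,n'$ of the maximal cube $m$.

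The ``facing panels inside one block'' case you flag is a genuine issue, and you are right to be uneasy about it. With the no facing panels property exactly as the paper states it (which quantifies over \emph{distinct} blocks), the lemma is actually false: take $\Psi=[0,1]^2$ with its two hyperplanes $E,H$, and let $\mathcal P$ consist of the two opposite $E$--panels $P=\{0\}\times[0,1]$ and $P'=\{1\}\times[0,1]$, both abutted by $H$ and extremalised by $E$. There is only one block, so the stated NFPP holds vacuously, yet $c=[0,1]^2$ is external while $f=P$ and $f'=P'$ are parallel internal faces. The paper's proof has the same latent gap: its final sentence invokes NFPP for the blocks $\bur{E\cap H_P},\bur{E\cap H_{P'}}$, which is vacuous when $H_P=H_{P'}$.

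The resolution is not inside Lemma~\ref{lem:no_facing} itself but in the statement of the no facing panels property: the restriction to distinct blocks should be dropped. The strengthened property says that $\mathcal P$ never contains two disjoint extremal panels whose blocks share a maximal cube, in particular that $\mathcal P$ never contains a pair of opposite panels of a single block. This costs nothing in the only place NFPP is verified: the proof of Lemma~\ref{lem:block_intersection_condition} never uses distinctness of the blocks. It writes $P'=gP$, deduces $gE=E^1$ and $gH=H^1$, and then shows $E=E^1$, so $gE=E$ while $g$ swaps the two sides of $E$; this is an inversion, a contradiction. That argument applies verbatim when $H^1=H$ and $E^1=E$, which is exactly your degenerate case. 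So the correct takeaway is: your first option (re-applying NFPP) works once NFPP is stated without the distinctness clause, and the verification that $G\cdot P$ satisfies this stronger property is already contained in the existing proof of Lemma~\ref{lem:block_intersection_condition}.

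One small omission: your proof implicitly excludes $P=P'$ (since your argument places $P$ and $P'$ in opposite halfspaces of $E$), but it would be cleaner to dispose of $P=P'$ up front via Lemma~\ref{lem:convex_panel_consequence}, as the paper does, since in that case $f$ and $f'$ contain internal $1$--cubes of $P$ not sharing a proper subcube, forcing $c$ to be internal directly.
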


\begin{proof}
Let $m$ be a maximal cube containing $c$.  Let $H$ be the hyperplane that 
crosses $c$ (and thus $m$) and separates $f,f'$.  Let $n,n'$ be the 
codimension--$1$ faces of $m$ that respectively contain $f,f'$ and are 
separated by $H$.  

If $P=P'$, then we are done, by Lemma~\ref{lem:convex_panel_consequence}.  
Hence suppose that $P\neq P'$ and $f'$ is not internal to $P$ and $f$ is not 
internal to $P'$.  Then, since $P,P'$ must intersect $m$ in codimension--$1$ 
faces, we have that the interior of $n$ lies in the inside of $P$, and the 
interior of $n'$ lies in the inside of $P'$, and this contradicts the no facing 
panels property.
\end{proof}

\begin{lem}[External cubes have persistent corners]\label{lem:cube_intersect}
Let $c$ be an external cube of $\Psi$. Then there exists a $0$--cube $v$ of 
$c$, and distinct $1$--cubes $e_1,\ldots,e_{\dimension c}$, contained in $c$ and 
incident to $v$, so that each $e_i$ is external.
\end{lem}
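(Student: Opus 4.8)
The plan is to induct on $\dimension c$, with the base case $\dimension c \in \{0,1\}$ being immediate: a $0$--cube has no $1$--cubes to worry about, and a $1$--cube is external precisely when it is not internal, so it serves as its own external edge at either endpoint. For the inductive step, suppose $\dimension c = n \ge 2$ and the statement holds for all cubes of smaller dimension. Since $c$ is external, Lemma~\ref{lem:convex_panel_consequence} tells us that all $1$--cubes of $c$ that are internal (to any panel in $\mathcal P$) lie in a single codimension--$1$ face; more precisely, I would first argue that for each parallelism class of $1$--cubes in $c$, there is an external representative. Indeed, if some class had all representatives internal, then by Lemma~\ref{lem:kill_whole_class} the interior of $c$ would lie in the inside of some panel, contradicting externality.

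The key step is then to pick a "good" codimension--$1$ face to which to apply induction. Write $c = c' \times e$ where $e$ is a $1$--cube whose parallelism class has an external representative (so, by Remark~\ref{rem:codim_1_external} applied appropriately, I can arrange $e$ itself to be external, or at least work with the face containing an external representative). I would like $c'$ to be external, so that induction gives a $0$--cube $v'$ of $c'$ together with external $1$--cubes $e_1, \dots, e_{n-1}$ incident to $v'$; then $v = v' \times \{0\}$ (a corner of $c$) together with $e_1 \times \{0\}, \dots, e_{n-1}\times\{0\}$ and the copy of $e$ at $v'$ gives the desired $n$ external $1$--cubes incident to $v$, provided these copies of $e_i$ and of $e$ are external in $c$. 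For the $e_i$: an external $1$--cube of the face $c'$ need not a priori remain external in $c$, but since $e_i$ is external in $c'$, its parallelism class has an external representative in $c'$, hence in $c$; combined with Lemma~\ref{lem:convex_panel_consequence} (internal $1$--cubes of $c$ lie in one codimension--$1$ face) one deduces $e_i$ itself stays external, after possibly choosing the decomposition and the corner to avoid that exceptional face.

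The main obstacle is precisely this bookkeeping: ensuring that a single corner $v$ of $c$ can be found at which $n$ pairwise non-parallel external $1$--cubes all meet. The danger is that the internal $1$--cubes, though confined to one codimension--$1$ face $f_0$ by Lemma~\ref{lem:convex_panel_consequence}, might force every corner to be incident to at least one internal edge — but this cannot happen, since the corner $v$ diagonally opposite $f_0$ (i.e. the corner lying in the codimension--$1$ face parallel to $f_0$) is incident only to $1$--cubes dual to the hyperplane separating $f_0$ from its parallel face, or to $1$--cubes lying in the face parallel to $f_0$; none of the latter are internal (they'd lie in $f_0$ too, or be parallel to ones that do, contradicting confinement), and the former is external by the choice above. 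So the clean approach is: if no $1$--cube of $c$ is internal, $c$ is completely external and any corner with all its incident edges works; otherwise, let $f_0$ be the codimension--$1$ face containing all internal $1$--cubes, let $\bar f_0$ be its parallel opposite, observe $\bar f_0$ is completely external (any internal $1$--cube of $\bar f_0$ would, being parallel to a $1$--cube of $f_0$... — here one uses Lemma~\ref{lem:kill_whole_class}'s converse carefully), apply the inductive hypothesis to $\bar f_0$ to get a corner $v$ and external $1$--cubes $e_1, \dots, e_{n-1}$ in $\bar f_0$ incident to $v$, and adjoin the $1$--cube $e_n$ of $c$ incident to $v$ and dual to the hyperplane separating $f_0, \bar f_0$; this $e_n$ is external because if it were internal to some $P$, then (as $e_n$ is parallel to a $1$--cube of $f_0$) Lemma~\ref{lem:how_panels_intersect_cubes} would put $e_n$ inside a codimension--$1$ face on which every parallel $1$--cube is internal, forcing that face to be $f_0$ — but $e_n \not\subset f_0$, a contradiction. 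This gives the required $v, e_1, \dots, e_n$.
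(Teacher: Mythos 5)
Your argument has a genuine gap, and it stems from a misreading of Lemma~\ref{lem:convex_panel_consequence}. That lemma is a \emph{per-panel} statement: for a fixed $P\in\mathcal P$, the $1$--cubes of $c$ internal \emph{to $P$} lie in a single codimension--$1$ face $f_P$. It does not say that all internal $1$--cubes of $c$, across all panels, lie in one face. Several panels can each contribute internal $1$--cubes to $c$, confined to \emph{different} codimension--$1$ faces. Your ``clean approach'' posits a single face $f_0$ containing every internal $1$--cube, concludes the opposite face $\bar f_0$ is completely external, and works from there; but when two or more panels contribute, no such $f_0$ exists. Worse, in that situation $c$ can have \emph{no} completely external codimension--$1$ face at all, so there is nothing for your induction to land on. This is exactly the middle case of Figure~\ref{fig:fundamentz}: a $3$--cube with three panels contributing internal $1$--cubes, one persistent corner $h$, and every codimension--$1$ face touching some internal $1$--cube.

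A second, related issue: you never invoke the no-facing-panels hypothesis on $\mathcal P$ (via Lemma~\ref{lem:no_facing}), yet without it the lemma is simply false --- a $2$--cube with two opposite edges internal to two different panels would be ``external'' in your sense but have no persistent corner. Your appeal to the converse of Lemma~\ref{lem:kill_whole_class} (``if some class had all representatives internal, $c$ would be internal'') also quietly needs this, since that converse requires all representatives internal to the \emph{same} $P$.

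The paper's proof handles multiple contributing panels simultaneously and dispenses with induction. For each $P$ contributing internal $1$--cubes to $c$, write $P\cap c=f_P$ and let $b_P$ be the opposite codimension--$1$ face. Lemma~\ref{lem:no_facing} (which is where no-facing-panels enters) shows $b_P\cap b_{P'}\neq\emptyset$ for any two such $P,P'$, so the Helly property for convex subcomplexes gives a nonempty subcube $c'=\bigcap_P b_P$. Any $0$--cube $v$ of $c'$ is then a persistent corner: $1$--cubes of $c'$ at $v$ avoid every $f_P$ and are external, and a $1$--cube at $v$ leaving $c'$ cannot lie in any $f_P$ either, since the separating hyperplane would force $v\notin b_P$. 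This is the idea you want to replace the single-face assumption with.
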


A 0--cube $v$ as in the lemma is a \emph{persistent corner} of $c$.

\begin{proof}
For each $P\in\mathcal P$, Lemma~\ref{lem:convex_panel_consequence} implies 
that $P\cap c$ is either empty or confined to a codimension--$1$ face $f_P$ of 
$c$.  Identifying $c$ with $f_P\times[-\half,\half]$ and $f_P$ with 
$f_P\times\{\half\}$, let $b_P=f_P\times\{-\half\}$, which is a 
codimension--$1$ face of $c$.  For each $P,P'$ as above, 
Lemma~\ref{lem:no_facing} shows 
that $b_P\cap b_{P'}\neq\emptyset$, so by the Helly property for convex 
subcomplexes, $\bigcap_Pb_P$ is  a nonempty sub-cube $c'$ of $c$ (the 
intersection 
is taken over all panels in $\mathcal P$ contributing an internal $1$--cube 
to $c$).  

Let $v$ be a $0$--cube of $c'$.  By construction, any $1$--cube $e$ of $c'$ 
incident to $v$ is external.  Next, suppose that $e$ is a $1$--cube of $c$ 
incident to $v$ that joins $c'$ to a $1$--cube not in $c'$.  If $e$ is internal 
to some panel $P$, then $e$ is separated from $b_P$ by a hyperplane of $c$.  
Since $v\in e$, we have $v\not\in b_P$, a contradiction.  Hence $e$ is 
external.
\end{proof}

\begin{defn}[Persistent subcube, salient subcube]\label{defn:persistent_subcube}
Let $c$ be an external cube of $\Psi$.  Lemma~\ref{lem:cube_intersect} implies 
that $c$ has at least one persistent corner.  Let $h(c)$ be the ($\ell_1$) convex hull of 
the persistent corners of $c$, i.e. the smallest sub-cube of $c$ containing all 
of the persistent corners.  We call $h(c)$ the \emph{persistent subcube} of 
$c$.  Let $\bar h(c)$ be the subcube of $c$ with the following properties:
\begin{itemize}
     \item $\bar h(c)$ is parallel to $h(c)$, i.e. $\bar h(c)$ and $h(c)$ 
intersect exactly the same hyperplanes;
\item the hyperplane $H$ of $c$ separates $h(c)$ from $\bar h(c)$ if and only 
if $H$ does not cross $h(c)$ and $H$ is dual to some internal $1$--cube of $c$.
\end{itemize}
We call $\bar h(c)$ the \emph{salient subcube} of $c$.  Note that $h(c)$ and 
$\bar 
h(c)$ are either equal or disjoint, and have the same dimension.  Also, if $c'$ 
is a sub-cube of $c$, and $h(c)\cap c'\neq\emptyset$, then $c'$ contains a 
persistent corner of $c$, by minimality of $h(c)$.
\end{defn}

\begin{lem}\label{lema:all_persistent}
Every $0$--cube of $h(c)$ is persistent.
\end{lem}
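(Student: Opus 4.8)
The plan is to show that the persistent corners of $c$ are \emph{exactly} the $0$--cubes of the subcube $\bigcap_P b_P$ identified in the proof of Lemma~\ref{lem:cube_intersect}; since $h(c)$ is by definition the convex hull of the persistent corners, and that intersection $c' = \bigcap_P b_P$ is itself a subcube (hence convex), this will force $h(c) \subseteq c'$, and combined with the fact that every $0$--cube of $c'$ is persistent this gives $h(c) = c'$ and the lemma. So the real content is the two inclusions: every persistent corner lies in $c'$, and conversely every $0$--cube of $c'$ is persistent (the latter is already established inside the proof of Lemma~\ref{lem:cube_intersect}).

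First I would recall the setup from Lemma~\ref{lem:cube_intersect}: for each $P \in \mathcal P$ contributing an internal $1$--cube to $c$, Lemma~\ref{lem:convex_panel_consequence} confines $c \cap P$ to a codimension--$1$ face $f_P$ of $c$; writing $c = f_P \times [-\half,\half]$ with $f_P = f_P \times \{\half\}$, we set $b_P = f_P \times \{-\half\}$. Then I would argue that if $v$ is any persistent corner of $c$, then $v \in b_P$ for every such $P$. Indeed, by definition of persistent corner there are $1$--cubes $e_1, \dots, e_{\dim c}$ incident to $v$, one in each parallelism class of $c$, all external. Suppose for contradiction that $v \notin b_P$ for some $P$; then $v \in f_P$. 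The hyperplane $H$ separating $f_P$ from $b_P$ is dual to one of the $e_i$, say $e_1$; but then $e_1$ runs from $v \in f_P$ toward $b_P$... that doesn't immediately contradict externality. Let me instead argue directly: since $v \in f_P$ and $f_P$'s interior lies in the inside of $P$, I claim some $1$--cube incident to $v$ is internal to $P$. The edges of $c$ incident to $v$ split into those lying in $f_P$ and the single edge $e$ dual to $H$. The edges lying in $f_P$: by Lemma~\ref{lem:how_panels_intersect_cubes} applied to $f_P$ (or rather the third bullet following it), any edge of $f_P$ parallel to the abutting-hyperplane-direction of $P$ is internal to $P$. Since $v \in f_P$ and $f_P$ has an edge incident to $v$ in that direction, that edge is internal, contradicting that all edges incident to $v$ are external. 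Hence $v \in b_P$ for all $P$, i.e. $v \in c'$, giving the inclusion $\{\text{persistent corners}\} \subseteq c'$, hence $h(c) \subseteq c'$.

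For the reverse inclusion, I would simply invoke the argument already present in the proof of Lemma~\ref{lem:cube_intersect}: every $0$--cube $v$ of $c' = \bigcap_P b_P$ is a persistent corner, because any edge $e$ of $c'$ incident to $v$ is external by construction, and any edge $e$ of $c$ incident to $v$ \emph{not} in $c'$ would, if internal to some $P$, be separated from $b_P$ by a hyperplane of $c$, contradicting $v \in e \cap b_P$. Since $c'$ is a subcube containing an edge incident to $v$ in every parallelism class of $c$ that $c'$ spans, and the remaining classes are covered by the external edges leaving $c'$ at $v$, we get a full set of $\dim c$ external edges at $v$, so $v$ is persistent. Therefore $c' \subseteq h(c)$, and we conclude $h(c) = c'$; in particular every $0$--cube of $h(c)$ is persistent.

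The main obstacle I anticipate is the first inclusion — cleanly deducing from "$v$ lies in the face $f_P$ where panel $P$ meets $c$" that "some edge at $v$ is internal to $P$." One must be careful that $f_P$ could be a proper face in which the relevant internal direction still appears; this is exactly what the third bullet after Lemma~\ref{lem:how_panels_intersect_cubes} guarantees (an edge of $f_P$ dual to the abutting hyperplane, together with all its parallels in $f_P$, is internal), so the argument goes through, but the bookkeeping about which edge at $v$ witnesses internality is the delicate point. Everything else is a direct appeal to Helly (already done) and to the construction of $c'$ in Lemma~\ref{lem:cube_intersect}.
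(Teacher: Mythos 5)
Your proposal is correct and matches the paper's own proof: both establish $h(c) = \bigcap_P b_P$ via the same two inclusions, with the forward direction (persistent corners $\subseteq \bigcap_P b_P$) justified by observing that any $0$--cube of $f_P = P\cap c$ has an incident internal $1$--cube in $f_P$. The paper states this last step tersely (``at least one $1$--cube of $P\cap c$ incident to $v$ is internal''), while you spell out that the internal edge comes from the abutting-hyperplane direction via the third bullet after Lemma~\ref{lem:how_panels_intersect_cubes}, but the argument is identical.
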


\begin{proof}
Adopt the notation of Lemma~\ref{lem:cube_intersect}.  So, $c$ has a nonempty subcube $c'$ which is the intersection 
$\bigcap_{P\in\mathcal P}b_P$, where for each panel $P\in\mathcal P$, $b_P$ is the codimension--$1$ face opposite $P\cap c$. 
 The proof of Lemma~\ref{lem:cube_intersect} shows that every $0$--cube of $c'$ is a persistent corner of $c$.  
 
 In particular, $c'\subseteq h(c)$.  Now suppose that $v$ is a $0$--cube of $c$.  Suppose that $v\in P\cap c$ for 
some $P\in\mathcal P$.  Then at least one $1$--cube of $P\cap c$ incident to $v$ is internal, so $v$ is not a persistent 
corner of $c$.  Hence every persistent corner of $c$ lies in $c'$, so $h(c)\subseteq c'$.  Thus $h(c)=c'$.  Since every 
$0$--cube of $c'$ is persistent, the lemma follows.
\end{proof}

\begin{rem}[Equivalent characterisation of persistent subcubes]\label{rem:equiv_char}
Given an external cube $c$ of $\Psi$, we can equivalently characterise $h(c)$ and $\bar h(c)$ as follows.  As 
in Lemma~\ref{lem:cube_intersect}, for each panel $P\in\mathcal P$ intersecting $c$, let $f_P$ be the 
codimension--$1$ face in which $P$ intersects $c$ and let $b_P$ be the opposite face.  Then 
$h(c)=\bigcap_Pb_P$.  
\end{rem}

\subsection{Building the subspace $\mathcal F(c)$}
Let $c$ be a (closed) cube of $\Psi$.  We define a subspace $\mathcal F(c)$ of 
$c$ as follows.  First, let $\mathcal D(c)$ be the union of all completely 
external cubes of $c$.  In other words, $\mathcal D(c)$ is obtained from $c$ by 
deleting exactly those open sub-cubes whose closures contain internal $1$--cubes.

Observe that $\mathcal D(c)\cap 
c'=\mathcal D(c')$ for all sub-cubes $c'$ of $c$.

Before defining $\mathcal F(c)$, we need the following important fact about $\mathcal D(c)$:

\begin{lem}\label{lem:connected_hereditary}
If $c$ is an external cube and $c'$ an external sub-cube of $c$, then 
$\mathcal D(c')$ is connected provided $\mathcal D(c)$ is 
connected.
\end{lem}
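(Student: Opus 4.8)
The plan is to understand $\mathcal D(c')$ as a subspace of $\mathcal D(c)$ and to produce explicit paths in $\mathcal D(c')$ between arbitrary pairs of its $0$--cubes, using the hypothesis that $\mathcal D(c)$ is connected to route the paths. First I would record the elementary observation already stated: $\mathcal D(c)\cap c'=\mathcal D(c')$, so $\mathcal D(c')$ is exactly the union of the completely external subcubes of $c'$, and these are completely external in $c$ as well (a $1$--cube internal to a panel is internal regardless of which cube we view it in). Thus $\mathcal D(c')\subseteq\mathcal D(c)$, and since $\mathcal D(c)$ is a union of cubes along their faces, connectedness of a cube complex like $\mathcal D(c')$ is equivalent to path-connectedness of its $1$--skeleton, i.e.\ to the statement that any two $0$--cubes of $c'$ can be joined by an edge-path each of whose edges is a completely external $1$--cube of $c'$.

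Next I would fix two $0$--cubes $u,v$ of $c'$ and consider a geodesic edge-path $\gamma$ from $u$ to $v$ in the $1$--skeleton of $c'$; its edges are dual to the hyperplanes of $c$ separating $u$ from $v$, each crossed exactly once. If every hyperplane separating $u$ from $v$ is dual only to external $1$--cubes of $c$, then $\gamma$ already lies in $\mathcal D(c')$ and we are done. Otherwise some hyperplane $D$ separating $u,v$ is dual to an internal $1$--cube of $c$. Here is where I expect the main obstacle to lie: an internal $1$--cube dual to $D$ need not force \emph{every} $D$--dual $1$--cube of $c'$ to be internal, but Remark~\ref{rem:codim_1_external} (the consequence of Lemma~\ref{lem:how_panels_intersect_cubes}) tells us exactly when it does not — namely, if $D$ is internal then any two external $D$--dual $1$--cubes must fail to be separated by some hyperplane other than $D$, equivalently they lie in a common codimension--$1$ face. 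Combined with Lemma~\ref{lem:how_panels_intersect_cubes} and Lemma~\ref{lem:convex_panel_consequence}, this says that the internal $1$--cubes dual to $D$ in $c$ form exactly one of the two codimension--$1$ faces of $c$ perpendicular to $D$ (after taking the panel's abutting hyperplane into account), so the external $D$--dual $1$--cubes of $c$ are precisely those in the \emph{opposite} face $b$. The key point is then: within $c$, one can reach $b$ from anywhere using only external edges, because on $b$ itself all $D$--dual edges are external, and edges not dual to $D$ are unaffected by $D$'s internality.

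Concretely, I would argue as follows. Enumerate the ``bad'' hyperplanes $D_1,\dots,D_k$ separating $u$ from $v$ that are dual to internal $1$--cubes of $c$. For each $D_j$, Lemma~\ref{lem:convex_panel_consequence} and Lemma~\ref{lem:how_panels_intersect_cubes} show the internal $D_j$--dual $1$--cubes all lie in one codimension--$1$ face $f_j$ of $c$; let $b_j$ be the opposite face, so every $D_j$--dual $1$--cube of $b_j$ is external. By Lemma~\ref{lem:no_facing}, for $j\neq j'$ the faces $b_j,b_{j'}$ intersect (otherwise $c$ would be internal, contradicting that $c$ is external — note $u,v\in c$ so in particular $c\ne\emptyset$ and $c$ external); hence by the Helly property $b:=\bigcap_j b_j$ is a nonempty subcube of $c$, and $b$ contains a persistent corner of $c$ by Lemma~\ref{lema:all_persistent} and Remark~\ref{rem:equiv_char}. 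On $b$, every $1$--cube is completely external: a $1$--cube of $b$ dual to some $D_j$ is external by choice of $b_j$, and Remark~\ref{rem:codim_1_external} propagates this; a $1$--cube dual to any other hyperplane is external since that hyperplane is not dual to an internal $1$--cube of $c$ (a $1$--cube $e$ dual to $D$ is internal for $P$ iff $D$ is the abutting hyperplane of $P$ with $e$ internal, and this is a property of $D$ across all of $c$ by Lemma~\ref{lem:how_panels_intersect_cubes}). Now take the edge-path that goes from $u$ to the gate (nearest $0$--cube) $\bar u$ of $u$ in $b$, crossing only hyperplanes $D$ with $u\not\in b$ on the $D$--side — but we must route these through external edges. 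Since $\mathcal D(c)$ is connected, there is already an external edge-path from $u$ to $\bar u$ inside $c$; similarly from $v$ to $\bar v\in b$; and inside $b$ (all of whose edges are completely external) from $\bar u$ to $\bar v$. Concatenating, we obtain an external edge-path from $u$ to $v$; but all edges involved lie in subcubes of $c$ that — being spanned by external edges incident at a common vertex, or lying in $b$, or lying in the given $\mathcal D(c)$--path which we may take to be a concrete edge-path — are themselves subcubes of $c'$ when $u,v\in c'$? This last point requires care: an arbitrary $\mathcal D(c)$--path from $u$ to $\bar u$ need not stay in $c'$. To fix this, I would instead choose the intermediate vertices $\bar u,\bar v$ to be gates of $u,v$ in $c'\cap b$ (nonempty, since $b$ contains a persistent corner and by Remark~\ref{rem:equiv_char} $h(c)\cap c'\ne\emptyset$ forces $c'$ to meet the persistent structure — more simply, $c'\cap b=\bigcap_j(c'\cap b_j)$ is a nonempty subcube by Helly, using that the $c'\cap b_j$ pairwise intersect since the $b_j$ do and all contain the persistent corner which, after a short argument using Lemma~\ref{lema:all_persistent}, may be taken in $c'$), and then route from $u$ to $\bar u$ along a $c'$--geodesic whose only potentially internal hyperplanes are among $D_1,\dots,D_k$ — but crossing $D_j$ from $u$ toward $b_j$ means the edge lies in (a translate of) $b_j$'s side, hence is external by the above. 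Iterating over $j$ and using that the $D_j$ are distinct, each such crossing edge is external, so the whole $c'$--geodesic from $u$ to $\bar u$ is external; likewise from $v$ to $\bar v$; and the path inside $c'\cap b$ from $\bar u$ to $\bar v$ is external since $c'\cap b\subseteq b$. Concatenating gives an external edge-path from $u$ to $v$ in $c'$, so $\mathcal D(c')$ is connected. The hardest part is exactly this bookkeeping: verifying that ``cross $D_j$ toward $b_j$'' can be done simultaneously for all $j$ along a single $c'$--geodesic and that each such edge is genuinely external — this is where Remark~\ref{rem:codim_1_external} and Lemma~\ref{lem:how_panels_intersect_cubes} do the real work, and where one must be careful that internality is a property of a hyperplane-and-panel pair, consistent across all subcubes.
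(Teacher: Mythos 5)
There is a genuine gap, and it occurs at the step you flag as using Lemmas~\ref{lem:how_panels_intersect_cubes} and~\ref{lem:convex_panel_consequence} to produce a \emph{single} codimension--$1$ face $f_j$ containing all internal $D_j$--dual $1$--cubes of $c$. Those lemmas are per-panel statements: for a fixed panel $P\in\mathcal P$, the $1$--cubes internal \emph{to $P$} lie in the single codimension--$1$ face $P\cap c$. But a single hyperplane $D_j$ can be the abutting hyperplane of several panels in $\mathcal P$ meeting $c$, giving several distinct faces $f_P$ (necessarily non-parallel, by Lemma~\ref{lem:no_facing}, but possibly distinct), and the union of the internal $D_j$--dual edges over all such $P$ need not be contained in any one face. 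Concretely: in a $3$--cube with two panels $P_1,P_2$ abutting the same hyperplane $D$ and occupying two adjacent (non-parallel) square faces, the internal $D$--dual edges are spread over both faces and the set of external $D$--dual edges is a single edge, not a face. So there is no legitimate choice of $b_j$ with the property you need, and the subsequent intersection $b=\bigcap_j b_j$ is not the object you think it is. The correct candidate is $h(c)=\bigcap_P b_P$ over \emph{all} panels meeting $c$ (Remark~\ref{rem:equiv_char}), but substituting that reopens the rest of the argument.

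There is a second, independent gap in the routing step even if one patches the first. Your $D_1,\dots,D_k$ are only the internal hyperplanes \emph{separating $u$ from $v$}; other hyperplanes of $c$ dual to internal $1$--cubes may exist. Those hyperplanes can cross $b$ (or $c'\cap b$) and also separate $u$ from its gate $\bar u$, so the assertion ``on $b$ every $1$--cube is completely external'' and the assertion that the $c'$--geodesic from $u$ to $\bar u$ only meets internal hyperplanes among $D_1,\dots,D_k$ are both unjustified. The paper avoids all of this by a different mechanism: in the codimension--$1$ case it shows the orthogonal projection $\pi\colon c\to c'$ carries completely external cubes to completely external cubes, hence restricts to a continuous surjection $\mathcal D(c)\twoheadrightarrow\mathcal D(c')$, and connectedness is inherited; the general case then reduces to codimension $1$ by finding one external edge $e$ meeting $c'$ (using Lemmas~\ref{lema:all_persistent} and~\ref{claim:opposite}) and inducting through $c'\times e$. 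If you want to salvage a routing-style proof you would essentially have to re-derive the surjectivity of $\pi$ anyway, so the projection argument is the shorter route.
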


\begin{proof}
Let $d=\dimension c$ and $d'=\dimension c'$.  If $d=d'$, then $c=c'$ and the lemma is immediate.  If $d'=0$, 
then $\mathcal 
D(c')=c'$, and again the lemma holds.  Hence assume that $0<d'<d$.

Let $\pi:c\to c'$ be the canonical projection, where $c$ is regarded as $[-\frac12,\frac12]^{d}$ and 
$c'$ is regarded as $[-\frac12,\frac12]^{d'}$.  We first establish some auxiliary claims, the first of which is 
the special 
case of the lemma where $d-d'=1$.  

\begin{claim}\label{claim:codim_1_connected}
If $c$ is external and $c'$ is a codimension--$1$ external face, and $\mathcal D(c)$ is connected, then 
$\mathcal D(c')$ is 
connected.     
\end{claim}
\renewcommand{\qedsymbol}{$\blacksquare$}
\begin{proof}[Proof of Claim~\ref{claim:codim_1_connected}]
Since $\pi$ restricts to the identity on $c'$, and $\mathcal D(c')\subseteq\mathcal D(c)$, we have $\mathcal 
D(c')\subseteq 
\pi(\mathcal D(c))$.  We claim that $\pi(\mathcal D(c))\subseteq\mathcal D(c')$.  By definition, $\mathcal 
D(c)$ is the 
union of the completely external cubes of $c$, and $\mathcal D(c')$ is the union of the completely external 
cubes of $c'$.  
Hence it suffices to show that $\pi$ takes each completely external cube $f$ to a completely external cube.

If $f\cap c'\neq\emptyset$, then $f\cap c'$ is a completely external cube of 
$c'$.  On the other hand, $\pi(f)=f\cap c'$, so $\pi(f)$ is completely external.

Otherwise, $f$ is separated from $c'$ by some unique hyperplane $E$, and 
$\pi(f)$ is a cube of $c'$ crossing exactly those hyperplanes that cross $f$ and $c'$ and 
separated from $f$ by $E$.  If 
$\pi(f)$ is not completely external, then there is a $1$--cube $\pi(e)$ of 
$\pi(f)$ 
that is internal.  Now, since $f$ is completely external, every $1$--cube $e$ 
of $f$ parallel to $\pi(e)$ is external.  

For each codimension--$1$ face $c''$ containing $e$, there is no panel $P$ 
so that $P\cap c=c''$ and the abutting hyperplane for $P$ is dual to $e$.  By 
Lemma~\ref{lem:how_panels_intersect_cubes},\ref{lem:convex_panel_consequence}, 
there is a codimension--$1$ face 
$c'''$ and a panel $P$ so that $P\cap c=c'''$ and $\pi(e)$ itself is internal 
to $P$.  Since $E$ is the only hyperplane separating $f$ from $\pi(f)$, the 
face $c'''$ cannot cross $E$, for otherwise it would contain $e$.  Hence 
$c'''=c'$.  Hence every $1$--cube of $c'$ parallel to $e$ is internal, so $c'$ 
is internal, a contradiction.  

We have shown that $\pi$ restricts on $\mathcal 
D(c)$ to a surjection $\mathcal D(c)\to\mathcal D(c')$.  Since $\mathcal D(c)$ 
is connected and $\pi$ is continuous, it follows that $\mathcal D(c')$ 
is connected.
\end{proof}

\renewcommand{\qedsymbol}{$\Box$}

\textbf{Conclusion:}  Recall that $c'$ is an external subcube of $c$.  By Claim~\ref{claim:codim_1_connected}, 
if $d-d'=1$, 
then $\mathcal D(c')$ is connected.  So, assume that $d-d'\ge 2$.

Next, let $\mathcal E$ be the set of $1$--cubes $e$ of $c$ such that $c'\cap e\neq\emptyset$, and $c'$ does not 
contain $e$. 
 Note that for each $e\in\mathcal E$, there is a sub-cube $c'\times e$ of $c$.  Moreover, $c'\times e$ has 
codimension at 
least $1$ in $c$, because $c'$ has codimension at least $2$.

We claim that there exists an external $1$--cube $e\in \mathcal E$.  There are two cases to consider.  First, 
if $c'\cap 
h\neq\emptyset$, then by Lemma~\ref{lema:all_persistent}, $c'$ contains a persistent corner $v$ of $c$.  Any 
$1$--cube $e$ 
emanating from $v$ is external, by the definition of a persistent corner, and since such a $1$--cube can be 
chosen so as not 
to lie in $c'$, we see that $\mathcal E$ contains an external $1$--cube.  Second, suppose that $c'\cap 
h=\emptyset$ and 
suppose that every $1$--cube in $\mathcal E$ is internal.  Let $P$ be an edge-path in $c$ joining $h$ to $c'$.  
Then $P$ 
must pass through a $1$--cube in $\mathcal E$.  If every $1$--cube in $\mathcal E$ is internal, then $P$ cannot 
lie in 
$\mathcal D(c)$.  But since every $0$--cube of $c$ is in $\mathcal D(c)$, and $\mathcal D(c)$ is connected, 
this gives a 
contradiction.  Thus, in either case, $\mathcal E$ contains an external $1$--cube $e$.

Thus, by Lemma~\ref{claim:opposite}, $c'\times e$ is an external sub-cube of $c$ of codimension $d-d'-1$.  By 
induction on 
codimension, $\mathcal D(c'\times e)$ is connected.  Now, since $c'$ is a codimension--$1$ external subcube of 
$c'\times e$, 
Claim~\ref{claim:codim_1_connected} implies that $\mathcal D(c')$ is connected, as required.
\end{proof}

Now we can define $\mathcal F(c)$ for each cube $c$ of $\Psi$.  The idea is to construct a subspace of $c$ 
that contains all the completely external cubes (in particular, all the $0$--cubes) but does not contain the 
external cubes, and behaves well under intersections of cubes.  We will also want $\mathcal F(c)$ to have the 
structure of a locally CAT(0) cube complex and, in the case where $c$ is external, $\mathcal F(c)$ will 
actually be a deformation retract of $c$.  

The case where $c$ is internal plays only a small role, and is easy 
to deal with, so we handle it first:  

\begin{defn}[$\mathcal F(c)$ for internal $c$]\label{defn:internal}
Let $c$ be an internal cube.  Then $\mathcal F(c)=\mathcal D(c)$.  (Note that 
in this case, $\mathcal D(c)$ is disconnected by the abutting hyperplane for 
each panel in $\mathcal P$ in which $c$ is internal.)
\end{defn}

The definition of $\mathcal F(c)$ when $c$ is external is more complicated.  The reason is that $\mathcal 
D(c)$ can be disconnected, and in such situations, $\mathcal F(c)$ (which must be connected) cannot be taken 
to be a subcomplex.  Accordingly, we divide into cases according to whether $\mathcal D(c)$ is connected or 
not.  (Lemma~\ref{lem:connected_hereditary} above shows that this division into cases is stable under passing 
to subcubes, which will be important for later arguments by induction on dimension.)  The connected case is 
straightforward:

\begin{defn}[$\mathcal F(c)$ for external $c$, connected 
case]\label{defn:external_connected}
Let $c$ be an external cube for which $\mathcal D(c)$ is connected.  Then 
$\mathcal F(c)=\mathcal D(c)$.  This includes the case where $c$ is completely 
external, in which case $\mathcal F(c)=\mathcal D(c)=c$.
\end{defn}

Now let $c$ be an external cube that is not completely 
external.  

Lemma~\ref{lem:cube_intersect} provides a persistent corner $v\in 
c$.  Let $h(c)=h$ be the persistent subcube of $c$.  By Lemma~\ref{lema:all_persistent}, $h$ is completely external, and is 
thus $h\subsetneq c$.  Let $\bar h=\bar h(c)$ be the salient subcube of $c$.

If $h$ is a codimension--$1$ face, and $h=\bar h$, then every $1$--cube emanating from $h$ is external, so
every $0$--cube is connected to $h$ by an external $1$--cube, so $\mathcal D(c)$ is connected and $\mathcal F(c)=\mathcal 
D(c)$ by Definition~\ref{defn:external_connected}.

If $h$ is a codimension--$1$ face and $h\neq\bar h$, then there is a hyperplane $E$ separating $h,\bar h$ since $h,\bar h$ 
are parallel and distinct.  On the other hand, the definition of $\bar h$ implies that some $1$--cube dual to $E$ is 
internal.  But since $h$ is codimension--$1$, Lemma~\ref{lema:all_persistent} implies that every $1$--cube dual to $E$ is 
external, a contradiction.  

Hence we can assume that $\dimension c-\dimension h\ge2$.

For each codimension--$1$ face $c'$ of $c$ containing some persistent
corner $v_i$ of $c$, the cube $c'$ 
is spanned by a set of external $1$--cubes, so $c'$ is external.  By induction 
on dimension, we have defined $\mathcal F(c')$ to be a contractible 
subspace of $c'$ that contains $\mathcal D(c')$ and also contains the convex 
hull in $c'$ of all persistent corners of $c'$ (in particular, $\mathcal F(c')$ 
contains $h\cap c'$).  The base case is where $c$ is a 
$0$--cube, which is necessarily completely external, so $\mathcal F(c)=c$.

\textbf{$\mathcal F_0(c)$: Assembling pieces from codimension--$1$
  faces.} Now let $\faces'(c)$ be the set of codimension--$1$
sub-cubes $c'$ of $c$ containing a persistent corner $v_i$.  Let
$\mathcal F_0(c)=\bigcup_{c'\in\faces'(c)}\mathcal F(c')$.  Since $h$
is connected and intersects $\mathcal F(c')$ for each
$c'\in\faces'(c)$, the subspace $\mathcal F_0(c)$ is connected.  (Later, in Lemma~\ref{lem:compat}, we will see that 
$\faces'(c)$ is precisely the set of external codimension--$1$ faces of $c$.)

\textbf{$\mathcal F_1(c)$: Connecting subcubes of $h$ to their opposites in $\bar
  h$.} Recall that $\bar h$ is the parallel copy of $h$ in $c$ that is separated
from $h$ by exactly those hyperplanes $H$ so that:
\begin{itemize}
     \item $H$ does not cross $h$;
     \item at least one $1$--cube dual to $H$ is internal.
\end{itemize}
Let $\kappa\leq d-\dimension h$ be the number of hyperplanes
separating $h,\bar h$.  For each completely external cube
$w\subseteq\bar h$, let $\mathcal S(w)$ be the $\ell_2$--convex hull of $w$ and
the cube $\bar w$ (a sub-cube of $h$) parallel to $w$ and separated
from $w$ by the above hyperplanes.

Equivalently, $c$ has an $\ell_2$--convex subspace
$h\times\left[-\frac{\sqrt{\kappa}}{2},\frac{\sqrt{\kappa}}{2}\right]$,
intersecting exactly those hyperplanes that separate the persistent
and salient cubes, and for each completely external subcube $w$ of
$\bar h = h\times\{\frac{\sqrt{\kappa}}{2}\}$, let
$\mathcal S(w)=w\times\left[-\frac{\sqrt{\kappa}}{2},\frac{\sqrt{\kappa}}{2}\right]$. Also
note that $\dimension \mathcal S(w)=\dimension w + 1\leq d-1$ since
$\dimension h\leq d-2$ and $w\subseteq h$.

Form $\mathcal F_1(c)$ from
$\mathcal F_0(c)$ by adding $\mathcal S(w)$ for each
completely external cube $w$ of $\bar h$.

\begin{rem}\label{rem:nonempty_S}
Observe that, if $\mathcal D(c)$ is disconnected, then there exists $w$ as above with $\mathcal S(w)\neq\emptyset$.  Indeed, 
the persistent subcube $h$ is nonempty and $\bar h$ is therefore nonempty, since it is parallel to $h$.  Any $0$--cube $w$ 
of $\bar h$ is completely external, so by definition $\mathcal S(w)$ is a nonempty subspace containing $w$.  Moreover, 
$\mathcal S(w)$ is contained in $\mathcal F_1(c)$.  
\end{rem}

\textbf{$\mathcal{F}(c)$: Adding the missing cubes from
  $\mathcal{D}(c)$.}  To complete the definition of $\mathcal F(c)$,
we have to ensure that it contains $\mathcal D(c)$, in order to
support the above inductive part of the definition.  In other words, we need to
add to $\mathcal F_1(c)$ any completely external sub-cube of $c$ that
does not already appear in $\mathcal F_1(c)$.  Hence let $\mathcal F(c)$ be the union of $\mathcal F_1(c)$ and any
totally external cube of $c$ not already contained in
$\mathcal F_1(c)$.  Some examples are shown in Figure~\ref{fig:fundamentz}.

\begin{figure}[h]
\begin{overpic}[width=0.5\textwidth]{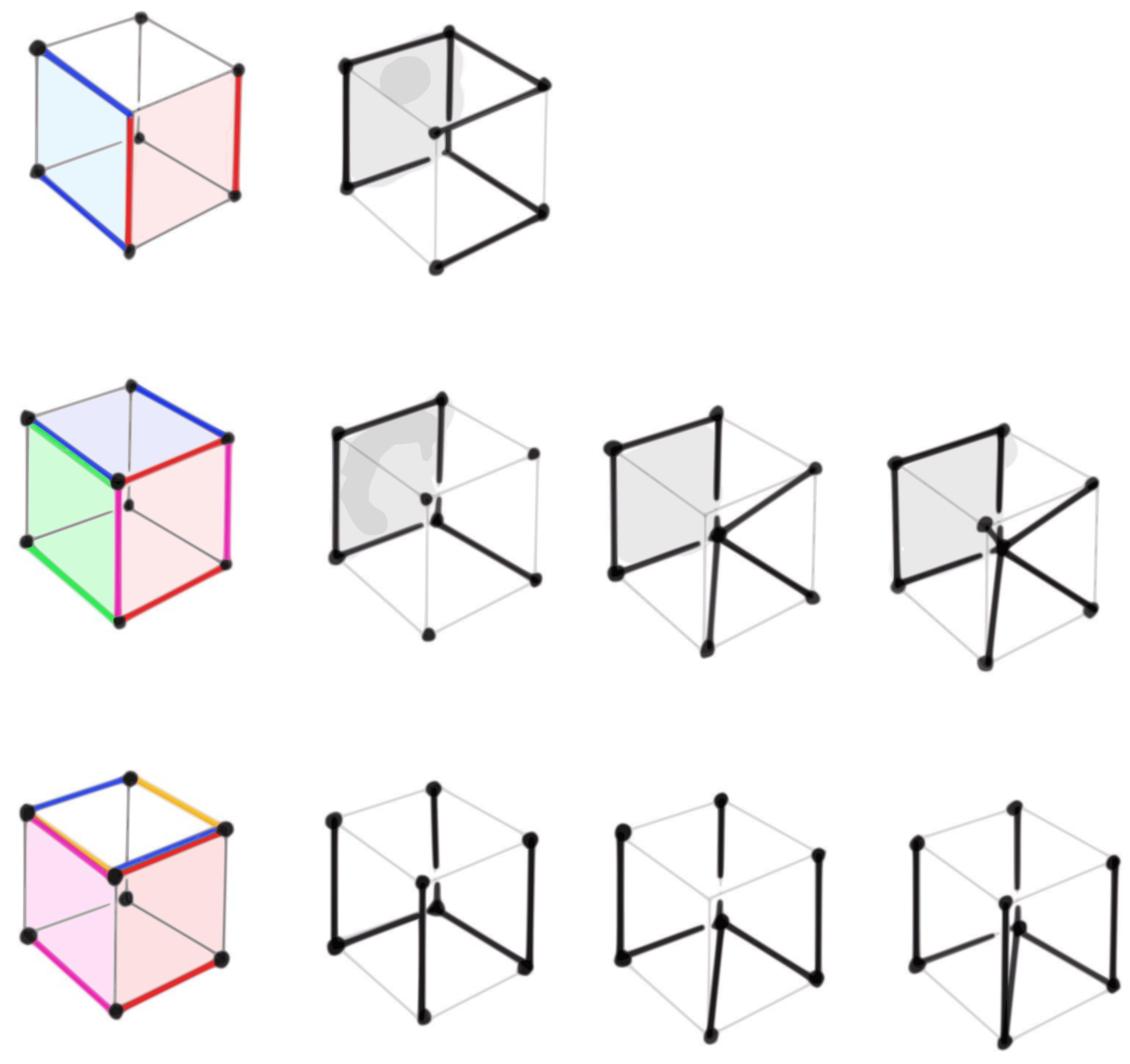}
 \put(9,-1){$c$}
 \put(9,32){$c$}
 \put(9,67){$c$}
 \put(27,66){$\mathcal D(c)=\mathcal F(c)$}
 \put(35,31){$\mathcal D(c)$}
 \put(56,31){$\mathcal F_0(c)$}
 \put(75,31){$\mathcal F_1(c)=\mathcal F(c)$}
 \put(32,-3){$\mathcal D(c)$}
 \put(48,-3){$\mathcal F_0(c)=\mathcal F_1(c)$}
 \put(84,-3){$\mathcal F(c)$}
 \put(37,84){$h$}
 \put(49,80){$\bar h$}
 \put(40,14){$h$}
 \put(34,3){$\bar h$}
\end{overpic}
\caption{Three examples of the construction of $\mathcal F(c)$ from a set of panels on $c$, in the case $\dimension c=3$.  In 
each case, the faces of $c$ belonging to panels in $\mathcal P$ are shaded, and the internal $1$--cubes of $c$ are bold.  The 
remaining pictures show how $\mathcal D(c)$, $\mathcal F_0(c)$, $\mathcal F_1(c)$, and $\mathcal F(c)$ sit inside $c$.  The 
$1$--cubes of $\mathcal F(c)$ (which are either $1$--cubes of $c$ or diagonals in $c$ or its faces) are bold.  In the first 
and last cases, $h,\bar h$ are labelled; in the middle case, $h$ is the unique $0$--cube with $3$ incident external 
$1$--cubes, and $\bar h$ is diagonally opposite $h$ in $c$.}\label{fig:fundamentz}
\end{figure}

\subsection{Basic properties of the fundament}\label{subsubsec:basic_properties}
Before proceeding to a more technical analysis of $\mathcal F(c)$, we collect some important properties that 
capture most of the essence of why $\mathcal F(c)$ has been defined as it has.  

First, by definition, $\mathcal F(c)$ contains every $0$--cube of $c$, since $0$--cubes are completely 
external and $\mathcal D(c)\subseteq\mathcal F(c)$.

Second, $\mathcal F(c)$ is ``completely external'' in the following sense:

\begin{lem}\label{lem:no_interior}
For each cube $c$ and each $P\in\mathcal P$, we have $\mathcal 
F(c)\cap\interior{P}=\emptyset$.
\end{lem}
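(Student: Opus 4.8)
The plan is to induct on $\dimension c$, after reformulating membership in $\interior P$ cube-by-cube. Fix $P\in\mathcal P$ with abutting hyperplane $H$ and extremaliser $E$, write $K=E\cap H$, so that $\bur P=\bur K=K\times[-\half,\half]\times[-\half,\half]$, $P=K\times[-\half,\half]\times\{\half\}$, and $\interior P=K\times(-\half,\half)\times\{\half\}$, with the last coordinate dual to $E$ and the middle one dual to $H$. For $x\in\Psi$ let $\hat d(x)$ be the unique cube with $x$ in its interior. Unwinding the definition of the inside of a panel, one gets: $x\in\interior P$ exactly when $\hat d(x)\subseteq P$ and $H$ crosses $\hat d(x)$; and in that event $\hat d(x)=d_K\times[-\half,\half]\times\{\half\}$ for a cube $d_K$ of $K$, so $E$ does not cross $\hat d(x)$ and $\hat d(x)$ lies in $E\times\{\half\}$, the copy of $E$ bounding $N(E)$ on its positive side. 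Two consequences I would record: (i) if $x$ lies in a completely external cube, then $\hat d(x)$ is a face of it, hence completely external, so it has no internal $1$--cube and $x\notin\interior P$; in particular (ii) $\mathcal D(c)\cap\interior P=\emptyset$ for every cube $c$.

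By (ii), the lemma holds whenever $\mathcal F(c)$ is a union of completely external cubes of $c$, i.e.\ whenever $c$ is internal, or external with $\mathcal D(c)$ connected, or completely external (Definitions~\ref{defn:internal},~\ref{defn:external_connected}). So I would reduce to the one remaining case: $c$ external, not completely external, $\mathcal D(c)$ disconnected. There, by construction, $\mathcal F(c)=\bigcup_{c'\in\faces'(c)}\mathcal F(c')\cup\bigcup_w\mathcal S(w)\cup(\text{additional completely external cubes of }c)$, with $c'$ ranging over the codimension--$1$ faces of $c$ containing a persistent corner and $w$ over the completely external subcubes of $\bar h=\bar h(c)$. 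The additional completely external cubes are handled by (i); and since each such $c'$ has $\dimension c'<\dimension c$, the inductive hypothesis gives $\mathcal F(c')\cap\interior P=\emptyset$. Everything then comes down to showing $\mathcal S(w)\cap\interior P=\emptyset$.

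So suppose $x\in\mathcal S(w)\cap\interior P$ and put $\hat d=\hat d(x)$. By Lemma~\ref{lema:all_persistent} the persistent subcube $h=h(c)$ is completely external, hence so are $w$ and its parallel copy $\bar w\subseteq h$; by (i), $x\notin w\cup\bar w$, so $x$ lies in the diagonal interior of $\mathcal S(w)=w\times[-\frac{\sqrt{\kappa}}{2},\frac{\sqrt{\kappa}}{2}]$, which forces $\hat d$ to cross every one of the hyperplanes $H_1,\dots,H_\kappa$ separating $h$ from $\bar h$. On the other hand $\hat d\subseteq P$, so $E$ does not cross $\hat d$; hence $E\notin\{H_1,\dots,H_\kappa\}$. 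Moreover $\hat d$, and so $c$, contains a $1$--cube internal to $P$, namely one dual to $H$; since $c$ is external, Lemma~\ref{lem:how_panels_intersect_cubes} together with extremality of $P$ makes $c\subseteq\bur P$ and $c\cap P$ a codimension--$1$ face of $c$, and from $c\cap P\subseteq P\subseteq E\times\{\half\}$ while $c\not\subseteq P$ (else $c$ would be internal) I would conclude that $E$ crosses $c$, with $c\cap P$ the face of $c$ on the positive side of $E$ and its opposite face $b_P$ on the negative side. By Remark~\ref{rem:equiv_char} (recall $h=\bigcap_{P'}b_{P'}$), $h\subseteq b_P$, so $E$ crosses neither $h$ nor its parallel copy $\bar h$; as $E$ does not separate them either, $h$ and $\bar h$ — and therefore $w$ and $\bar w$ — all lie in the negative $E$--halfspace, hence so does their $\ell_2$--convex hull $\mathcal S(w)$. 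But $x\in\hat d\subseteq c\cap P$ lies on the positive side of $E$: contradiction.

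The hard part is this last step: recognising that for any panel whose inside could meet a diagonal $\mathcal S(w)$, its extremalising hyperplane can neither cross nor separate the persistent and salient subcubes, which pins the entire diagonal to the side of $E$ away from $P$. Everything else is bookkeeping with the definition of $\interior P$ and with the inductive structure of $\mathcal F(c)$.
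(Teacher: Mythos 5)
Your proof is correct, and for the crucial step it takes a genuinely different route from the paper's.  Both arguments share the same reduction: $\mathcal D(c)$ (and any completely external cube entering $\mathcal F(c)$) is disjoint from $\interior{P}$ essentially by definition, and the codimension--$1$ pieces $\mathcal F(c')$ are handled by induction on dimension, so the only thing to check is that each diagonal piece $\mathcal S(w)$ avoids $\interior{P}$.  At that point the paper's proof is intrinsic to the cube $\hat d(x)$: any cube of $\Psi$ whose interior meets the diagonal part of $\mathcal S(w)$ contains a $0$--cube of $\bar w\subset h$, hence a persistent corner by Lemma~\ref{lema:all_persistent}, hence is external and cannot be internal to any panel.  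Your argument is relational, by contradiction against a hypothetical offending $P$: using Remark~\ref{rem:equiv_char} you put $h$ inside $b_P$, note that the extremaliser $E$ cannot be one of the $\kappa$ hyperplanes separating $h,\bar h$ (those all cross $\hat d(x)\subset P\subset N(E)$), and conclude that $h\cup\bar h$ — and hence the $\ell_2$--convex hull $\mathcal S(w)$ — sits on the $E$--side opposite $c\cap P$, contradicting $x\in\hat d(x)\subset c\cap P$.  Both are valid; yours is longer but fully explicit, whereas the paper's three-sentence proof leaves the ``contains a persistent corner'' observation implicit.  One remark: you could have stopped much earlier.  Once you know that $\hat d(x)$ crosses all $\kappa$ hyperplanes separating $h$ from $\bar h$, it already contains a $0$--cube of $\bar w\subset h$, i.e.\ a persistent corner, which immediately forces $\hat d(x)$ to be external and gives the contradiction without ever invoking $E$, $b_P$, or Remark~\ref{rem:equiv_char}.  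That shortcut is exactly the paper's argument.
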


\begin{proof}
By definition, $\mathcal D(c)$ is disjoint from $\interior{P}$.  On the other 
hand, any cube of $\Psi$ containing some $\mathcal S(w)$ is external.
Thus the 
lemma follows from the definition of $\mathcal F(c)$.
\end{proof}

Third, each $\mathcal F(c)$ was defined in terms of $\mathcal P$ (and the set 
of $1$--cubes internal to panels in $\mathcal P$), from which we immediately get:

\begin{lem}\label{lem:canonical}
For each cube $c$, the subspace $\mathcal F(c)$ is uniquely determined by the set of internal $1$--cubes of 
$c$.  In particular, if $g\in\Aut(\Psi)$ preserves $\mathcal P$ (taking insides to insides), then $\mathcal 
F(gc)=g\mathcal F(c)$ for all cubes $c$.
\end{lem}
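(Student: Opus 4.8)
The plan is to prove, by induction on $\dimension c$, the following precise form of the assertion: the subspace $\mathcal F(c)\subseteq c$ produced by the construction depends only on the combinatorial–metric structure of the cube $c$ together with the set $\mathcal I(c)$ of $1$--cubes of $c$ that are internal, and moreover it does so \emph{naturally}: if $\phi\co c\to \tilde c$ is a cubical isometry between cubes of $\Psi$ with $\phi(\mathcal I(c))=\mathcal I(\tilde c)$, then $\phi(\mathcal F(c))=\mathcal F(\tilde c)$. Granting this, the final sentence is immediate: if $g\in\Aut(\Psi)$ preserves $\mathcal P$ and takes insides to insides, then for every $1$--cube $e$ of $c$ and every $P\in\mathcal P$ one has $e\subseteq\interior P$ if and only if $ge\subseteq\interior{gP}$, so $g$ restricts to a cubical isometry $c\to gc$ carrying $\mathcal I(c)$ bijectively onto $\mathcal I(gc)$; applying naturality with $\phi=g|_c$ gives $g\mathcal F(c)=\mathcal F(gc)$.

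For the induction, I would walk through the definition of $\mathcal F(c)$ and check each ingredient against $\mathcal I(c)$. By Definition~\ref{defn:internal_external} a subcube of $c$ is completely external exactly when it contains no $1$--cube belonging to $\mathcal I(c)$, so $\mathcal D(c)$ (the union of the completely external subcubes) — and hence whether $\mathcal D(c)$ is connected — is determined by $\mathcal I(c)$. Since externality and complete externality coincide for $1$--cubes, a $1$--cube of $c$ is external if and only if it is not in $\mathcal I(c)$; therefore the persistent corners of $c$ (the $0$--cubes incident to $\dimension c$ external $1$--cubes), the persistent subcube $h(c)$ (their $\ell_1$--convex hull), the set $\faces'(c)$ of codimension--$1$ faces of $c$ containing a persistent corner, and the set of hyperplanes of $c$ dual to some internal $1$--cube are all read off from $\mathcal I(c)$. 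Consequently the salient subcube $\bar h(c)$, which is determined by which hyperplanes of $c$ separate it from $h(c)$, and each $\mathcal S(w)$, which is the $\ell_2$--convex hull of $w$ and its parallel copy $\bar w\subseteq h(c)$, are also determined by $\mathcal I(c)$. Finally, internality of a $1$--cube $e$ is an intrinsic property of $e$ (it asks only whether the open $1$--cube $e$ lies in $\interior P$ for some $P\in\mathcal P$), so for a proper face $c'$ of $c$ one has $\mathcal I(c')=\{e\in\mathcal I(c):e\subseteq c'\}$; by the inductive hypothesis $\mathcal F(c')$ is then determined (naturally) by this data. Every operation used in the definition — unions, $\ell_1$-- and $\ell_2$--convex hulls inside a cube, products of subcubes, passage to the opposite or to a prescribed parallel face — visibly commutes with cubical isometries, which is what delivers the naturality clause alongside the bare statement.

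It remains to know which clause of the definition applies, and the one point requiring an argument is that whether $c$ is internal or external is visible in $\mathcal I(c)$. I claim $c$ is external if and only if $c$ has a persistent corner. One direction is Lemma~\ref{lem:cube_intersect}. For the converse, if $c$ is internal to $P\in\mathcal P$ with abutting hyperplane $H$, then by Lemma~\ref{lem:kill_whole_class} every $1$--cube of $c$ dual to $H$ is internal to $P$; since $H$ crosses $c$, every $0$--cube of $c$ is incident to such a $1$--cube and hence is not a persistent corner. (The same observation shows that $\mathcal D(c)$ is disconnected whenever $c$ is internal — every $\mathcal D(c)$--path between the two sides of $H$ would have to use a $1$--cube dual to $H$ — which is why the clauses ``$c$ internal'' and ``$c$ external with $\mathcal D(c)$ connected'' never overlap.) Thus: if $\mathcal D(c)$ is connected then $c$ is external and $\mathcal F(c)=\mathcal D(c)$; if $\mathcal D(c)$ is disconnected and $c$ has no persistent corner then $c$ is internal and again $\mathcal F(c)=\mathcal D(c)$; and if $\mathcal D(c)$ is disconnected and $c$ has a persistent corner then $\mathcal F(c)=\mathcal F_1(c)\cup\{\text{completely external cubes of }c\}$, where $\mathcal F_1(c)=\bigl(\bigcup_{c'\in\faces'(c)}\mathcal F(c')\bigr)\cup\bigl(\bigcup_{w}\mathcal S(w)\bigr)$ with $w$ ranging over the completely external subcubes of $\bar h(c)$ (the sub-analysis in the definition rules out $\dimension c-\dimension h(c)\le 1$ in this clause, using only $\mathcal I(c)$--data). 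In every case $\mathcal F(c)$ has been exhibited as a function of $\mathcal I(c)$, and naturally so, completing the induction. The main — indeed essentially the only — obstacle is this last piece of bookkeeping, particularly the characterisation of internal cubes via persistent corners; the rest is a matter of verifying that no step of the construction secretly uses more than the set of internal $1$--cubes.
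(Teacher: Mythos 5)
Your proof is correct and matches the paper's approach: the paper simply remarks that the lemma is immediate because each step of the construction of $\mathcal F(c)$ is phrased in terms of the set of internal $1$--cubes, and you have supplied the (routine but useful) verification that every ingredient — persistent corners, $h(c)$, $\bar h(c)$, $\faces'(c)$, the $\mathcal S(w)$, and crucially which clause of the definition applies (via the observation that $c$ is internal iff some hyperplane of $c$ has all dual $1$--cubes internal, equivalently iff $c$ has no persistent corner) — is indeed determined by $\mathcal I(c)$ and commutes with cubical isometries. Nothing is missing.
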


Finally, the fundament has been defined so as to be consistent under passing to subcubes:

\begin{lem}[Fundaments of subcubes]\label{lem:compat}
Let $c$ be an external cube.  Then both of the following hold:  
\begin{enumerate}
 \item For each external sub-cube 
$c'$ of $c$, $$\mathcal F(c)\cap c'=\mathcal F(c').$$\label{item:general}
\item If $c'$ is a codimension--$1$ external face of 
$c$, then $c'\in\faces'(c)$.\label{item:codim_1} 
\end{enumerate}
\end{lem}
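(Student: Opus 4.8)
The plan is to prove both parts by induction on $\dimension c$, handling part (2) first since its proof does not use the inductive hypothesis.

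For part (2) we may assume $c$ is external but not completely external, so that $\faces'(c)$ is defined and the set $\mathcal I$ of panels of $\mathcal P$ contributing an internal $1$--cube to $c$ is nonempty. Suppose $c'$ is a codimension--$1$ external face of $c$ containing no persistent corner, so that $c'\notin\faces'(c)$; by minimality of $h=h(c)$ this forces $h\cap c'=\emptyset$, hence $h$ lies in the face $c''$ of $c$ opposite $c'$, and we let $D$ be the hyperplane of $c$ separating $c'$ from $c''$. By the proof of Lemma~\ref{lem:cube_intersect} together with Lemma~\ref{lema:all_persistent}, $h=\bigcap_{P\in\mathcal I}b_P$ where $b_P$ is the face of $c$ opposite $f_P=P\cap c$; since $c$ is external, each $b_P$ is obtained from $c$ by fixing the coordinate dual to the extremalising hyperplane of $P$. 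As $h\neq\emptyset$ is cut out of $c$ by fixing precisely these coordinates, $h\cap c'=\emptyset$ forces $D$ to be the extremalising hyperplane of some $P\in\mathcal I$, so $b_P=c''$ and therefore $f_P=c'$. But $P\in\mathcal I$ contributes an internal $1$--cube to $c$, so by Lemma~\ref{lem:how_panels_intersect_cubes} every $1$--cube of $f_P=c'$ dual to the abutting hyperplane of $P$ is internal to $P$, and the converse half of Lemma~\ref{lem:kill_whole_class} then shows the interior of $c'$ lies in the inside of $P$, contradicting that $c'$ is external. Combined with Lemma~\ref{claim:opposite} (which gives the reverse: a codimension--$1$ face through a persistent corner is a product of external $1$--cubes, hence external), this identifies $\faces'(c)$ with the set of external codimension--$1$ faces of $c$.

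For part (1) we induct on $\dimension c$; the cases $\dimension c\le 1$ and $c'=c$ are immediate, so assume $\dimension c\ge 2$, $c'\subsetneq c$, and that (1) holds in lower dimension. If $\mathcal D(c)$ is connected, then $\mathcal F(c)=\mathcal D(c)$; by Lemma~\ref{lem:connected_hereditary} also $\mathcal D(c')$ is connected, so $\mathcal F(c')=\mathcal D(c')$, and the identity $\mathcal D(c)\cap c'=\mathcal D(c')$ (recorded just after the definition of $\mathcal D$) settles this case. If $\mathcal D(c)$ is disconnected, then $\dimension c-\dimension h\ge 2$ and $\mathcal F(c)=\mathcal D(c)\cup\bigcup_{c''\in\faces'(c)}\mathcal F(c'')\cup\bigcup_{w}\mathcal S(w)$, the last union over completely external subcubes $w$ of $\bar h=\bar h(c)$; we intersect each summand with $c'$. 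The $\mathcal D(c)$--summand meets $c'$ in exactly $\mathcal D(c')$. For the $\mathcal F_0(c)$--summand, each $c''\in\faces'(c)$ has dimension $<\dimension c$, so by induction $\mathcal F(c'')\cap c'=\mathcal F(c''\cap c')$, with $c''\cap c'$ external by Lemma~\ref{lem:external_cube_intersections}; applying the inductive hypothesis also to $c'$ gives $\mathcal F(c''\cap c')=\mathcal F(c')\cap(c''\cap c')\subseteq\mathcal F(c')$, so $\mathcal F_0(c)\cap c'\subseteq\mathcal F(c')$. Unfolding the recursion defining $\mathcal F(c')$ into completely external subcubes of $c'$ (which lie in $\mathcal D(c)\subseteq\mathcal F(c)$) and diagonal boxes reduces both the reverse inclusion $\mathcal F(c')\subseteq\mathcal F(c)\cap c'$ and the remaining containment $\bigl(\bigcup_w\mathcal S(w)\bigr)\cap c'\subseteq\mathcal F(c')$ to a single question: understanding the intersection of a diagonal box with $c'$. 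For this one shows that the hyperplanes separating $h(c)$ from $\bar h(c)$ restrict to $c'$ compatibly, so that $h(c)\cap c'\subseteq h(c')$ and the salient subcube of $c'$ refines the trace on $c'$ of $\bar h(c)$; hence each $\mathcal S(w)\cap c'$ is either empty or a diagonal box appearing in the construction of $\mathcal F(c')$, and each diagonal box of $\mathcal F(c')$ sits inside a diagonal box of $\mathcal F(c)$. Assembling the summand computations yields $\mathcal F(c)\cap c'=\mathcal F(c')$.

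The step I expect to be the main obstacle is precisely the analysis of the diagonal boxes $\mathcal S(w)$: unlike $\mathcal D(c)$ and the $\mathcal F_0$--pieces these are not subcomplexes of $c$ but $\ell_2$--convex boxes running diagonally across it, so ``restricting to $c'$'' forces a careful comparison of the persistent and salient subcubes $h(c),\bar h(c)$ with $h(c'),\bar h(c')$, and in particular one must rule out $c'$ slicing a diagonal of $\mathcal F(c)$ transversally into a subspace not already built into $\mathcal F(c')$. The no facing panels property, via Lemma~\ref{lem:no_facing} (which is what makes $h(c)$ and $\bar h(c)$ well-defined in the first place), is exactly what excludes this.
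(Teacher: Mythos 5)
Your proof of part~\eqref{item:codim_1} is correct, and it takes a slightly different route from the paper's: you use the characterisation $h(c)=\bigcap_P b_P$ from Remark~\ref{rem:equiv_char} together with Lemma~\ref{lema:all_persistent} to force $D$ to be the extremalising hyperplane of some $P$ and then derive that $c'=f_P$ is internal, whereas the paper instead produces a single internal $1$--cube incident to a $0$--cube adjacent to a persistent corner and applies Lemma~\ref{lem:how_panels_intersect_cubes}. Both work; yours is arguably more structural.

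Your proof of part~\eqref{item:general} has the right high-level shape (induction on dimension, splitting on connectedness of $\mathcal D(c)$, decomposing $\mathcal F(c)$ into $\mathcal D(c)\cup\mathcal F_0(c)\cup\bigcup_w\mathcal S(w)$), and your treatment of the $\mathcal D(c)$ and $\mathcal F_0(c)$ summands is fine. But there are two genuine problems with your handling of the diagonal pieces. First, your claim that ``each diagonal box of $\mathcal F(c')$ sits inside a diagonal box of $\mathcal F(c)$'' is false: in the middle example of Figure~\ref{fig:fundamentz}, $h(c)$ and $\bar h(c)$ are diagonally opposite $0$--cubes of a $3$--cube and $\mathcal S(\bar h(c))$ is the long body diagonal, while for an external codimension--$1$ face $c'$ through $h(c)$ the diagonal $\mathcal S(\bar h(c'))$ is a face diagonal, which is not contained in the body diagonal. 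The correct route to the reverse inclusion $\mathcal F(c')\subseteq\mathcal F(c)\cap c'$ does not go through diagonal boxes at all: once you know $c'\in\faces'(c)$ (your part~\eqref{item:codim_1}, for codimension one), you get $\mathcal F(c')\subseteq\mathcal F_0(c)\subseteq\mathcal F(c)$ for free from the recursive definition, and for higher codimension you either pass through some $l\in\faces'(c)$ containing $c'$ or observe that $c'$ is one of the completely external cubes added to $\mathcal F_1(c)$. Second, your forward containment $\bigl(\bigcup_w\mathcal S(w)\bigr)\cap c'\subseteq\mathcal F(c')$ requires exactly the content of Lemma~\ref{lem:overlap}, but that lemma is stated and proved only for $c'\in\faces'(c)$ (codimension one). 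By attacking a general external subcube $c'$ in one step, you need a more general version of that lemma than the paper proves, and you gesture at it (``one shows that the hyperplanes separating $h(c)$ from $\bar h(c)$ restrict to $c'$ compatibly'') rather than proving it. The paper sidesteps this by first nailing down the codimension--$1$ case and then bootstrapping via $\mathcal F(c)\cap c'=\mathcal F(c)\cap l\cap c'=\mathcal F(l)\cap c'=\mathcal F(c')$ for some $l\in\faces'(c)$, which keeps all applications of Lemma~\ref{lem:overlap} in the codimension--$1$ setting where it is actually established.
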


(Recall that $\faces'(c)$ denotes the set of codimension--$1$ faces of $c$ that contain persistent corners of 
$c$.)

 Lemma~\ref{lem:compat} has the following important consequence.  Let $b,c$ be 
external cubes.  If $b\cap c$ is external, then by the lemma, $\mathcal 
F(c)\cap b=\mathcal F(c)\cap(b\cap c)=\mathcal F(b\cap c)$, and the same is 
true with the roles of $b$ and $c$ reversed.  Hence $\mathcal F(c)\cap\mathcal 
F(b)=\mathcal F(b\cap c)$.

\begin{proof}[Proof of Lemma~\ref{lem:compat}]
We will prove the second assertion first, and use it in the proof of the first assertion.

\textbf{Proof of assertion~\eqref{item:codim_1}:}  Suppose that 
$c'\not\in\faces'(c)$ and $c'$ has codimension $1$.   Since 
$\faces'(c)$ contains a maximal collection of pairwise-intersecting 
codimension--$1$ sub-cubes of $c$ (namely, all the faces containing any given 
persistent corner $v_i$), there exists $\bar c'\in\faces'(c)$ so that $\bar c'$ 
and $c'$ are 
parallel.  Now, if some persistent corner $v_i\in c'$, then $c'\in\faces'(c)$, 
a contradiction.  
So each persistent corner $v_i\in\bar c'$, whence $h=h(c)\subset\bar c'$.

Let $v'\in c'$ be a $0$--cube connected by a $1$--cube to some $v_i$ (this 
exists since $c'$ is codimension--$1$).  The $1$--cube joining $v_i,v'$ is 
external, because $v_i$ is a persistent corner.  Hence, since 
$c'\not\in\faces'(c)$, there exists a $1$--cube $e$ of $c'$ that is incident to 
$v'$ and internal to some $P\in\mathcal P$.

Lemma~\ref{lem:how_panels_intersect_cubes} provides a codimension--$1$ face 
$c''$ of $c$ so that $e\subset c''$ and every $1$--cube of $c''$ parallel to 
$e$ 
is internal to $P$.  Now, $c''$ cannot contain the $1$--cube $\bar e$ parallel 
to $e$ and incident to $v_i$, since $v_i$ is a persistent corner.  Hence $c''$ 
is 
disjoint from $\bar c'$.  (Because any codimension--$1$ face containing $e$ and 
intersecting $\bar c'$ must contain $\bar e$.)  Thus $c''=c'$, so $c'$ is 
internal, a contradiction.  Thus $c'\in\faces'(c)$.

\textbf{Proof of assertion~\eqref{item:general}:}  We first prove the claim in the case
where $c'$ is codimension--$1$. If $c$ is completely external, then so
is $c'$, so $\mathcal F(c)\cap c'=c\cap c'=c'=\mathcal F(c')$.

Now suppose that $c$ is not completely external.  If $\mathcal D(c)$ is 
connected, then by definition, $\mathcal F(c)=\mathcal D(c)$.  
Lemma~\ref{lem:connected_hereditary}  implies 
that $\mathcal D(c')$ is connected, so $\mathcal F(c')=\mathcal D(c')$.  It 
follows immediately from the definition that $\mathcal D(c')=\mathcal D(c)\cap 
c'$, so $\mathcal F(c)\cap c'=\mathcal F(c')$.

It remains to consider the case where $\mathcal D(c)$ is disconnected.  By the 
first part of the lemma, $c'\in\faces'(c)$.  By definition, $\mathcal 
F(c)\cap 
c'\supseteq\mathcal F(c')$.  Lemma~\ref{lem:overlap} below implies that for 
any 
completely external cube $w$ in $\bar h=\bar h(c)$ so that $\mathcal S(w)$ intersects 
$c'$, we have $\mathcal S(w)\cap c'\subseteq \mathcal S(w\cap c')\subseteq\mathcal F(c')$.  Now, 
either $\mathcal F(c)\cap c'=\mathcal D(c)\cap c'=\mathcal D(c')$ or $\mathcal 
F(c)\cap c'=(\mathcal D(c)\cap c')\cup(\bigcup_w\mathcal S(w)\cap c')$, so in either 
case $\mathcal F(c)\cap c'\subseteq\mathcal F(c')$, as required.

Now we complete the proof by arguing by induction on codimension $k=\dimension c-\dimension 
c'$.  The case $k=1$ was done above.  Let $k\geq1$.  If $c'$ is not contained 
in some element of $\faces'(c)$, then $c'$ is a completely external
cube that gets added in passing from $\mathcal F_1(c)$ to $\mathcal
F(c)$.

Indeed, $\mathcal F(c)$ is the union of $\mathcal F_0(c)$ together
with some completely external cubes and some subspaces of the form
$\interior{\mathcal S(w)}$, and the latter are disjoint from $c'$
unless $c'$ is contained in an element of $\faces'(c)$.  So, if $c'$
is external and does not lie in an element of $\faces'(c)$, then
$\mathcal F(c')=c'\supset\mathcal F(c)\cap c'$.

Otherwise, $c'$ is a subcube of some $l\in\faces'(c)$.  Note 
that $\dimension l-\dimension c'<k$ and $l$ is external, so by induction on 
codimension, $\mathcal F(l)\cap c'=\mathcal F(c')$.  But by the case $k=1$, we 
have $\mathcal F(c)\cap l=\mathcal F(l)$.  So $\mathcal F(c)\cap c'=\mathcal 
F(c)\cap (l\cap c')=\mathcal F(l)\cap c'=\mathcal F(c')$, as required.
\end{proof}

The next lemma supported the previous one:

\begin{lem}\label{lem:overlap}
Let $c$ be an external cube and let $h=h(c),\bar h=\bar h(c)$.  Let 
$w$ be a completely external cube of $\bar h$, and let $c'\in\faces'(c)$.  Then 
$\mathcal S(w)\cap c'\subseteq\mathcal F(c')$. 
\end{lem}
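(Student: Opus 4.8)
The plan is to work in coordinates and run through a short list of cases for the shape of $\mathcal S(w)\cap c'$. Write $n=\dimension c$, identify $c$ with $[-\frac12,\frac12]^{n}$ and its hyperplanes with the coordinate hyperplanes $D_i=\{x_i=0\}$; since $c'\in\faces'(c)$ is a codimension--$1$ face we may assume, after relabelling, that $c'=c\cap\{x_n=\frac12\}$, so $D_n$ is the unique hyperplane of $c$ not meeting $c'$. Recall that $h=h(c)$ and $\bar h=\bar h(c)$ are parallel sub-cubes of $c$ separated by the set $J$ of those hyperplanes of $c$ that miss $h$ and are dual to an internal $1$--cube of $c$, and that $\mathcal S(w)$ is the $\ell_2$--convex hull of $w$ and its parallel copy $\bar w\subseteq h$; explicitly $\mathcal S(w)=w\times\left[-\frac{\sqrt{|J|}}{2},\frac{\sqrt{|J|}}{2}\right]$, the interval being the long diagonal of the sub-cube of $c$ spanned by the directions in $J$.

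First I would split according to how the $x_n$--direction sits inside $\mathcal S(w)$. If $D_n\in J$, then $\mathcal S(w)\cap c'$ is one of the two endpoints of the diagonal, so it equals either $w$ or $\bar w$: in the former case $w$ is completely external and contained in $c'$, hence $w\subseteq\mathcal D(c')\subseteq\mathcal F(c')$; in the latter case $\bar w$ is a sub-cube of $h(c)\cap c'$, and every $0$--cube of $h(c)\cap c'$ is a persistent corner of $c$ lying in $c'$, hence (all of its incident $1$--cubes inside $c'$ being incident $1$--cubes of $c$, hence external) a persistent corner of $c'$, so $\bar w\subseteq h(c')\subseteq\mathcal F(c')$, using the inductive description of $\mathcal F(c')$. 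If $x_n$ is constant on $\mathcal S(w)$, then $\mathcal S(w)\cap c'$ is empty (if the constant is not $\frac12$) or all of $\mathcal S(w)$.

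It remains to treat the case $D_n\notin J$ with $\mathcal S(w)\cap c'$ equal either to all of $\mathcal S(w)$ or to the codimension--$1$ face of $\mathcal S(w)$ obtained by pinning $x_n=\frac12$; setting $w^{*}=w\cap c'$ (so $w^{*}=w$ in the first sub-case), we have $\mathcal S(w)\cap c'=w^{*}\times\left[\text{diagonal through }J\right]$ sitting inside $c'$, with $w^{*}$ a completely external sub-cube of $c'$. The heart of the matter is to compare the triple $(h,\bar h,J)$ of $c$ with the triple $(h(c'),\bar h(c'),J')$ of $c'$. Two elementary observations drive this: persistent corners of $c$ lying in $c'$ are persistent corners of $c'$, whence $h(c)\cap c'\subseteq h(c')$; and internality of a $1$--cube depends only on the $1$--cube and $\mathcal P$, not on an ambient cube, so a hyperplane of $c'$ dual to an internal $1$--cube of $c'$ extends to a hyperplane of $c$ dual to an internal $1$--cube of $c$. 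From these I would deduce that $J'$ consists of restrictions to $c'$ of hyperplanes in $J$, that any direction of $J$ not surviving in $J'$ corresponds to a panel-face of $c$ disjoint from $c'$ (so that $h,\bar h\subseteq c'$ and that direction is completely external in $c'$), that $w^{*}\subseteq\bar h(c')$, and finally that the segment $\mathcal S(w)\cap c'$ embeds into $\mathcal S(w^{*})$ computed inside $c'$ up to the harmless extra factors just described --- giving $\mathcal S(w)\cap c'\subseteq\mathcal S(w^{*})\cup\mathcal D(c')\subseteq\mathcal F(c')$.

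The step I expect to be the main obstacle is precisely this last comparison: the salient sub-cube $\bar h(c')$ need not equal $\bar h(c)\cap c'$, because a hyperplane that separates $h$ from $\bar h$ inside $c$ can fail to separate $h(c')$ from $\bar h(c')$ inside $c'$. What one expects to save the argument is that this can only happen along a panel-face of $c$ that misses $c'$ altogether, and in that situation the corresponding freedom of $\mathcal S(w)\cap c'$ moves it through completely external cubes of $c'$; carefully tracking which directions of $\mathcal S(w)\cap c'$ land in $\mathcal D(c')$ and which land in genuine diagonals $\mathcal S(\cdot)$ of $\mathcal F(c')$, and using the no facing panels property to exclude the one configuration --- two distinct panels meeting a common cube of $c'$ in opposite codimension--$1$ faces --- that would otherwise break the inclusion, is the delicate part.
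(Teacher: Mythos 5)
Your proposal follows essentially the same road map as the paper's proof --- compare $(h(c),\bar h(c))$ and the separating set $J$ to their analogues $(h(c'),\bar h(c'))$ and $J'$ inside $c'$, using that persistent corners of $c$ lying in $c'$ are persistent corners of $c'$ and that internality of $1$--cubes is an intrinsic notion --- and the two easy cases (the intersection is an endpoint of the diagonal, or the intersection is empty/everything) are handled correctly. The paper instead organises its case split around whether $\bar h(c)\cap c'$ is empty, dispatching the empty case by examining the hyperplane $D_n$ parallel to $c'$, but the content is similar.

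The problem is that in the remaining case you do not actually finish the argument. You correctly observe that the dangerous phenomenon is $\bar h(c')\neq\bar h(c)\cap c'$ (equivalently $J'\subsetneq J\!\restriction_{c'}$), which can genuinely happen when a panel $Q$ with abutting hyperplane $D\in J$ meets $c$ only in the codimension--$1$ face $c''$ opposite $c'$, so that $D\cap c'$ is dual to no internal $1$--cube of $c'$. But after isolating this as the crux, you write that ``carefully tracking which directions \dots land in $\mathcal D(c')$ and which land in genuine diagonals \dots is the delicate part,'' and you stop there. That sentence is a promissory note, not a proof. Indeed one of the intermediate assertions you list under ``From these I would deduce'' --- namely $w^{*}\subseteq\bar h(c')$ --- is precisely what fails in the scenario you yourself flag (if $J'\subsetneq J\!\restriction_{c'}$ then $w\cap c'$ sits in $\bar h(c)\cap c'$, not in $\bar h(c')$), so the deduction as stated is inconsistent. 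What is actually needed, and what you gesture at but do not prove, is an argument that every direction in $J\setminus J'$ which survives in $c'$ is completely external there, together with a verification that the prism $(w\cap c')\times[\text{diagonal through }J\!\restriction_{c'}]$ decomposes as a union of cubes each lying in $\mathcal D(c')$ or in some $\mathcal S(w')$ with $w'$ a completely external subcube of $\bar h(c')$. Until that step is written out, the lemma is not established.

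A smaller point: in your first case ($D_n\in J$), the sub-case ``$\mathcal S(w)\cap c'=w$'' is vacuous. Since $c'\in\faces'(c)$ it contains a persistent corner, hence meets $h(c)$; if $D_n\in J$ then $D_n$ separates $h$ from $\bar h$, so $c'$ lies on the $h$--side of $D_n$ and $\bar h\cap c'=\emptyset$, forcing $w\cap c'=\emptyset$. Only the $\bar w$ sub-case can occur. This is harmless --- you handle both --- but it suggests the coordinate bookkeeping was not fully tracked.
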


\begin{proof}
Since $c'$ is external, Lemma~\ref{lem:cube_intersect} provides a nonempty 
persistent subcube $h'=h(c')$ of $c'$.  As 
before, let $\bar h'=\bar h(c')$ be the salient subcube of $c'$.  
(Recall that $\bar h'$ is separated from $h'$ by those hyperplanes that do 
not cross $h'$ and which have at least one dual internal $1$--cube in $c'$.)

Note that $h(c)\cap c'\subset h'$ and
$\bar h(c)\cap c'\subset\bar h'$. Note furthermore that $h \cap c'$ is
the projection of $h$ to $c'$. Also, each hyperplane $D'$ in $c'$
separating $h',\bar h'$ extends to a hyperplane $D$ of $c$ separating
$h(c),\bar h(c)$. Suppose this was not the case, since $D'$ is dual to
an internal 1-cube, $D$ must as well. It follows that if $D$ fails to
separate $h,\bar h$ then it must $D$ must intersect $h$, but then its
projection $D'$ must intersect the $h\cap c' \subset h'$, which is a
contradiction.

Now suppose that  $\bar h(c)\cap c'=\emptyset$.  Let $E$ be a hyperplane separating $\bar h(c)$ from $c'$ (so, 
$E$ is the 
unique hyperplane parallel to the codimension--$1$ face $c'$ of $c$).  Since $E$ does not cross $\bar h(c)$, 
and $\bar h(c)$ 
is parallel to $h(c)$, we see that $E$ also does not cross $h(c)$.  Hence, if $E$ is dual to an internal 
$1$--cube, then by 
definition $E$ separates $\bar h(c)$ from $h(c)$, and thus $h(c)\subseteq c'$.  In this case, $\mathcal 
S(w)\cap c'=\bar w$, 
the completely external subcube of $h(c)$ diagonally opposite $w$.  On the other hand, $\bar w\subset h(c)\cap 
c'\subseteq h'\subset\mathcal F(c')$, so the lemma holds in this case.

If $E$ is not dual to an internal $1$--cube, then $\bar h(c)$ and $h(c)$ are not separated by $E$, so 
$h(c)\cap 
c'=\emptyset$.  But $h(c)$ contains all persistent corners of $c$, so $c'$ contains no persistent corners of 
$c$.  
Since $c\in\faces'(c)$, this is a contradiction. 

Otherwise, if $\bar h(c)\cap c'\neq\emptyset$, then $h(c)\cap c'$ 
and 
$\bar h(c)\cap c'$ are diagonally opposite in $c'$ in the above sense, and 
$\mathcal S(w)\cap c'=\mathcal S(w\cap 
c')\subset\mathcal F(c')$.
\end{proof}

\subsection{Relationship between $\mathcal F_1(c)$ and $\mathcal F(c)$}  Our goal is to create a CAT(0) cube 
complex $\Psi_\bullet$ from $\Psi$ by deformation retracting each external cube $c$ to its fundament, by 
induction on dimension.  In the case where $\mathcal D(c)$ is disconnected, this requires an understanding of 
the relationship between $\mathcal F(c)$ and $\mathcal F_1(c)$.  Lemma~\ref{lem:extra_cubes} explains this 
relationship.  In
particular, it gives a precise description
of how $\mathcal F(c)$ is constructed: it states that all new cubes
added to $\mathcal F_1(c)$ are completely external and contain the salient subcube $\bar h$, which must itself be completely 
external in this case (in general, it can happen that $\mathcal F(c)=\mathcal F_1(c)$ and $\bar h$ is not completely 
external).

The main import of the lemma is the final statement about deformation retractions, which will be used in 
Lemma~\ref{lem:external_crush} below (which is where we construct the deformation retraction from $c$ to 
$\mathcal F(c)$).

\begin{lem}[$\mathcal F_1(c)$ versus $\mathcal F(c)$]\label{lem:extra_cubes}
Suppose that $c$ is external.  Then  $\mathcal D(c)\subset\mathcal F(c)$.  If, in addition, $\mathcal D(c)\ne c$, then all 
of the following hold, where $h=h(c)$ and 
let $\bar h=\bar h(c)$:
\begin{enumerate}
     \item \label{item:codim} If $f$ is a completely external cube 
of 
$\mathcal F(c)$ that is not contained in $\mathcal F_0(c)$, then $f$ has 
codimension at least $2$ in $c$.
\item \label{item:bar_h}If $f$ is a maximal completely external subcube of $c$ 
lying in $\mathcal F(c)$, then either $f\subset\mathcal F_1(c)$ or $f$ contains 
$\bar h$.  In the latter case, $\bar h$ must be completely external.

\item\label{item:l_or_equal}  If $\mathcal D(c)$ is disconnected, then either 
$\mathcal F_1(c)=\mathcal F(c)$, or $\bar h$ 
is completely external and $h\cup \bar h$ is contained in a codimension--$1$ face of $c$.  If $\mathcal D(c)$ 
is connected, 
then $h\cup \bar h$ is contained in a codimension--$1$ face of $c$.

\item \label{item:proper_containment_restriction}Let $f_1,\ldots,f_k$ be the 
maximal completely external cubes of $c$ that do not lie in $\mathcal F_1(c)$.  
Then:
\begin{itemize}
 \item $k\le 1$.
 \item $f_1=\bar h\times s$, where $s$ is a subcube of $c$, none of whose 
hyperplanes is dual to an internal $1$--cube of $c$.
 \item There exists a cube $\bar h'\subset f_1$, parallel to $\bar h$, such that $\bar h'$ is not contained in 
any $c'\in\faces'(c)$.  Hence 
$\interior{\bar h'}$ is disjoint from $\bigcup_{c'\in\faces'(c)}c'$.  (Here, 
$\interior{\bar h'}$ is understood to denote $\bar h'$ if $\dimension\bar h'=0$ 
and otherwise denotes the open cube of $c$ whose closure is $\bar h'$.)
\end{itemize}

\end{enumerate}
In particular, if  $\mathcal D(c)$ is disconnected (i.e. $\mathcal 
F(c)\neq\mathcal D(c)$), then there is a strong deformation retraction from 
$\left(\bigcup_{c'\in\faces'(c)}c'\right)\cup\mathcal S\cup\bigcup_if_i$ to 
$\left(\bigcup_{c'\in\faces'(c)}c'\right)\cup\mathcal S$, fixing each $c'$, 
where $\mathcal S$ is the union of the $\mathcal S(w)$ over the completely 
external cubes $w$ of $\bar h$.
\end{lem}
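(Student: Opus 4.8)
The plan is to fix the cube $c$, identify it with $[-\tfrac12,\tfrac12]^d$ where $d=\dim c$, and convert everything into bookkeeping about coordinate directions. Since $c$ is external it is internal to no $P\in\mathcal P$, so by Lemmas~\ref{lem:how_panels_intersect_cubes} and~\ref{lem:convex_panel_consequence} each $P\in\mathcal P$ meeting $c$ meets it in a codimension--$1$ face $f_P=\{x_{i(P)}=-\epsilon_{i(P)}\}$, the abutting hyperplane of $P$ is the coordinate hyperplane $H_{j(P)}$ for a well-defined $j(P)\neq i(P)$, and the $1$--cubes of $f_P$ internal to $P$ are exactly its direction--$j(P)$ edges. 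Set $\mathcal A=\{i(P)\}$ and $\mathcal B=\{j(P)\}$ (the ``face'' and ``abut'' direction sets). Nonemptiness of $h=h(c)$ makes the signs $\epsilon_k$ ($k\in\mathcal A$) well defined; then Remark~\ref{rem:equiv_char} gives $h=\{x_k=\epsilon_k:k\in\mathcal A\}$, the salient cube $\bar h$ is $h$ with its $\mathcal A\cap\mathcal B$--coordinates negated (Definition~\ref{defn:persistent_subcube}), a direction carries an internal $1$--cube iff it lies in $\mathcal B$, and — using Lemma~\ref{lema:all_persistent} (the persistent corners are exactly the $0$--cubes of $h$) together with Lemma~\ref{lem:compat}\eqref{item:codim_1} — $\faces'(c)$ is exactly the set of codimension--$1$ faces of $c$ meeting $h$, equivalently the external codimension--$1$ faces of $c$.

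The inclusion $\mathcal D(c)\subseteq\mathcal F(c)$ is immediate from the definition: if $c$ is completely external or $\mathcal D(c)$ is connected then $\mathcal F(c)=\mathcal D(c)$; otherwise $\dim c-\dim h\geq2$ (the cases $h=c$ and $\dim c-\dim h=1$ are excluded when $\mathcal F$ is built, the latter forcing $h=\bar h$ and $\mathcal D(c)$ connected), and $\mathcal F(c)$ was defined to be $\mathcal F_1(c)$ together with every completely external cube. Item~\eqref{item:codim} is equally quick: a completely external cube of codimension $0$ is $c$, impossible since $\mathcal D(c)\neq c$; a completely external cube of codimension $1$ is an external codimension--$1$ face, hence lies in $\faces'(c)$, and being completely external it equals $\mathcal F$ of itself (Definition~\ref{defn:external_connected}), so it sits in $\mathcal F_0(c)$; thus a completely external cube not in $\mathcal F_0(c)$ has codimension $\geq2$. (Whenever $\mathcal D(c)$ is connected — in particular when $\dim c-\dim h\leq1$ — we read $\mathcal F_0(c)=\mathcal F_1(c)=\mathcal F(c)=\mathcal D(c)$; in that regime item~\eqref{item:bar_h}'s first alternative always holds, item~\eqref{item:proper_containment_restriction} and the final claim are vacuous, and item~\eqref{item:l_or_equal} reduces to the ``connected'' assertion proved at the end of the next paragraph. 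So I analyse the case $\dim c-\dim h\geq2$.)

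The heart of the argument is a structure theorem for a completely external cube $f$ with $f\not\subseteq\mathcal F_1(c)$: \emph{such an $f$ lies in no $c'\in\faces'(c)$, and if it is moreover maximal among completely external cubes not in $\mathcal F_1(c)$, then $\mathcal B\subseteq\mathcal A$, $f=\bar h\times s$ where $s$ spans the directions $\mathcal A\setminus\mathcal B$, and $\bar h$ is completely external.} For the first part: if $f\subseteq c'\in\faces'(c)$ then $f\subseteq\mathcal D(c')\subseteq\mathcal F(c')\subseteq\mathcal F_0(c)\subseteq\mathcal F_1(c)$, a contradiction; by the description of $\faces'(c)$ this forces every direction not crossed by $f$ to lie in $\mathcal A$ and to have $f$--coordinate $-\epsilon_k$. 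Now let $f$ be maximal. If some $k_0\in\mathcal A\setminus\mathcal B$ were not crossed by $f$, a direct check (using $k_0\notin\mathcal B$, so no direction--$k_0$ edge is ever internal, and that for a direction crossed by $f$ the relevant panels have face direction $\neq k_0$, so translating an external edge of $f$ across $H_{k_0}$ keeps it external) shows $f\times e$ is again completely external for a direction--$k_0$ edge $e$, and it is not in $\mathcal F_1(c)$, contradicting maximality; so $f$ crosses all of $\mathcal A\setminus\mathcal B$. If $f$ crossed some $m\in\mathcal B$, then each direction--$m$ edge of $f$ is external, so for every $P$ with $j(P)=m$ the cube $f$ avoids $f_P$; this forces $f$ not to cross $H_{i(P)}$ (else some direction--$m$ edge of $f$ lies in $f_P$), and then the $f$--coordinate in direction $i(P)\in\mathcal A$ equals $+\epsilon_{i(P)}$, contradicting the previous paragraph's conclusion that it is $-\epsilon_{i(P)}$. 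Hence $f$ crosses exactly the directions outside $\mathcal B$ and has each $\mathcal B$--coordinate equal to $-\epsilon_k$; in particular $\mathcal B\subseteq\mathcal A$, $f$ is uniquely determined (so $k\leq1$ in item~\eqref{item:proper_containment_restriction}), $f=\bar h\times s$ with $s$ spanning $\mathcal A\setminus\mathcal B$ (whose hyperplanes meet no internal edge), and since $\mathcal B\subseteq\mathcal A$ the edges of $\bar h$ all lie in directions outside $\mathcal B$, so $\bar h$ is completely external. This gives items~\eqref{item:bar_h} and~\eqref{item:proper_containment_restriction}: $\bar h\subseteq f$, the stated form $f_1=\bar h\times s$, and the face $\bar h'=\bar h\times\{\tau^-\}$ ($\tau^-$ the all--$(-\epsilon)$ vertex of $s$) is parallel to $\bar h$ and lies in no $c'\in\faces'(c)$. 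For item~\eqref{item:l_or_equal}: if $\mathcal F_1(c)\neq\mathcal F(c)$ the above applies and additionally $\mathcal A\neq\mathcal B$ (else $s$ is a point, $f_1=\bar h\subseteq\mathcal S\subseteq\mathcal F_1(c)$), so $\mathcal A\setminus\mathcal B\neq\emptyset$, which is precisely the statement that $h\cup\bar h$ lies in a codimension--$1$ face of $c$; and for the ``connected'' assertion, a direct coordinate check shows that if $\mathcal A\subseteq\mathcal B$ then any edge--path in $\mathcal D(c)$ out of a $0$--cube with all $\mathcal A$--coordinates $-\epsilon$ stays on that face, which is disjoint from $h\subseteq\mathcal D(c)$, so $\mathcal D(c)$ is disconnected; contrapositively $\mathcal D(c)$ connected implies $\mathcal A\not\subseteq\mathcal B$, i.e. $h\cup\bar h$ lies in a codimension--$1$ face.

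Finally, the deformation retraction. If $\mathcal F_1(c)=\mathcal F(c)$ (in particular if $k=0$) take the identity. Otherwise $\bigcup_i f_i=f_1=\bar h\times s$ with $\dim s\geq1$ (since $\dim s=0$ gives $f_1=\bar h\subseteq\mathcal S$). Let $s^+$ be the union of the codimension--$1$ faces of $s$ not containing $\tau^-$. A coordinate computation identifies $f_1\cap\big(\bigcup_{c'\in\faces'(c)}c'\big)$ with $(\partial\bar h\times s)\cup(\bar h\times s^+)$ and shows $f_1\cap\mathcal S$ is contained in it, because $\mathcal S$ lies in each face $\{x_l=\epsilon_l\}$ with $l\in\mathcal A\setminus\mathcal B$, and such faces belong to $\faces'(c)$. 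Hence $f_1\cap\big[\big(\bigcup_{c'\in\faces'(c)}c'\big)\cup\mathcal S\big]=(\partial\bar h\times s)\cup(\bar h\times s^+)$, which is all of $\partial f_1$ except the open ball $\interior{\bar h}\times(\partial s\setminus s^+)$, hence a sub--ball of $\partial f_1$. The ball $f_1$ strong--deformation--retracts onto this sub--ball of its boundary, fixing it pointwise; gluing to the identity on $\big(\bigcup_{c'\in\faces'(c)}c'\big)\cup\mathcal S$ yields the required strong deformation retraction, which therefore fixes each $c'$. I expect the main obstacle to be the structure theorem in the third paragraph, specifically the two exclusions (a maximal completely external $f\not\subseteq\mathcal F_1(c)$ crosses no $\mathcal B$--direction and omits no $(\mathcal A\setminus\mathcal B)$--direction): these pin down the product form $f=\bar h\times s$ and the inclusion $\mathcal B\subseteq\mathcal A$, and the subtlety is that the chains of panel directions one might try to follow can close into cycles — the argument must not trace them, instead using the edge--thickening and the externality of individual edges to reach the contradictions directly.
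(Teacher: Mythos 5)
Your proof is correct, and it takes a genuinely different route from the paper's even though it arrives at the same structural facts. The paper argues intrinsically with hyperplanes and parallelism: assertions \eqref{item:bar_h}--\eqref{item:proper_containment_restriction} are extracted from a chain of claims (Claims~\ref{claim:separate_f_h}, \ref{claim:disjoint_internal}, \ref{claim:proper_face}, \ref{claim:everyone_external}) proved by choosing farthest parallel edges and invoking Lemma~\ref{lem:how_panels_intersect_cubes} via Remark~\ref{rem:codim_1_external}, and the final strong deformation retraction is produced by two rounds of elementary free-face collapses (first of $\bar h'$, then of the remaining faces $d\times s$). You instead coordinatise $c$, record the panel data in the direction sets $\mathcal A$ (face directions) and $\mathcal B$ (abutting directions), and prove one structure theorem -- a maximal completely external cube not in $\mathcal F_1(c)$ is pinned at $-\epsilon$ exactly on the $\mathcal B$--directions and crosses all others, forcing $\mathcal B\subseteq\mathcal A$ -- from which items \eqref{item:codim}--\eqref{item:proper_containment_restriction} follow by inspection: $h\cup\bar h$ lies in a codimension--$1$ face precisely when $\mathcal A\setminus\mathcal B\neq\emptyset$, your two edge-translation exclusions are the coordinate versions of the paper's claims, and your trapping argument on the subcube with all $\mathcal A$--coordinates $-\epsilon$ is essentially Claim~\ref{claim:proper_face} (that subcube is $\bar h$ in that case). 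What your version buys is mechanical checkability and an explicit description of $h$, $\bar h$, $\faces'(c)$ and $f_1=\bar h\times s$; what the paper's buys is coordinate-free statements whose ingredients are reused elsewhere. Two small points to tighten. First, the step ``all of $\partial f_1$ except an open ball, hence a sub-ball'' silently invokes a PL Schoenflies/Newman-type fact; you can avoid it by retracting $f_1$ radially from a point just outside $f_1$ beyond the corner $\bar h\times\{\tau^-\}$ (your $\tau^-$, the all-$(-\epsilon)$ vertex of $s$), i.e.\ with all $s$--coordinates below their minimum and $\bar h$--coordinates at the centre: along any such ray the $s$--coordinates strictly increase, so the exit point lies in $(\partial\bar h\times s)\cup(\bar h\times s^+)$ and that set is fixed pointwise, giving the strong deformation retraction in one step (this replaces the paper's iterated free-face collapses). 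Second, your reading $\mathcal F_0(c)=\mathcal F_1(c)=\mathcal F(c)=\mathcal D(c)$ in the connected case should be flagged as a convention, though it is consistent with the construction and with Lemma~\ref{lem:big_external}, and under it your reduction of the connected regime to the single assertion in \eqref{item:l_or_equal} is fine.
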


\begin{proof}
By definition, $\mathcal D(c)$ is the union of all completely external cubes of 
$c$.  Each such $f\subset c$ either lies in some element of $\faces'(c)$, and 
hence in $\mathcal F_1(c)$ (by the inductive definition), or is added to 
$\mathcal F_1(c)$ when constructing $\mathcal F(c)$.  This proves $\mathcal D(c)\subseteq\mathcal F(c)$ (by construction, 
the containment is an equality if and only if $\mathcal D(c)$ is connected).  

Now suppose $\mathcal D(c)\neq c$.

\textbf{Assertion~\eqref{item:codim}:}  Suppose that the completely external subcube $f$ of $c$ is not contained in any 
$c'\in\faces'(c)$. If $f$ is codimension--$1$, this amounts to saying that $f$ 
has no persistent corner of $c$.  Let $E$ be the hyperplane of $c$ not crossing $f$.  Since $f$ is 
completely external, each of its $0$--cubes is a persistent corner \emph{of $f$}, so each $1$--cube dual to 
$E$ is internal (else $f$ would contain a persistent corner of $c$).  Hence $c$ is internal, a contradiction.  

\textbf{Assertion~\eqref{item:bar_h}:}  Assume that $f$ is a maximal
completely external subcube of $c$ that is not contained in $\mathcal F_1(c)$.

We need some preparatory claims:  

\begin{claim}\label{claim:separate_f_h}
The cube $h$ is parallel to a sub-cube of $f$, and any hyperplane $H$ with $H\cap f=\emptyset$ must separate $f$ from $h$.  
\end{claim}
\renewcommand{\qedsymbol}{$\blacksquare$}
\begin{proof}[Proof of Claim~\ref{claim:separate_f_h}]
Let $H$ be a hyperplane crossing $h$, so that by the definition of $h$, there are persistent corners $v,v'$ separated by 
$H$.  Let $k,k'$ be codimension--$1$ faces of $c$, containing $v,v'$
respectively, with $k,k'$ parallel to $H$. On the one hand $k,k'$
contain persistent corners, and therefore lie in $\faces'(c)$. On the
other hand if $H \cap f = \emptyset$, then $f$ must be a subcube of
either $k$ or $k'$ contradicting $f \not\subset \mathcal F_1(c)$.  Thus $H\cap f\neq\emptyset$.  This shows that, for all hyperplanes $H$, if $H\cap f=\emptyset$, then $H\cap 
h=\emptyset$.  Since every hyperplane crossing $h$ crosses $f$, it follows that $h$ is parallel to a sub-cube $h'$ of $f$.

Moreover, if $H\cap f=\emptyset$, and $h,f$ are not separated by $H$, then $f$ lies in a codimension--$1$ face of $c$ 
containing a persistent corner.  Hence $f\subset\mathcal F_0(c)\subseteq\mathcal F_1(c)$, a contradiction.  Thus, if $H\cap 
f=\emptyset$, then $H$ separates $f$ from $h$.
\end{proof}

\begin{claim}\label{claim:disjoint_internal}
Let $H$ be a hyperplane of $c$ such that $H\cap f=\emptyset$ and $H\cap h=\emptyset$.  Then $H$ is dual to an internal 
$1$--cube of $c$.  Hence $H$ separates $h$ from $\bar h$.
\end{claim}

\begin{proof}[Proof of Claim~\ref{claim:disjoint_internal}]
The second conclusion follows from the first, by the definition of $\bar h$, so it suffices to prove the first.  Suppose 
that every $1$--cube dual to $H$ is external.  By Claim~\ref{claim:separate_f_h}, $H$ separates $f$ and $h$.

Fix a $1$--cube $e$ dual to $H$ and incident to a $0$--cube of $f$.
Since $e$ is not in $f$, there is a subcube $e\times f$ of $c$,
properly containing $f$.  (Since $H\cap f=\emptyset$, the dual
$1$--cube $e$ cannot be a $1$--cube of $f$.) Let $k,k'$ be the
codimension--$1$ faces of $c$ parallel to $H$, with $h\subseteq k$ and
$f\subseteq k'$.  Let $e'$ be a $1$--cube of $e\times f$.

If $e' \subset f\times e$ is parallel to $e$, then we can assume that
$e'$ is external since it is dual to $H$. Otherwise, if
$e'\subset (f\times e)\cap f = f \cap k'$, then $e'$ is external since
$f$ is completely external.

The remaining case is where $e'\subset k\cap (f\times e)$, so $e'$ is
parallel to a $1$--cube $\bar e'$ of $f$.  We will choose a specific
such $\bar e'$ momentarily.

First, let $e''$ be a $1$--cube parallel to $e'$ that contains a
$0$--cube of $h$.  By Lemma~\ref{lema:all_persistent}, $e''$ contains
a persistent corner and is therefore external.

Now choose $\bar e'$
as above (so, $\bar e'$ is parallel to $e'$ and $e''$ and lies in $f$)
such that the distance from $e''$ to $\bar e'$ is maximal over all
possible such choices.

We saw already that $e''$ is external.  Since $f$ is completely external and $\bar e'$ is contained in $f$, we also have 
that $\bar e'$ is external.

We claim that $e''$ and $\bar e'$ are not contained in a common
codimension--$1$ face of $c$.  Suppose to the contrary that they are.
Let $D$ be the hyperplane parallel to the codimension--$1$ face
containing $e'',\bar e'$.  Now, if $D\cap f=\emptyset$, then
$D\cap h=\emptyset$.  Hence, since $e''$ contains a $0$--cube of $h$,
and $\bar e'$ lies in $f$, and $e''$ and $\bar e'$ lie on the same
side of $D$, we see that $D$ does not separate $h$ from $f$,
contradicting Claim~\ref{claim:separate_f_h}.  Thus $D$ crosses $f$.
But then $f$ contains a parallel copy of $\bar e'$ that is separated
from $\bar e'$ by $D$, and no other hyperplane; this parallel copy is
thus further from $e''$ than $\bar e'$ is, contradicting how $\bar e'$
was chosen.  Thus $e''$ and $\bar e'$ do not lie in a common
codimension--$1$ face.

Hence, by 
Lemma~\ref{lem:how_panels_intersect_cubes}, every $1$--cube parallel to $\bar e'$, and in particular $e'$, is external (see 
Remark~\ref{rem:codim_1_external}).  Thus $e\times f$ is a completely external cube, contradicting 
maximality of $f$.  We conclude that every hyperplane $H$ separating $f,h$ is dual to an internal $1$--cube.
\end{proof}

We can now conclude the proof of assertion~\eqref{item:bar_h}.  Recall that we have assumed that $\bar h$ is 
not contained in $\mathcal F_1(c)$.  Note that $\bar h$ is contained in $f$ if and only if, for each 
hyperplane $H$, if $H$ and $f$ are disjoint, then $\bar h$ and $f$ are on the same side of $H$.  Indeed, 
obviously $\bar h\subseteq f$ implies that no hyperplane separates $\bar h,f$.  Conversely, suppose that no 
hyperplane separates $\bar h$ from $f$.  Then $\bar h\cap f\neq\emptyset$, so $\bar h\cap f=\bar h$ since 
$\bar h$ is parallel to $h$ and hence parallel to a subcube of $f$, by Claim~\ref{claim:separate_f_h}.

Now, by 
Claim~\ref{claim:separate_f_h}, if $H$ is a hyperplane disjoint from $f$, then $H$ must separate $f$ from 
$h$.  By Claim~\ref{claim:disjoint_internal}, $H$ separates $h$ from $\bar h$.  Hence $f,\bar h$ are on the 
same side of $H$ (namely, the halfspace not containing $h$).  Thus $\bar h\subseteq f$.  Since $f$ is 
completely external by hypothesis, the same is true of $\bar h$.  This proves assertion~\eqref{item:bar_h}.

\textbf{Assertion~\eqref{item:proper_containment_restriction}:} Now let 
$f_1,\ldots,f_k$ be the maximal completely external cubes of $c$ that do not lie in $\mathcal F_1(c)$.  If $k=0$, there is 
nothing to prove, so suppose $k\geq 1$.  By assertion~\eqref{item:bar_h}, $\bar h$ is completely external and $\bar 
h\subseteq f_i$ for all $i$.

\begin{claim}\label{claim:proper_face}
If $\mathcal D(c)$ is connected and $\mathcal D(c)\neq c$, then $c$ has a codimension--$1$ face $l$ containing $h\cup\bar h$.
\end{claim}

\begin{proof}[Proof of Claim~\ref{claim:proper_face}]
Let $H$ be a hyperplane.  Suppose that $H\cap h=\emptyset$, so $H\cap \bar h=\emptyset$.  If $H$ is not dual to an internal 
$1$--cube, then $H$ does not separate $h$ from $\bar h$, and thus $h\cup\bar h$ lie in a common codimension--$1$ face.  
Hence we can assume that for each $H$, either $H$ intersects $h$ (and thus $\bar h$) or $H$ is dual to some internal 
$1$--cube.

We can assume there is at least one hyperplane $H$ disjoint from $h$ (and thus dual to an internal $1$--cube).  Otherwise, 
by the above, every hyperplane intersects $h$, so $\mathcal D(c)=c$ and $h=\bar h$.

We now show that the above two assumptions imply that $\mathcal D(c)$ is disconnected, yielding a contradiction.  

Let $v\in\bar h$ be a $0$--cube.  Then there is a unique $0$--cube $v'$ such that every hyperplane of $c$ separates $v$ from 
$v'$.  We claim that $v'$ is a persistent corner.

Let $w$ be the closest $0$--cube (in the graph metric on $c^{(1)}$) of $h$ to $v$.  The $0$--cube $w$ is characterised by 
the property that a hyperplane $H$ separates $v$ from $w$ if and only if $H$ separates $v$ from $h$.  So, the hyperplanes 
separating $v$ from $v'$ fall into two categories: those that separate $v$ from $h$, and those that separate $w$ from $v'$ 
but do not separate $v$ from $h$.  Since $w\in h$, and each hyperplane separating $w$ from $v'$ crosses $h$, we thus have 
$v'\in h$.  By Lemma~\ref{lema:all_persistent}, $v'$ is a persistent corner, as required.

Let $e$ be a $1$--cube that contains $v$ and does not lie in $\bar h$.  Let $e'$ be the parallel $1$--cube containing $v'$.  
Since every hyperplane not dual to $e$ separates $e,e'$, and $e'$ is external (since $v'$ is a persistent 
corner), if $e$ is external then so is every $1$--cube parallel to $e$, by Lemma~\ref{lem:how_panels_intersect_cubes} (see 
Remark~\ref{rem:codim_1_external}).  Hence, since $e$ is dual to a hyperplane separating $h,\bar h$, and this hyperplane must 
be dual to an internal $1$--cube, $e$ is internal.  

Now, any edge-path $\sigma$ from $h$ to $\bar h$ must pass through a $1$--cube that contains some $v\in\bar h$ and does not 
lie in $\bar h$.  We saw above that such a $1$--cube must be internal.  Hence $\sigma$ is not in $\mathcal D(c)$.  So, since 
$h,\bar h\subset\mathcal D(c)$, $\mathcal D(c)$ is disconnected, as required.
\end{proof}

In view of the preceding claim,  assume that $\mathcal D(c)$ is disconnected. Then $\mathcal F_1(c)$ contains 
$\mathcal S(\bar h)$ since $\bar h$ is completely external.  So, $f_1$ must properly contain $\bar h$, for 
otherwise we would have $f_1\subset\mathcal F_1(c)$.  Hence, since each $f_i$ is a maximal completely external cube, 
$f_i\not\subset f_1$ and hence $\bar h\subsetneq f_i$.  

Now, if some hyperplane $E$ satisfies $E\cap 
f_i=\emptyset$ and does not separate $h,f_i$, then $h,f_i$, and hence $h,\bar h$, are on the same side of $E$, and we are 
done.  Hence we can assume that every hyperplane either crosses $f_i$ or separates $h,f_i$.

\begin{claim}\label{claim:everyone_external}
Let $D_i$ be a hyperplane crossing $f_i$ but disjoint from $\bar h$.  Then every $1$--cube dual to $D_i$ is 
external.
\end{claim}

\begin{proof}[Proof of Claim~\ref{claim:everyone_external}]
Let $v$ be a persistent corner, and let $v'$ be the $0$--cube separated from $v$ by all 
hyperplanes.  Since every hyperplane of $c$ either separates $h,f_i$ or crosses $f_i$, we have $v'\in f_i$.  Now, if $e'$ is 
a $1$--cube containing $v'$ and 
$e$ is parallel to $e'$ and containing $v$, then $e,e'$ are not contained in a common codimension--$1$ face of $c$.  Choose 
$e,e'$ to be dual to $D_i$.  Now, $e$ is external since $v$ is a persistent corner.  On the other hand, $e'\subset f_i$, so 
$e'$ is external.  Thus $e,e'$ are external $1$--cubes dual to $D_i$ that do not lie in a common codimension--$1$ face of 
$c$.  Hence, by Lemma~\ref{lem:how_panels_intersect_cubes} (via Remark~\ref{rem:codim_1_external}), every $1$--cube dual to 
$D_i$ is external.
\end{proof}
\renewcommand{\qedsymbol}{$\Box$}

From Claim~\ref{claim:everyone_external}, it follows that $h$ and
$\bar h$ lie on the same side of $D_i$, whence there is a
codimension--$1$ face containing $h\cup\bar h$.  Thus, whether or not
$\mathcal D(c)$ is connected, there is a codimension--$1$ face $l$
containing $h\cup\bar h$ provided $k\ge1$. Let $l'$ be the smallest
subcube of $c$ containing $h\cup\bar h$, i.e.  $l'$ is the
intersection of all codimension--$1$ faces $l$ as above.

Hence $c=l'\times s$, where $s$ is a cube with the property that, for
every hyperplane $D$ crossing $s$, all $1$--cubes dual to $D$ are
external.  Indeed, any hyperplane dual to an internal $1$--cube either
separates $h,\bar h$ or crosses $h$, and thus any such hyperplane
crosses $l'$.

Fix $i$.  Since $f_i\cap l'$ is a subcube of $f_i$, we can write $f_i=(f_i\cap l')\times s'$, where $s'$ is some cube 
intersecting $f_i\cap l'$ in a single $0$--cube.  Since the image of $f_i$ 
under the canonical projection $c\to l'$ is $f_i\cap l'$, $s'$ has 
trivial projection to $l'$, i.e. $s'$ is a subcube of $s$.

Now, suppose that $s'$ is a proper subcube of $s$, so that $s=s'\times s''$ for some nontrivial cube $s''$.  Let $E$ be a 
hyperplane crossing $s''$.  Then $E$ does not separate $h,\bar h$, since $E$ does not cross $l'$.  Since $E$ does not cross 
$f_i$, we thus have that $f_i$ lies on the side of $E$ containing $\bar h$, and hence there is a codimension--$1$ face, 
parallel to $E$, that contains $f_i$ and $h$.  Hence $f_i$ is contained in an element of $\faces'(c)$, a contradiction.  We 
conclude that $f_i=(f_i\cap l')\times s$.

For each $i$, we have
$f_i\cap l'=\bar h$.  Indeed, $\bar h\subseteq f_i\cap l'$ by our
earlier discussion.  On the other hand, any hyperplane crossing $l'$
but not $\bar h$ must be dual to an internal $1$--cube, while by
Claim~\ref{claim:everyone_external}, every hyperplane crossing $f_i$
but not $\bar h$ fails to be dual to an internal $1$--cube.  Hence
every hyperplane crossing $f_i\cap l'$ crosses $\bar h$, so
$f_i\cap l'=\bar h$.  Thus, for all $i$,
$f_i=(f_i\cap l')\times s=\bar h\times s$.  So, $k=1$, and we have a
single ``extra'' completely external cube $f_1$ that is maximal with
the property that it is not contained in any element of $\faces'(c)$.

Let $\bar h'$ be the parallel copy of $\bar h$ in $f_1$ that is
separated from $\bar h$ by precisely those hyperplanes crossing
$s$. Let $c'$ be a codimension--$1$ face of $c$ containing $\bar h'$.
Let $E$ be the hyperplane parallel to $c'$. If $E$ is dual to a
$1$--cube of $l'$, and $E$ separates $h$ from $\bar h$, then $E$
separates $h$ from $\bar h'$ and, in particular $E$ separates $c'$
from $h$, so $c'\not\in\faces'(c)$.

If $E$ is dual to a $1$--cube of $s$, then $E$ cannot separate
$h,\bar h$, since all $1$--cubes dual to $E$ are external.  On the
other hand, $E$ separates $\bar h'$ from $\bar h$, and hence from
$h$. Thus $E$, as above, separates $h$ from $c'$, so $c'$ is not in
$\faces'(c)$.  Otherwise, if $E$ crosses $h$, then $E$ crosses
$\bar h$ and hence crosses $\bar h'$.  Thus $\bar h'\not\subset c'$.

Thus $\bar h'\subset\bigcap_if_i=f_1$ and $\bar 
h'\not\subset c'$ for any $c'\in\faces'(c)$.  Let $\interior{\bar h'}$ be the 
open cube whose closure is $\bar h'$ (or, if $\bar h'$ is a $0$--cube, let 
$\interior{\bar h'}=\bar h'$).  If $\interior{\bar h'}$ is contained in 
$\bigcup_{c'\in\faces'(c)}c'$, then $\interior{\bar h'}$, being an open cube 
or $0$--cube, would have to lie in some such $c'$, a contradiction.  This 
completes the proof of assertion~\eqref{item:proper_containment_restriction}. 

\textbf{Assertion~\eqref{item:l_or_equal}:}  Each part of the assertion was proved above.

\textbf{The deformation retraction:}  If $k=0$, then there is nothing to prove, 
so suppose that $k\geq 1$, and thus, by the above discussion, $k=1$.  It 
follows from the above discussion that $\mathcal 
S=\mathcal S(\bar h)\subset l'$.  We thus have $\mathcal 
S\subset\bigcup_{c'\in\faces'(c)}c'$.  So, 
we just need to exhibit a deformation retraction of 
$\left(\bigcup_{c'\in\faces'(c)}c'\right)\cup f_1$ to 
$\left(\bigcup_{c'\in\faces'(c)}c'\right)$.

Let $\bar h'$ be the cube from 
assertion~\eqref{item:proper_containment_restriction}.  For convenience, let 
$X=\bigcup_{c'\in\faces'(c)}c'$ and let $Y=f_1$.  So, by 
assertion~\eqref{item:proper_containment_restriction}, $Y=\bar h'\times s$.

Clearly $\bar h'\times s$ is the unique maximal cube of $Y$ containing
$\bar h'$.  If $\bar h'$ lies in some other maximal cube of $X\cup Y$,
then $\bar h'\subset X$.  Since $\interior{\bar h'}\cap X=\emptyset$,
this is impossible.  Hence $\bar h'$ is contained in a unique maximal
cube of $X\cup Y$, i.e. $\bar h'$ is a free face of $X\cup Y$. We will
perform the standard collapse of this free face to get a deformation
retraction onto the space obtained by deleting all open cells whose
closures contain the interior of $\bar h'$.

Let $y$ be a (closed) cube of $Y$ containing $\bar h'$.  Then $y$ cannot be 
contained in a cube of $X$, because such a cube would contain $\interior{\bar 
h'}\subset y$.  Hence the unique maximal cube of $X\cup Y$ containing $y$ is 
$\bar h'\times s$.

Let $Y_1$ be the subcomplex of $Y$ obtained by removing
$\interior{\bar h'}$ and $\interior{y}$ for every cube $y$ of $Y$ that
contains $\bar h'$.  Passing from $X\cup Y$ to $X\cup Y_1$ amounts to
collapsing the free face $\bar h'$ of $X\cup Y$, so there is a
deformation retraction $X\cup Y\to X\cup Y$ (fixing $X$) of $X\cup Y$
onto $X\cup Y_1$.

We now collapse $X\cup Y_1$ to $X$ as follows.  Let $d$ be a subcube of $\bar 
h'$.  Then $\interior{d}\subset X$ if and 
only if $\interior{d}\times s\subset X$.  Indeed, if $\interior{d}$ lies in a 
codimension--$1$ face $c'$ containing a persistent corner, then $c'$ is 
parallel to some hyperplane $E$ crossing $\bar h'$ (otherwise $\interior{\bar 
h'}$ would lie in $c'$) and hence not crossing $s$.  So, $\interior{d}\times 
s\subset c'$.

Now, for each maximal subcube $d$ of $\bar h'$ that lies in $Y_1$ but
not in $X$, we have that $d$ is a free face of $Y_1$, contained in the
unique maximal cube $d\times s$, and we can now collapse these cubes
to get $X \cup Y_2$.  Repeating this finitely many times we eventually
obtain the subcomplex $X$, as required.
\end{proof}

\subsection{CAT(0)ness and compatible collapse for the  $\mathcal F(c)$}
We now describe how 
to deformation retract $c$ to $\mathcal F(c)$ compatibly with the corresponding 
deformation retractions of the other cubes.

\begin{lem}\label{lem:big_external}
Let $c$ be an external cube for which $\mathcal D(c)$ is connected and let $f$ 
be a completely external proper subcube of $c$.  Then $f$ lies in a 
codimension--$1$ external subcube of $c$.
\end{lem}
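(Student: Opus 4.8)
The plan is to deduce the lemma from the following slightly stronger assertion, proved by induction on the codimension $k=\dimension c-\dimension g$: \emph{if $c$ is external with $\mathcal D(c)$ connected and $g\subsetneq c$ is any external subcube, then $g$ lies in a codimension--$1$ external subcube of $c$.} The lemma is the case $g=f$, since a completely external cube is external. When $k=1$ the subcube $g$ is itself codimension--$1$ and external, so there is nothing to prove; so assume $k\ge 2$.

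The key step is to produce an external $1$-cube $e$ of $c$ with $e\cap g\neq\emptyset$ and $e\not\subseteq g$; this is exactly the sub-argument already carried out inside the proof of Lemma~\ref{lem:connected_hereditary}, which I would recall. If $g\cap h(c)\neq\emptyset$, then by minimality of the persistent subcube $h(c)$ the cube $g$ contains a persistent corner $v$ of $c$, and since $\dimension g<\dimension c$ there is a $1$-cube at $v$ not lying in $g$; it is external because $v$ is persistent. If instead $g\cap h(c)=\emptyset$, then $h(c)\neq\emptyset$ (as $c$ is external, by Lemma~\ref{lem:cube_intersect}), so using that every $0$-cube lies in $\mathcal D(c)$ and that $\mathcal D(c)$ is connected, there is an edge-path in $\mathcal D(c)$ from a $0$-cube of $h(c)$ to a $0$-cube of $g$; the $1$-cube by which this path first enters $g$ meets $g$, is not contained in $g$, and is external since it lies in $\mathcal D(c)$. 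Either way we obtain the desired $e$.

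Now, since $e\not\subseteq g$ is a $1$-cube, the hyperplane dual to $e$ cannot cross $g$ (otherwise the unique $1$-cube at $e\cap g$ dual to it would be $e$ itself, forcing $e\subseteq g$), so $g\times e$ is a genuine subcube of $c$ of dimension $\dimension g+1$ that contains $g$. It is external by Lemma~\ref{claim:opposite}, being a product of the external cubes $g$ and $e$; and $\dimension c-\dimension(g\times e)=k-1\ge 1$, so it is still a proper subcube of $c$. Applying the inductive hypothesis to $g\times e$ produces a codimension--$1$ external subcube of $c$ containing $g\times e$, hence containing $g$; this completes the induction and the lemma.

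The one delicate point is the production of $e$ when $g\cap h(c)=\emptyset$: there connectedness of $\mathcal D(c)$ is essential, and one must check that the relevant edge-path can be taken inside $\mathcal D(c)$ — which holds because $\mathcal D(c)$ is connected and contains every $0$-cube — and that the first $1$-cube by which it enters $g$ really does meet $g$ without lying in $g$. Everything else is bookkeeping about products of external cubes via Lemma~\ref{claim:opposite}. I would also note that, although the passage of the proof of Lemma~\ref{lem:connected_hereditary} that I am invoking is stated for an external subcube $c'$, its proof never uses externality of $c'$, only that $c'$ is a proper subcube and that $\mathcal D(c)$ is connected, so it applies here without change.
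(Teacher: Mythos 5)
Your proof is correct, and it takes a genuinely different route from the paper's. The paper proves the lemma by first reducing to the case of a maximal completely external cube $f$ of codimension at least $2$ which, for contradiction, lies in no codimension--$1$ external face; it then invokes the structural Lemma~\ref{lem:extra_cubes} to locate $\bar h(c)$ inside $f$ and place $h(c)\cup\bar h(c)$ in an external codimension--$1$ face $l$, passes to $l$ using Lemma~\ref{lem:connected_hereditary}, and finishes by an induction on $\dimension c$. Your argument avoids Lemma~\ref{lem:extra_cubes} entirely: you induct on the codimension $k=\dimension c-\dimension g$ with $c$ fixed, and the key step is simply to extract from the proof of Lemma~\ref{lem:connected_hereditary} an external $1$--cube $e$ meeting $g$ at a $0$--cube but not contained in $g$ (using the persistent corner of $g$ if $g\cap h(c)\neq\emptyset$, and otherwise an edge-path in the connected $\mathcal D(c)$ from $h(c)$ to $g$). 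Since the hyperplane dual to $e$ does not cross $g$, the cube $g\times e$ is a subcube of $c$ of codimension $k-1$, it is external by Lemma~\ref{claim:opposite}, and induction finishes. This is more self-contained and elementary, it proves the formally stronger statement that every external proper subcube (not just every completely external one) of $c$ lies in a codimension--$1$ external subcube of $c$, and --- as you correctly observe --- the invoked passage of the proof of Lemma~\ref{lem:connected_hereditary} never uses externality of the subcube in question, so the borrowing is legitimate. The only cost is that the induction on $k$ bottoms out at $k=1$ trivially, whereas the paper's argument gives additional structural information (via $\bar h$ and the face $l$) that is reused later in Lemma~\ref{lem:external_crush}; that structure is not needed for the present lemma, so your shorter proof is preferable here.
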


\begin{proof}
If $c$ is completely external, the claim is obvious, so suppose there is an 
internal $1$--cube.  If $f$ is a codimension--$1$ subcube, then we're done.  

As before, let $h=h(c)$ be persistent subcube of 
$c$, and let $\bar h=\bar h(c)$ be salient subcube of $c$.  (It is possible 
that $h=\bar h$.)

Suppose that $f$ is a maximal completely external cube and has codimension at 
least $2$, and that $f$ does not lie 
in any codimension--$1$ external sub-cube.  By Lemma~\ref{lem:extra_cubes}, we 
have that $\bar h\subset f$ and $h\cup\bar 
h\subset l$ for some external codimension--$1$ face $l$ containing a persistent 
corner.  

Let $l$ be as above. 
 Since $\bar h\subseteq f$ is completely external, $\bar 
h\subset\mathcal D(l)$.  If $f\subseteq l$, we have contradicted that $f$ does 
not lie in any codimension--$1$ external face.  Hence, since $f\cap 
l\neq\emptyset$ (it contains $\bar h$), we have that $f\cap l$ is a 
codimension--$1$ face of $f$.  

Now, since $\mathcal D(c)$ is connected and $l$ is external, 
Lemma~\ref{lem:connected_hereditary} implies that $\mathcal D(l)$ is connected.

Let $D$ be the unique hyperplane of $c$ not crossing $l$.  Since $f$ intersects 
$l$ but does not lie in $l$, we have that $f$ intersects $D$.  Now, since $D$ 
does not separate $h,\bar h$, and does not cross $h$ or $\bar h$, every 
$1$--cube dual to $D$ is external.  By induction on dimension and connectedness 
of $\mathcal D(l)$, the completely external cube $f\cap l$ is contained in an 
external codimension--$1$ face $t$ of $l$; recall that $t$ contains a 
persistent corner $v$ of $l$.  Let $e$ be the $1$--cube of $c$ dual to $D$ and 
emanating from $v$.  Then $e$ is external, so $t\times e$ is a codimension--$1$ 
face of $c$ with a persistent corner, $v$, and $f=(f\cap l)\times e$ lies in 
$t\times e$.
\end{proof}

\begin{lem}\label{lem:external_crush}
Let $c$ be an external cube. Then there is a strong deformation 
retraction 
$\Delta_c:c\times[0,1]\to c$ so that $\Delta_c(-,0)$ is the identity and 
$\Delta_c(-,1)$ is a retraction $c\to\mathcal F(c)$.  

Moreover, if $c'$ is a sub-cube of $c$ which is external and has codimension $\ell$, then the 
restriction of $\Delta_c$ to $c'\times[0,1]$ coincides with the identity on $c'\times[0,1-\frac{1}{2^{\ell}}]$ and, on 
$c'\times[1-\frac{1}{2^{\ell}},1]$, restricts to the map given by $(x,t)\mapsto \Delta_{c'}(x,\frac{t+2^\ell-1}{2^\ell})$.
\end{lem}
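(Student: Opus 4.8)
The plan is to build $\Delta_c$ by induction on $\dimension c$, and in the inductive step to lay the deformation retractions of the external codimension--$1$ faces of $c$ (suitably reparametrised in time) on top of a single ``macroscopic'' collapse of $c$ onto the union of those faces together with the diagonal pieces $\mathcal S(w)$ and the extra completely external cubes furnished by Lemma~\ref{lem:extra_cubes}. If $\dimension c=0$, or more generally if $c$ is completely external, then $\mathcal F(c)=c$ and $\Delta_c(x,t):=x$ works, with the compatibility clause vacuous ($\ell=0$). So assume $c$ is external but not completely external and that $\Delta_{c'}$ has been constructed, with the stated compatibility, for all external cubes $c'$ of smaller dimension. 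First I would observe that these are mutually compatible: if $b,b'$ are external subcubes of $c$ then $b\cap b'$ is external (Lemma~\ref{lem:external_cube_intersections}), and applying the compatibility clause to $b\cap b'\subset b$ and to $b\cap b'\subset b'$ shows $\Delta_b$ and $\Delta_{b'}$ agree on $(b\cap b')\times[0,1]$. Hence $\{\Delta_{c'}\}_{c'\in\faces'(c)}$ glues to a strong deformation retraction $\Delta_\partial$ of $\bigcup_{c'\in\faces'(c)}c'$ onto $\bigcup_{c'\in\faces'(c)}\mathcal F(c')=\mathcal F_0(c)$, which restricts to the identity on each external face of codimension $\geq2$ during the first half of its parameter interval.

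\textbf{The two stages.} Write $h=h(c)$, $\bar h=\bar h(c)$, let $\mathcal S=\bigcup_w\mathcal S(w)$ over completely external $w\subseteq\bar h$, and let $f_1,\dots,f_k$ (with $k\leq1$, by Lemma~\ref{lem:extra_cubes}\eqref{item:proper_containment_restriction}) be the maximal completely external cubes of $c$ not lying in $\mathcal F_1(c)$; set
\[
Z\ :=\ \Big(\bigcup_{c'\in\faces'(c)}c'\Big)\ \cup\ \mathcal S\ \cup\ \bigcup_i f_i .
\]
I would define $\Delta_c$ to agree, on $c\times[0,\tfrac12]$, with a strong deformation retraction $\Theta\colon c\times[0,\tfrac12]\to c$ of $c$ onto $Z$ that fixes $Z$ pointwise (constructed below), and on $c\times[\tfrac12,1]$ to agree with $\Delta_\partial$ on $\bigcup_{c'\in\faces'(c)}c'$ (with time reparametrised affinely so $\tfrac12\mapsto0$, $1\mapsto1$) and with the identity on $c\setminus\bigcup_{c'\in\faces'(c)}c'$. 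This is well defined and continuous: at time $\tfrac12$ the image of $\Theta$ is $Z$, and $\Delta_\partial$ fixes $\mathcal S$ and each $f_i$, because for $c'\in\faces'(c)$ we have $\mathcal S(w)\cap c'\subseteq\mathcal S(w\cap c')\subseteq\mathcal F(c')$ by Lemma~\ref{lem:overlap}, and $f_i\cap c'$ is a completely external subcube of $c'$ hence lies in $\mathcal D(c')\subseteq\mathcal F(c')$, while $\Delta_\partial$ (being on each $c'$ a \emph{strong} deformation retraction onto $\mathcal F(c')$) fixes these subsets and fixes points of $\mathcal S\cup\bigcup_i f_i$ outside $\bigcup_{c'}c'$ outright. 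Therefore $\Delta_c(-,1)$ retracts $c$ onto $\mathcal F_0(c)\cup\mathcal S\cup\bigcup_i f_i=\mathcal F_1(c)\cup\bigcup_i f_i=\mathcal F(c)$, and $\Delta_c$ fixes $\mathcal F(c)$ at all times.

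\textbf{Building $\Theta$.} If $\mathcal D(c)$ is connected, then no $f_i$ occurs and $\mathcal S\subseteq\bigcup_{c'\in\faces'(c)}c'$ (by Lemma~\ref{lem:big_external} and Lemma~\ref{lem:extra_cubes}\eqref{item:l_or_equal}), so $Z=\bigcup_{c'\in\faces'(c)}c'$ is exactly the union of the codimension--$1$ faces of $c$ meeting the persistent subcube $h$; since $h\subsetneq c$ this is a proper subcomplex of $\partial c$, and I would collapse $c$ onto it by a standard deformation retraction of the cube onto this subcomplex (pushing away from the salient cube $\bar h$). If $\mathcal D(c)$ is disconnected, I would use the structural output of Lemma~\ref{lem:extra_cubes}: the product decomposition $c=l'\times s$ with $f_1=\bar h\times s$ (when $k=1$), the free-face collapses, and in particular the final deformation-retraction statement of that lemma, to realise $c\searrow Z$ compatibly with the faces $c'\in\faces'(c)$, which $\Theta$ must fix. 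In either case $\Theta$ fixes every external subcube of $c$ pointwise: each such subcube either lies in some $c'\in\faces'(c)$, or is a completely external subcube of some $f_i$ (by Lemma~\ref{lem:extra_cubes}\eqref{item:proper_containment_restriction}), hence is contained in $Z$.

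\textbf{Compatibility clause, and the main obstacle.} For an external subcube $c'$ of codimension $\ell\geq1$: on $[0,\tfrac12]$, $\Delta_c|_{c'}$ is the identity since $\Theta$ fixes $c'$; if $c'$ lies in some $l\in\faces'(c)$, then on $[\tfrac12,1]$ it equals $\Delta_l|_{c'}$ (via the affine reparametrisation), which by the inductive compatibility of $\Delta_l$ is the identity until the subinterval corresponding to codimension $\ell-1$ in $l$ — that is, until parameter $1-\frac1{2^\ell}$ in $c$ — and thereafter is $\Delta_{c'}$ with time reparametrised affinely onto $[0,1]$; composing the two affine reparametrisations gives the stated formula. (If $c'$ lies in no such $l$ it is completely external, so $\Delta_{c'}$ is the identity and there is nothing to verify.) The hard part will be the construction of $\Theta$ in the disconnected case: there $Z$ is genuinely \emph{not} a subcomplex of $c$ — it carries the diagonal slabs $\mathcal S(w)$ and the diagonally embedded cube $f_1$ — so one cannot simply collapse free faces of a cell structure, and making $\Theta$ both fix $Z$ and restrict correctly onto every external face is precisely what the product decomposition $c=l'\times s$, the identity $f_1=\bar h\times s$, and the final deformation-retraction statement of Lemma~\ref{lem:extra_cubes} are designed to control.
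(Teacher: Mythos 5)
Your two-stage plan coincides structurally with the paper's proof: first a strong deformation retraction of $c$ onto the subspace $Z$ (which the paper calls $\mathcal G''(c)$) consisting of the union of the faces in $\faces'(c)$, the slabs $\mathcal S(w)$, and the extra completely external cubes $f_i$; then, on $[\tfrac12,1]$, the inductively supplied $\Delta_{c'}$ reparametrised and extended by the identity over $Z\setminus\bigcup_{c'\in\faces'(c)}c'$. Your treatment of the second stage --- the mutual compatibility of the $\Delta_{c'}$ on overlaps, the use of Lemma~\ref{lem:overlap} and the complete-externality of $f_i\cap c'$ to see that the pasted retraction fixes $\mathcal S$ and the $f_i$, and the composition of time reparametrisations --- is essentially the paper's own argument and is correct.

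The genuine gap is the first stage. You announce ``a strong deformation retraction $\Theta\colon c\times[0,\tfrac12]\to c$ of $c$ onto $Z$ that fixes $Z$ pointwise (constructed below)'' but never construct it. In the connected case you gesture at ``pushing away from $\bar h$''; in the disconnected case you openly flag it as ``the hard part'' and point to the product decomposition and the final deformation-retraction statement of Lemma~\ref{lem:extra_cubes}. But that last statement only retracts $Z=\mathcal G''(c)$ onto $\mathcal G(c)$ (it removes the $f_i$); it does nothing to get you from $c$ to $Z$, which is what $\Theta$ must do, so it cannot plug the hole. And the disconnected case is exactly where $Z$ is not a subcomplex of $c$ (it contains the diagonal $\mathcal S(w)$), so no naive free-face collapse applies. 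The paper avoids this by not building $\omega_c$ (its $\Theta$) by hand at all: it passes to a finite subdivision of $c$ in which $\mathcal G''(c)$ becomes a CW subcomplex, proves $\mathcal G''(c)$ is contractible (a nontrivial argument: $\mathcal G'(c)$ is contractible by an explicit description of the open cells removed from $\partial c$, $\mathcal G'(c)\cap(\mathcal S(c)\cup h)$ is shown to be a cone, hence $\mathcal G(c)$ is contractible, and Lemma~\ref{lem:extra_cubes} gives the collapse from $\mathcal G''(c)$ to $\mathcal G(c)$), and then invokes Whitehead's theorem together with the homotopy-extension property for CW pairs to conclude that the contractible subcomplex $\mathcal G''(c)$ of the contractible $c$ is a strong deformation retract. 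The ``fixes $Z$ pointwise'' property that your compatibility step relies on then comes for free. Either you supply this contractibility argument and cite Whitehead, or you construct $\Theta$ explicitly, which is considerably more delicate than the proposal acknowledges; as written, neither is done.
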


\begin{proof}
Let $\faces'(c),h=h(c),\bar h=\bar h(c),\{\mathcal S(w)\}$ and 
$\mathcal 
F(c)$ be as 
in the definition of $\mathcal F(c)$.

We will argue by induction on $d=\dimension c$.  In the base case, $d\leq 1$, 
then $c$ is external only if $c$ is completely external, in which case 
$c=\mathcal F(c)$ and we are done.  More generally, whenever $c$ is completely 
external, we take $\Delta_c:c\times[0,1]\to c$ to be projection onto the $c$ 
factor.  Lemma~\ref{lem:compat} implies that $\mathcal F(c')=c'$ for each face 
$c'$ of $c$, whence the ``moreover'' statement also holds.

Hence suppose that the lemma holds for external cubes of dimension $\leq d-1$. 
The inductive step 
has two parts.

\textbf{The first collapse:} Suppose that $c$ is external but not completely 
external (since the completely external case was handled above).

Recall that $\faces'(c)$ denotes the set of external codimension--$1$ faces 
$c'$ of $c$ such that $c$ contains a persistent corner.  By Lemma~\ref{lem:compat}, 
$\faces'(c)$ is exactly the set of all codimension--$1$ external sub-cubes of 
$c$.  Let $\mathcal G'(c)$ be the union of the elements of
$\faces'(c)$.

Now, $\faces'(c)$ is a proper subset of the set of codimension--$1$ faces of 
$c$.  Otherwise, Lemma~\ref{lem:how_panels_intersect_cubes} would imply that $c$ 
is completely external, a contradiction.  On the other hand, $\mathcal G'(c)$ is 
connected, since $\faces'(c)$ contains a set $c_1',\ldots,c'_d$ of 
pairwise-intersecting codimension--$1$ sub-cubes with one in each parallelism 
class.  Note that $\bigcup_{i=1}^dc'_i$ is contractible.

For each $i\leq d$, let $\bar c_i'$ be the codimension--$1$ face parallel to 
$c_i'$.  Let $I$ be the set of $i\leq d$ for which $\bar c_i'\in\faces'(c)$.  
Without loss of generality, $I=\{1,\ldots,\ell\}$ for some $\ell< d$.  Hence 
$\mathcal G'(c)$ is obtained from $c$ by removing the interior of $c$ together 
with each open cube that is not contained in 
$\bigcup_{i=1}^dc_i'\cup\bigcup_{i=1}^\ell\bar c_i'$.  The cubes $\bar 
c_i,i>\ell$ all contain $\bar h$, and $\mathcal G'(c)$ is obtained by removing 
$\interior{c}$ together with a nonempty, connected subspace of the star 
of $\bar h$ which is obtained from a subcomplex of that star by removing its 
boundary.  Hence $\mathcal G'(c)$ is contractible.

Let $\mathcal S(c)$ be the following subspace of $c$.  First, if $\mathcal 
D(c)$ 
is connected, let $\mathcal S(c)=\emptyset$.  Otherwise, let $h,\bar h\subset 
c$ be 
the sub-cubes defined above, so that $\mathcal F_1(c)$ is obtained by adding 
$\mathcal S(w)$ to $\mathcal F_0(c)$ for each completely external sub-cube $w$ of $\bar 
h$.  Let $S_1,\ldots,S_t$ be the set of all such $\mathcal S(w)$, and let $\mathcal 
S(c)=\bigcup_{i=0}^tS_i$.  Let $\mathcal G(c)=\mathcal G'(c)\cup\mathcal S(c)$.

We claim that $\mathcal G(c)$ is contractible.  First, note that $\mathcal 
S(c)\cup h$ is contractible and that $h\subset \mathcal G'(c)$.  To show that 
$\mathcal G(c)=\mathcal G'(c)\cup\mathcal S(c)$ is contractible, it thus 
suffices 
to show that $\mathcal G'(c)\cap(\mathcal S(c)\cup h)$ is 
contractible.

For each $c'\in\faces'(c)$, let $\mathcal V(c')$ be the (nonempty) set of 
$0$--cubes, each of whose incident $1$--cubes in $c'$ is external, and let 
$h(c')$ be the convex hull of the elements of $\mathcal V(c')$.  In other words, $h(c')$ is the persistent subcube of $c'$ 
and $\mathcal V(c')$ is the $0$--skeleton of $h(c')$, by Lemma~\ref{lema:all_persistent}.

Let $\bar 
h(c')$ be the cube of $c'$ that is parallel to $h(c')$ and separated from $h(c')$ by exactly those hyperplanes of $c'$ that 
do not cross $h(c')$ but cross external $1$--cubes.  Note that $h\cap c'\subseteq h(c')$.  
Indeed, each $0$--cube of $c'$, all of whose incident $1$--cubes in $c$ is 
external, certainly belongs to $\mathcal V(c')$, and the convex hull of all such 
$0$--cubes is $h\cap c'$.

\begin{enumerate}
 \item For any codimension--$1$ face $c'$ of $c$, if $h\subset c'$, then $c'$ is 
external.  Moreover, either $(\mathcal S(c)\cup h)\cap c'=h$ or
$\mathcal S(c)\cup h\subset 
c'$.  Indeed, if the hyperplane of $c$ not crossing $c'$ does not abut a panel, 
then the latter holds because $h$ and $\bar h$ are both on the same
side of this hyperplane in $c'$; otherwise, the former holds.
 
 \item If $c'$ is a codimension--$1$ face and $h\cap c'\neq\emptyset$ but 
$h\not\subset c'$, then again $c'$ is external, and $(\mathcal S(c)\cup h)\cap 
c'=\mathcal S(c)\cap c'$.  For each completely external cube $w$ of $\bar h$, we have that $\mathcal S(w)\cap c'$ is a cube 
in $\mathcal S(c')$, where $\mathcal S(w)\cap c'$ is regarded as a cube (using its product structure).
 
 \item If $\bar h\subseteq c'$, then either $h\subset c'$ or $h\cap 
c'=\emptyset$, in which case $c'$ is not external.  Indeed, if the hyperplane 
$H$ parallel to $c'$ separates $h,\bar h$, then it separates $c'$ from $h$, in 
which case $c'$ is not external since external faces contain persistent 
corners.
\end{enumerate}

From the first two statements, any codimension--$1$ face $c'$ of $c$ 
intersecting $\mathcal S(c)\cup h$ is either external or disjoint from $h$.  In 
the 
latter case, $c'$ must contain $\bar h$, so the third statement implies 
that $c'\not\in\faces'(c)$.  It follows that 
$(\mathcal S(c)\cup h)\cap\mathcal G'(c)$ is contractible: just collapse $h$ to 
a 
point and observe that $(\mathcal S(c)\cup h)\cap\mathcal G'(c)$ becomes the 
cone 
on the union of some of the completely external cubes of $\bar 
h$.

Now, $\mathcal G(c)$ is 
a contractible subcomplex of a finite subdivision of $c$, which is a 
contractible CW 
complex.  Let $\mathcal G''(c)$ consist of $\mathcal G(c)$, together with 
any completely external subcube $f$ of $c$ that is not contained in any element 
of $\faces'(c)$.  By Lemma~\ref{lem:extra_cubes}, $\mathcal G(c)$ is a strong 
deformation retract of $\mathcal G''(c)$, so $\mathcal G''(c)$ is 
contractible.

Hence $\mathcal G''(c)$ is a strong deformation retract of $c$, by 
Whitehead's theorem~\cite{Whitehead:i,Whitehead:ii}.  Let 
$\omega_c:c\times[0,1]\to c$ be a strong deformation retraction from $c$ to 
$\mathcal G''(c)$.

\textbf{The second collapse:} By induction on $d$, for each $c'\in\faces'(c)$, 
we have  
strong deformation retractions $\Delta_{c'}:c'\times[0,1]\to c'$ with 
$\Delta_{c'}(-,1)$ a retraction $c'\to\mathcal F(c')$.  

Let $c',c''\in\faces'(c)$.  Then $c'\cap c''$ is an external proper sub-cube of $c'$ and $c''$, so by induction on 
dimension, we have $$\Delta_{c'}|_{(c'\cap c'')\times\{1\}}=\Delta_{c''}|_{(c'\cap c'')\times\{1\}}=\Delta_{c'\cap 
c''}(-,1).$$

Pasting thus provides a deformation 
retraction $\alpha_c:\mathcal G'(c)\times[0,1]\to \mathcal G'(c)$ so that 
$\alpha_c(-,1)$ is a retraction $\mathcal 
G'(c)\to\bigcup_{c'\in\faces'(c)}\mathcal F(c')$.  We now show how to extend 
$\alpha_c$ from a deformation retraction collapsing $\mathcal G(c)$ to one 
collapsing $\mathcal G''(c)$.

If $\mathcal D(c)$ is connected (so that $\mathcal F(c)=\mathcal D(c)$), then 
by 
Lemma~\ref{lem:connected_hereditary}, we have $\mathcal F(c')=\mathcal 
D(c')=c'\cap\mathcal F(c)$ for each $c'\in\faces'(c)$.  We also 
have $\bigcup_{c'\in\faces'(c)}\mathcal F(c')=\bigcup_{c'\in\faces'(c)}\mathcal 
D(c')=\mathcal D(c)=\mathcal F(c)$.  Indeed, if $f$ is a completely external 
cube of $c$, then $f$ is contained in some 
codimension--$1$ external cube, by Lemma~\ref{lem:big_external}.  Hence, in the 
connected case, $\alpha_c$ is a deformation retraction of $\mathcal 
G(c)=\mathcal G'(c)=\mathcal G''(c)$ onto $\mathcal F(c)$.

If $\mathcal D(c)$ is disconnected, then $\mathcal S(c)\neq\emptyset$.  Indeed, the construction of $\mathcal F(c)$ ensures 
that $\mathcal F(c)$ contains some $\mathcal S(w)$ whenever $\mathcal D(c)$ is disconnected (Remark~\ref{rem:nonempty_S}) 
and $\mathcal S(w)$ is contained in $\mathcal S(c)$ by definition.

Now, for 
each $S_i$, and each $c'\in\faces'(c)$, we have 
$S_i\cap c'\subseteq\mathcal F(c')$, by Lemma~\ref{lem:overlap}.  Hence we can 
extend the deformation retraction $\alpha_c$ to a 
strong deformation retraction $\alpha_c:[\mathcal G'(c)\cup\mathcal 
S(c)]\times[0,1]=\mathcal G(c)\times[0,1]\to\mathcal 
G(c)$ with $\alpha_c(-,1)$ a retraction $\mathcal G(c)\to\mathcal F_1(c)$.  
Indeed, we just extend each $\alpha_c(-,t)$ over each $S_i\cap(c-\mathcal 
G'(c))$  by declaring it to be 
the identity.  Lemma~\ref{lem:extra_cubes} ensures that $\mathcal G''(c)$ 
differs from $\mathcal G'(c)$ by adding a (possibly empty) set of cubes $f$ 
that intersect $\mathcal G'(c)$ in cubes of the various $\mathcal 
F(c'),c'\in\faces(c)$.  Hence we can extend $\alpha_c$ to a strong deformation 
retraction $\alpha_c:\mathcal G''(c)\times[0,1]\to\mathcal 
G''(c)$ by declaring it to be the identity on each new cube $f$; this gives the 
desired deformation retraction from $\mathcal G''(c)$ onto $\mathcal 
F(c)$.

\textbf{The composition:}  Now let 
$\Delta_c:c\times[0,1]\to c$ be given by $$\Delta_c(x,t)=\begin{cases}
                                                              \omega_c(x,2t), 
&\ t\in[0,\half]\\
\alpha_c(\omega_c(x,1),2(t-\half)), &\ t\in[\half,1].
                                                         \end{cases}$$
This gives the desired strong deformation retraction.

\textbf{Compatibility:}  Let $c'\subset c$ be an external sub-cube of $c$.  To 
prove the ``moreover'' statement, it suffices to handle the case where $c'$ is 
codimension--$1$; the other cases follow by induction on dimension.  We saw 
above that externality of $c'$ implies that $c'\in\faces'(c)$.  Hence, by 
definition, $\Delta_c$ restricts on $c'\times[0,\frac12]$ to the identity and on $c'\times[\frac12,1]$ to the 
map given by $(x,t)\mapsto\Delta_{c'}(x,t-\frac12)$, as 
required.
\end{proof}

\begin{cor}\label{cor:simply_connected}
For each external cube $c$, the subspace $\mathcal F(c)$ is contractible.
\end{cor}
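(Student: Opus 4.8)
The plan is to read this off directly from Lemma~\ref{lem:external_crush}. That lemma furnishes, for every external cube $c$, a strong deformation retraction $\Delta_c\colon c\times[0,1]\to c$ whose time-one map $\Delta_c(-,1)$ is a retraction of $c$ onto $\mathcal F(c)$. Since $c$ is a closed Euclidean cube, it is convex and hence contractible; and a strong deformation retract of a contractible space is contractible. Concretely, writing $\iota\colon\mathcal F(c)\hookrightarrow c$ for the inclusion, we have $\Delta_c(-,1)\circ\iota=\mathrm{id}_{\mathcal F(c)}$ (a retraction fixes its image) while $\iota\circ\Delta_c(-,1)\simeq\mathrm{id}_c$ via the homotopy $\Delta_c$, so $\iota$ is a homotopy equivalence; composing $\Delta_c(-,1)$ with a null-homotopy of $\mathrm{id}_c$ then exhibits a null-homotopy of $\mathrm{id}_{\mathcal F(c)}$, i.e. $\mathcal F(c)$ is contractible.

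For completeness I would record the degenerate case separately: if $c$ is completely external, then $\mathcal F(c)=\mathcal D(c)=c$ by Definition~\ref{defn:external_connected}, so the statement is immediate, and this is consistent with the convention in the proof of Lemma~\ref{lem:external_crush} that $\Delta_c$ is the projection onto the $c$--factor when $c$ is completely external. (The internal case is not part of the statement, since the corollary is asserted only for external $c$.)

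There is essentially no obstacle here: all of the genuine work — arranging that $\mathcal F(c)$ is simultaneously a subspace containing $\mathcal D(c)$, a locally $\mathrm{CAT}(0)$ cube complex, and a deformation retract of $c$, compatibly with the faces — has already been carried out in Lemmas~\ref{lem:compat}, \ref{lem:extra_cubes} and~\ref{lem:external_crush}. What remains is only the elementary topological fact that a (strong) deformation retract of a contractible space is contractible, which requires no further hypotheses on $c$ beyond those already in force.
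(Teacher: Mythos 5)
Your argument is exactly the paper's: Lemma~\ref{lem:external_crush} provides a strong deformation retraction of $c$ onto $\mathcal F(c)$, and since $c$ is contractible, so is $\mathcal F(c)$. The extra remarks about the completely external case and the homotopy-equivalence bookkeeping are fine but not needed; the proposal is correct and takes the same route as the paper.
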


\begin{proof}
By Lemma~\ref{lem:external_crush}, $\mathcal F(c)$ is a deformation retract 
of $c$ and is thus contractible.
\end{proof}

Our goal is to prove that replacing each external cube $c$ by $\mathcal F(c)$ 
creates a new CAT(0) cube complex.  First, we need:

\begin{lem}\label{lem:cube_complex_link_condition}
Let $c$ be an external cube.  Then $\mathcal F(c)$ is a  cube 
complex whose hyperplanes are components of intersection of $\mathcal F(c)$ with 
hyperplanes of $c$.
\end{lem}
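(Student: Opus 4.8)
The plan is to induct on $\dim c$. If $c$ is completely external then $\mathcal F(c)=c$, which is a cube complex and for which each hyperplane $K$ of $c$ meets $\mathcal F(c)$ in the single component $K\cap c$, so there is nothing to do. If $c$ is external but $\mathcal D(c)$ is connected, then $\mathcal F(c)=\mathcal D(c)$ is an honest subcomplex of $c$, hence a cube complex; and for each hyperplane $K$ of $c$ the intersection $K\cap\mathcal D(c)$ is a disjoint union of the midcubes $K\cap b$ over the completely external subcubes $b$ of $c$ crossed by $K$, whose components are precisely the hyperplanes of $\mathcal D(c)$ contained in $K$. So the substance is the case where $c$ is external and $\mathcal D(c)$ is disconnected; there the inductive data are the cube complexes $\mathcal F(c')$ on the faces $c'\in\faces'(c)$, which are mutually compatible by Lemma~\ref{lem:compat} ($\mathcal F(c)\cap c'=\mathcal F(c')$) and Lemma~\ref{lem:overlap}.

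In that case I would read the cube structure of $\mathcal F(c)$ off its construction $\mathcal F(c)=\big(\bigcup_{c'\in\faces'(c)}\mathcal F(c')\big)\cup\big(\bigcup_w\mathcal S(w)\big)\cup\{\text{extra completely external cubes}\}$, declaring the cubes of $\mathcal F(c)$ to be (i) all cubes of the $\mathcal F(c')$, $c'\in\faces'(c)$; (ii) all $\mathcal S(w)$, for $w$ a completely external subcube of $\bar h$, regarded as combinatorial cubes (their $\ell_2$--realisations as boxes inside the convex slab $h\times[-\frac{\sqrt{\kappa}}{2},\frac{\sqrt{\kappa}}{2}]$ are immaterial to the combinatorial structure and are dealt with by later results); and (iii) all completely external subcubes of $c$, each of which by Lemma~\ref{lem:extra_cubes}, assertion~\eqref{item:proper_containment_restriction}, either lies in some $c'\in\faces'(c)$ or is a face of the single extra cube $f_1=\bar h\times s$. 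This collection is closed under passing to faces — faces of $\mathcal F(c')$--cubes are $\mathcal F(c')$--cubes by induction, the faces of $\mathcal S(w)$ are the $\mathcal S(w')$, the $w'$, and the $\bar{w'}$ for subcubes $w'\le w$, and faces of completely external cubes are completely external — and its union is $\mathcal F(c)$ by definition.

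The heart of the proof is checking that these cubes are glued face-to-face, and I expect this to be the main obstacle. Intersections of two completely external subcubes, and of two slabs $\mathcal S(w),\mathcal S(w')$ (which meet in $\mathcal S(w\cap w')$ inside $h\times[-\frac{\sqrt{\kappa}}{2},\frac{\sqrt{\kappa}}{2}]$), are immediate. For a slab $\mathcal S(w)$ against a completely external subcube $f$ of $c$, the point is that $\interior{\mathcal S(w)}$ lies in the open region of $c$ deleted in forming $\mathcal D(c)$, so $\mathcal S(w)\cap f$ is a face of $\mathcal S(w)$ (this step uses the definition of $\bar h$ and Lemma~\ref{lem:extra_cubes}, and is the most delicate part of the verification). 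For a cube of $\mathcal F(c')$ against a cube of $\mathcal F(c'')$, or against some $\mathcal S(w)$, one pushes the intersection into a lower-dimensional fundament via $\mathcal F(c)\cap c'=\mathcal F(c')$ and $\mathcal S(w)\cap c'\subseteq\mathcal S(w\cap c')\subseteq\mathcal F(c')$ (Lemmas~\ref{lem:compat} and~\ref{lem:overlap}) and invokes the inductive hypothesis; the extra cube $f_1$ is handled the same way, using that it meets $\bigcup_{c'\in\faces'(c)}c'$ in a union of faces of the $\mathcal F(c')$, as recorded in the final part of Lemma~\ref{lem:extra_cubes}. With the quoted lemmas in hand this is a finite, if intricate, case analysis.

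Finally, for the hyperplane statement, note that the $1$--cubes of $\mathcal F(c)$ are either $1$--cubes of $c$ (lying in $\mathcal D(c)$) or ``diagonal'' $1$--cubes $\mathcal S(v)$ with $v$ a $0$--cube of $\bar h$ (these occur exactly when $\kappa\ge2$). A hyperplane of $\mathcal F(c)$ dual to an ordinary $1$--cube $e$ is a union of midcubes each contained in the hyperplane $K$ of $c$ dual to $e$ — by the subcomplex structure of $\mathcal D(c)$ on those pieces, and, on the $\mathcal F(c')$ pieces, by the inductive hypothesis together with the fact that hyperplanes of a face $c'$ extend to hyperplanes of $c$ — and it is a full component of $K\cap\mathcal F(c)$ because that intersection is a disjoint union of midcubes. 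A hyperplane of $\mathcal F(c)$ dual to the diagonal direction is the union of the central midcubes of the slabs $\mathcal S(w)$; each such midcube lies in $K_1\cap\dots\cap K_\kappa\subseteq K_1$, where $K_1,\dots,K_\kappa$ are the hyperplanes of $c$ separating $h$ from $\bar h$, and it is a component of $K_1\cap\mathcal F(c)$ since it lies in the open region of $c$ deleted in forming $\mathcal D(c)$ and hence abuts no other midcube of $\mathcal F(c)$ inside $K_1$. This completes the induction.
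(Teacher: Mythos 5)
Your overall strategy — induct on $\dim c$, read a cube structure off the pieces $\mathcal F(c')$ for $c'\in\faces'(c)$, the slabs $\mathcal S(w)$, and the extra completely external cubes, then check face-to-face gluing — is in the same spirit as the paper's. The paper packages the bookkeeping differently: it associates to each cube $d$ of $\mathcal F(c)$ the smallest external subcube $f$ of $c$ containing it ($f=d$ if $d$ is completely external, $f$ the $\ell_1$-hull of $w\cup\bar w$ if $d=\mathcal S(w)$), then handles $d\cap d'$ by cases on whether $f=f'$, one contains the other, or neither, via Claims~\ref{claim:equal_f}, \ref{claim:restrict_cubes} and~\ref{claim:cube_intersection}. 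This treats the two kinds of cube uniformly and avoids some of the case proliferation in your piece-based enumeration.

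There is, however, a genuine gap at exactly the step you flag. You write that since $\interior{\mathcal S(w)}$ lies in the region deleted in forming $\mathcal D(c)$, the intersection $\mathcal S(w)\cap f$ with a completely external subcube $f$ is a face of $\mathcal S(w)$. Interior-disjointness gives only $\mathcal S(w)\cap f\subseteq\partial\mathcal S(w)$, which is not the same as being a single face of the box. What actually makes it a face is the coordinate structure: $f$ is cut out by pinning coordinates of $c$ to $\pm\frac12$, and imposing these constraints on $\mathcal S(w)=w\times[-\frac{\sqrt\kappa}{2},\frac{\sqrt\kappa}{2}]$ either kills it, pins the slab parameter to an endpoint (giving a face of $w$ or $\bar w$), or leaves it free (giving $\mathcal S(w')$ for a face $w'$ of $w$). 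This is precisely the content of the paper's Claim~\ref{claim:restrict_cubes}, proved by a case analysis on whether $c''$ meets $w$, $\bar w$, or both. A second, related issue: your "push to a lower-dimensional fundament" step for cubes from $\mathcal F(c')$ versus $\mathcal F(c'')$ quietly uses that if $d$ is a cube of $\mathcal F(c')$ then $d\cap(c'\cap c'')$ is a cube of $\mathcal F(c'\cap c'')$. The subspace equality $\mathcal F(c')\cap(c'\cap c'')=\mathcal F(c'\cap c'')$ from Lemma~\ref{lem:compat} plus "$\mathcal F(c'\cap c'')$ is a cube complex by induction" does not by itself identify the two cube structures; you again need the restriction statement (Claim~\ref{claim:restrict_cubes}). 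Both gaps are filled by stating and proving that restriction lemma up front, as the paper does, before the intersection case analysis; without it, the "finite, if intricate, case analysis" doesn't close.
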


\begin{proof} If $c$ is completely external, then the conclusion is
  obvious, we therefore assume that $\mathcal F(c) \neq c$.  The cubes
  of $\mathcal F(c)$ are of two types: sub-cubes of $c$ belonging to
  $\mathcal F(c)$, or cubes of the form $\mathcal S(w)$, where $w$ is
  a completely external cube of $c$ (recall definition of
  $\mathcal F_1(c)$).  To see that this set of cubes makes
  $\mathcal F(c)$ a cube complex, we must check that if $d,d'$ are
  cubes of the above two types, then $d\cap d'$ is a cube of one of
  the above two types.

For each $d$ as above, we define an associated sub-cube $f$ of $c$ as follows.  
If $d$ is a sub-cube of $c$, then $d$ 
is completely external, by Lemma~\ref{lem:no_interior}.  In this case, define $f=d$.  
Otherwise, 
if $d=\mathcal S(w)$ for some $w$, let $f$ be the smallest sub-cube of $c$ containing 
the two 
parallel copies of $w$ of whose union $\mathcal S(w)$ is the $\ell_2$--convex hull.  
Again, $f$ is 
external because, by the definition of $\mathcal F(c)$, the subspace $\mathcal S(w)$ 
contains a persistent corner of $c$.

Define $f'$ analogously for $d'$.  Then $d$ is a cube of
$\mathcal F(f)$ and $d'$ is a cube of $\mathcal F(f')$, by
Lemma~\ref{lem:compat}. First consider the case where $f=f'$:

\begin{claim}\label{claim:equal_f}
If $f=f'$, then $d=d'$.     
\end{claim}

\renewcommand{\qedsymbol}{$\blacksquare$}
\begin{proof}[Proof of Claim~\ref{claim:equal_f}]
  If $d,d'$ are both sub-cubes of $c$, this is clear.  Otherwise,
  suppose $d=\mathcal S(w)$.  This means that, letting
  $h,\bar h\subset c$ be the persistent, salient subcubes of $c$
  respectively, $w$ is a completely external cube of $\bar h$,
  $\bar w\subset h$ is the parallel copy of $w$ separated from $w$ by
  exactly those abutting hyperplanes of $c$ not crossing $h$, and
  $\mathcal S(w)$ is the $\ell_2$--convex hull of $w\cup\bar w$, and
  $f$ is the $\ell_1$--convex hull of $w\cup\bar w$.  Now,
  $f\cap h=\bar w$ and $f\cap \bar h=w$. Indeed, on the one hand, the
  containment $\bar w \subset f\cap h$ is obvious. On the other hand
  if $f \cap h \supsetneq \bar w$, then the intersection must contain a 1-cube $e$ not in
  $\bar w$ but in $h$. By definition $e$ is dual to a hyperplane not
  separating $w,\bar w$, which means it can't be on an $\ell_1$
  geodesic connecting $w$ and $\bar w$.

  Thus, any ``diagonal'' cube $\mathcal S(w')$ intersecting $f$ has
  $\mathcal S(w')\cap f\subseteq \mathcal S(w)$.  So, if
  $d'=\mathcal S(w')$ for some $w'$, then $d'\subseteq d$ and, by
  symmetry, $d\subseteq d'$, i.e. $d=d'$.

  The remaining case is where $f=f'=d'$ and $d=\mathcal S(w)$. By
  Lemma \ref{lem:overlap} $\mathcal F(f) = f \cap \mathcal
  F(c)$, but since $d'$ is assumed to be completely external $\mathcal
  F(f)=f$. We now have that  $w\cup \bar w$ are not contained in a
  codimension--1 face of $f$. Since $\bar w$ is a subcube of the
  persistent subcube $h(f)$ and $w$ is a subcube of the
  salient subcube $\bar h(f)$, it follows by Lemma
  \ref{lem:extra_cubes}, assertion~\eqref{item:l_or_equal},
  that $\mathcal D(f)$ is disconnected -- contradicting that $f$ is
  completely external.
\end{proof}

Next, consider the case where $f\neq f'$.

\begin{claim}\label{claim:restrict_cubes}
Let $c$ be an external cube.  Let $c''$ be an external sub-cube of 
$c$.  Let $d$ be a cube of $\mathcal F(c)$.  Then $d\cap c''$ is either empty 
or a cube of $\mathcal F(c'')$.  Hence $d\cap\mathcal F(c'')$ is either empty or a cube of $\mathcal F(c'')$.     
\end{claim}

\begin{proof}[Proof of Claim~\ref{claim:restrict_cubes}]
Suppose that 
$d\cap c''\neq\emptyset$.  Then $\mathcal F(c'')=\mathcal F(c)\cap c''$ by 
Lemma~\ref{lem:compat}.  Hence 
the cubes of $\mathcal F(c'')$ are the cubes of $\mathcal F(c)$ that lie in 
$c''$.  Thus it suffices to check that $d\cap c''$ is a subcube of $d$.

If $d$ is completely external, then $d\cap c''$ is a completely
external cube of $c''$, and is thus a cube of
$\mathcal D(c'')\subset \mathcal F(c'')$, as required.

Next, suppose that $d=\mathcal S(w)$, where $w$ is some completely
external cube of some salient cube $\bar h$.  Let $\bar w$ be the cube
of the corresponding persistent cube $h$ with the property that
$\mathcal S(w)$ is the $\ell_2$--convex hull of $w\cup \bar w$.  If
$w,\bar w$ both intersect $c''$, then
$\mathcal S(w\cap c'')=\mathcal S(w)\cap c''=d\cap c''$ is a subcube
of $d$.  Otherwise, $c''$ has nonempty intersection with only one of
$w,\bar w$. Without loss of generality $c'' \cap w \neq \emptyset$ and
$c'' \cap \bar w = \emptyset$. Since $c''$ is a subcube of $c$, by
definition of $\mathcal S(w)$, we have
$\mathcal S(w)\cap c''=w\cap c''$, which is again a subcube of $d$.
This proves the claim.
\end{proof}

\begin{claim}\label{claim:cube_intersection}
Let $c,c'$ be distinct external cubes, neither of which contains the other.  Let $d,d'$ be cubes of $\mathcal F(c),\mathcal 
F(c')$ respectively.  Then $d\cap d'$ is either empty or is a cube of $\mathcal F(c)$ and $\mathcal 
F(c')$.     
\end{claim}

\begin{proof}[Proof of Claim~\ref{claim:cube_intersection}]
  Let $c''=c\cap c'$, which (if nonempty) is an external sub-cube of
  $c$ and $c'$ by Lemma \ref{lem:external_cube_intersections}.  By
  Lemma~\ref{lem:compat},
  $\mathcal F(c)\cap\mathcal F(c')=\mathcal F(c'')$.  Hence
  $d\cap d'=(d\cap\mathcal F(c''))\cap(d'\cap \mathcal F(c''))$.  By
  Claim~\ref{claim:restrict_cubes}, $d\cap\mathcal F(c'')$ and
  $d'\cap \mathcal F(c'')$ are subcubes of $\mathcal F(c'')$.  Let
  $t,t'$ be the subcubes of $c''$ that are the $\ell_1$--convex hulls
  of $d\cap \mathcal F(c''),d'\cap \mathcal F(c'')$ respectively.
  Note that $t,t'$ are external.  So, by
  Claim~\ref{claim:restrict_cubes}, $d\cap\mathcal F(c'')$ and
  $d'\cap\mathcal F(c'')$ are cubes of $\mathcal F(t),\mathcal F(t')$
  respectively.  Note that $t,t'$ are proper subcubes of $c,c'$ (and
  therefore of lower dimension) since they are contained in $c''$,
  which is a proper subcube of $c$ and $c'$ by hypothesis.

  So, if $t\neq t'$, then by induction on dimension,
  $(d\cap\mathcal F(c''))\cap(d'\cap \mathcal F(c''))$ is a cube of
  $\mathcal F(t)\cap \mathcal F(t')$ (and hence of
  $\mathcal F(c)\cap \mathcal F(c')$).  If $t=t'$, then it follows
  from Claim~\ref{claim:equal_f} above that
  $d\cap\mathcal F(c'')=d'\cap \mathcal F(c'')$.
\end{proof}
\renewcommand{\qedsymbol}{$\Box$}

Recall that $d,d'$ are cubes of $\mathcal F(c)$, contained respectively in external cubes $f,f'$ of $c$ defined 
above.  Moreover, we are considering the case where $f\neq f'$.  Suppose that neither $f$ nor $f'$ contains the other.  Now, 
$d$ is a cube of $\mathcal F(f)=\mathcal F(c)\cap f$ and $d'$ is a cube of $\mathcal F(f')=\mathcal F(c)\cap f'$, by 
Claim~\ref{claim:restrict_cubes}.  Hence $d\cap d'$ is either empty or a cube of $\mathcal F(f')$, and hence of $\mathcal 
F(c)$, by Claim~\ref{claim:cube_intersection}.  

Finally, consider the case where $f\subsetneq f'$.  In this case, $d$
is a cube of $\mathcal F(f)$, by Claim~\ref{claim:restrict_cubes}.
Moreover, $d'\cap f$ is either empty or a cube of $\mathcal F(f)$, by
Claim~\ref{claim:restrict_cubes}.  Since $d\cap d'=d\cap d'\cap f$,
either $d\cap d'=\emptyset$, or $d\cap d'$ is the intersection of the
cubes $d\cap f$ and $d'\cap f$ of $f$.  Since $f$ is a proper face of
$c$, induction on dimension yields that $d\cap d'$ is a cube of
$\mathcal F(f)$ and hence of $\mathcal F(c)$.

This completes the proof that $\mathcal F(c)$ is a cube complex. 

\textbf{Hyperplanes:} We have shown that $\mathcal F(c)$ is a 
cube complex.  The statement about hyperplanes is immediate from the definition 
of the cubes of $\mathcal F(c)$.
\end{proof}

\begin{lem}\label{lem:cube_intersection}
Let $c$ be an external cube.  Then $\mathcal F(c)$ is a CAT(0) cube complex.
\end{lem}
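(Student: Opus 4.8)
The plan is to verify Gromov's link condition. By Lemma~\ref{lem:cube_complex_link_condition} the space $\mathcal F(c)$ is a cube complex, and by Corollary~\ref{cor:simply_connected} it is contractible, hence simply connected, so it will be CAT(0) provided the link of each of its $0$--cubes is a flag simplicial complex. The $0$--cubes of $\mathcal F(c)$ are exactly the $0$--cubes of $c$. I will induct on $\dimension c$; the cases $\dimension c\le 1$ and, more generally, $c$ completely external (so that $\mathcal F(c)=c$ is a cube) are immediate.

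The key elementary point is that $\mathcal D(c)$ is a \emph{full} subcomplex of the cube $c$: by definition a sub-cube of $c$ lies in $\mathcal D(c)$ if and only if all of its $1$--cubes are external, which is exactly the condition that it be completely external. Hence for any $0$--cube $v$, $\link(v,\mathcal D(c))$ is a full subcomplex of the simplex $\link(v,c)$, hence is itself a simplex, in particular flag. When $\mathcal D(c)$ is connected, $\mathcal F(c)=\mathcal D(c)$ and we are done.

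Suppose now that $\mathcal D(c)$ is disconnected; write $h=h(c)$, $\bar h=\bar h(c)$. The completely external cubes adjoined in passing from $\mathcal F_1(c)$ to $\mathcal F(c)$ already lie in $\mathcal D(c)$, and (using Lemma~\ref{lem:overlap} to identify the diagonal cubes of faces of $c$ with restrictions of those of $c$ itself) $\mathcal F_0(c)\subseteq\mathcal D(c)\cup\bigcup_w\mathcal S(w)$; hence $\mathcal F(c)=\mathcal D(c)\cup\bigcup_w\mathcal S(w)$, the union over completely external sub-cubes $w$ of $\bar h$, where $\mathcal S(w)\cong w\times[0,1]$ is the diagonal cube joining $w$ to its parallel copy in $h$. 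Every $0$--cube of every $\mathcal S(w)$ lies in $h\cup\bar h$, so for $v\notin h\cup\bar h$ we have $\link(v,\mathcal F(c))=\link(v,\mathcal D(c))$, which is flag by the previous paragraph. It remains to treat $v\in h$; the case $v\in\bar h$ is identical after interchanging $h$ and $\bar h$. Let $\delta_v$ be the direction at $v$ of the (unique, if any) $1$--cube of $\mathcal F(c)$ at $v$ that is not a $1$--cube of $c$ --- a diagonal joining $v$ to the $0$--cube $w_v$ of $\bar h$ opposite it; if there is no such $1$--cube then $\link(v,\mathcal F(c))=\link(v,\mathcal D(c))$ is flag and we are done. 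Via the product structures $\mathcal S(w)\cong w\times[0,1]$, the union of those $\mathcal S(w)$ that contain $v$ is identified with $\cstar(w_v,\mathcal D(\bar h))\times[0,1]$, with $v$ corresponding to an endpoint; consequently the closed star of $\delta_v$ in $\link(v,\mathcal F(c))$ is the cone with apex $\delta_v$ over $\link(w_v,\mathcal D(\bar h))$, which is flag because $\mathcal D(\bar h)$ is a full subcomplex of the cube $\bar h$. Thus $\link(v,\mathcal F(c))$ is the union of the two flag complexes $\link(v,\mathcal D(c))$ and $\delta_v\ast\link(w_v,\mathcal D(\bar h))$, glued along $\link(w_v,\mathcal D(\bar h))$.

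This union is flag provided $\link(w_v,\mathcal D(\bar h))$, embedded in $\link(v,\mathcal D(c))$ by sending each external $1$--cube $u\subset\bar h$ at $w_v$ to its parallel copy in $h$, is a \emph{full} subcomplex: then any clique in $\link(v,\mathcal F(c))$ either misses $\delta_v$, and spans a simplex by flagness of $\link(v,\mathcal D(c))$, or contains $\delta_v$, in which case its remaining vertices form a clique in $\link(w_v,\mathcal D(\bar h))$, hence span a simplex there, which together with $\delta_v$ spans a simplex of the cone. Unwinding the identifications, this fullness is precisely the assertion: \emph{if $u_1,\dots,u_k$ are external $1$--cubes of $\bar h$ incident to a common $0$--cube, then the sub-cube of $\bar h$ they span is completely external}, so that the corresponding diagonal cube is genuinely a cube of $\mathcal F(c)$. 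I expect this to be the main obstacle. It is immediate when $\mathcal F(c)\ne\mathcal F_1(c)$, since then $\bar h$ is completely external by Lemma~\ref{lem:extra_cubes}, assertion~\eqref{item:l_or_equal}; in general one argues from the defining property of the salient cube $\bar h(c)$ together with Lemma~\ref{lema:all_persistent} (whereby the $0$--cubes of $h(c)$ are persistent corners of $c$, hence have all incident $1$--cubes external), transporting externality from $h$ across the region $h\times\left[-\frac{\sqrt{\kappa}}{2},\frac{\sqrt{\kappa}}{2}\right]$ separating $h$ from $\bar h$ by arguments of the kind used inside Lemma~\ref{lem:extra_cubes} (in particular via Remark~\ref{rem:codim_1_external}). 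Once this is established, the induction on $\dimension c$ closes and $\mathcal F(c)$ is a CAT(0) cube complex.
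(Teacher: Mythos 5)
Your high-level strategy is sound and parallel to the paper's: Lemma~\ref{lem:cube_complex_link_condition} plus Corollary~\ref{cor:simply_connected} reduce matters to checking the link condition, and your decomposition of $\link(v,\mathcal F(c))$ in the disconnected case as the union of $\link(v,\mathcal D(c))$ and the cone $\delta_v\ast\link(w_v,\mathcal D(\bar h))$, glued along a copy of $\link(w_v,\mathcal D(\bar h))$, is a correct and clean reformulation of what the paper does by hand with cliques.  However, as written the argument has two gaps, one of which is a genuine error in a deduction.

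The error is the claim that because $\mathcal D(c)$ is a ``full'' subcomplex of $c$ (a subcube is in $\mathcal D(c)$ iff its $1$--skeleton is), the link $\link(v,\mathcal D(c))$ is a full subcomplex of the simplex $\link(v,c)$ and hence a simplex.  Fullness at the level of $1$--skeleta does not descend to links: for $\link(v,\mathcal D(c))$ to be full in $\link(v,c)$ one would need that whenever all $1$--cubes of a subcube $\sigma\ni v$ \emph{incident to $v$} are external, then $\sigma$ is completely external, and that can fail.  For instance, take a $3$--cube $c=[0,1]^3$ with a single panel $P$ meeting $c$ in the face $\{x=1\}$ with abutting hyperplane dual to the $y$--direction, so the internal $1$--cubes are the two $y$--edges at $x=1$.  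At $v=(0,0,0)$ all three incident edges are external, but $c$ itself is not completely external, and $\link(v,\mathcal D(c))$ is a path on three vertices, not a $2$--simplex.  It does happen to be flag, but this is not because it is a simplex, and establishing flagness of $\link(v,\mathcal D(c))$ actually requires an argument.  The paper does this directly: given a clique $e_1,\dots,e_k$ at $v$ with pairwise completely external squares $s_{ij}$, it uses the $s_{\ell j}$ to place an external edge parallel to $e_\ell$ in every codimension--$1$ face of $\prod_i e_i$, and then invokes Lemma~\ref{lem:how_panels_intersect_cubes}.  That step cannot be replaced by the fullness slogan.

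The second gap you identify yourself: the fullness of the embedded copy of $\link(w_v,\mathcal D(\bar h))$ inside $\link(v,\mathcal D(c))$ reduces to showing that the subcube of $\bar h$ spanned by external $1$--cubes $u_1,\dots,u_k$ of $\bar h$ at $w_v$ is completely external.  You correctly call this ``the main obstacle'' and point at Remark~\ref{rem:codim_1_external} and Lemma~\ref{lema:all_persistent}, but you do not prove it; this is precisely the content of the final, and hardest, part of the paper's proof, where one takes an arbitrary edge $\bar e$ of $\bar c'=\prod u_i$, passes to the smallest subcube $t$ of $c$ containing $c'\cup\bar c'$, and compares $\bar e_i$ with the diagonally opposite external edge $\ddot e_i$ of $c'$ via Remark~\ref{rem:codim_1_external} to conclude $\bar e$ is external.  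Filling both of these gaps would essentially reproduce the paper's proof; as it stands the argument is incomplete and contains a false intermediate claim.
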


\begin{proof}
By Lemma~\ref{lem:cube_complex_link_condition}, $\mathcal F(c)$ is a cube complex.  By Corollary~\ref{cor:simply_connected}, 
$\mathcal F(c)$ is simply connected.  So, in view of~\cite[Theorem 5.20]{ bridson2011metric} we just have to check that the 
given cubical structure on $\mathcal F(c)$ has the property that each vertex-link is a flag complex.

Let $v$ be a vertex of $c$.  Consider a clique in the link of $v$ in
$\mathcal F(c)$. Then $v$ is contained in $e_1,\ldots,e_k$, which are
(necessarily external) $1$--cubes of $\mathcal F(c)$ that are
$1$--cubes of $c$, and $v$ is contained in $f_1,\ldots,f_r$, which are
$1$--cubes of $\mathcal F(c)$ that are ``diagonal'' (i.e. $1$--cubes
of $\mathcal F(c)$ but not of $c$).  Since we are considering a
clique, for all $i,j$, we have that $e_i,e_j$ span a $2$--cube of
$\mathcal F(c)$, and the same is true for $f_i,f_j$ and $e_i,f_j$.

Since $c$ is CAT(0), the $1$--cubes $e_1,\ldots,e_k$ span a 
$k$--dimensional sub-cube $c'$ of $c$ that contains a collection of 
$k\choose{2}$ intersecting completely external $2$--cubes $s_{ij}$, with 
$s_{ij}$ spanned by $e_i,e_j$.  Then for each $\ell\le k$, the union of the 
$s_{ij}$ contains $k$ $1$--cubes parallel to $e_\ell$, all of which are 
external.  Let $f$ be a codimension--$1$ face of $c'$ containing $1$--cubes 
parallel to $e_\ell$.  For each hyperplane $H$ crossing $c'$ which is not dual to $e_\ell$, there is a 
$1$--cube $e'_\ell(H)$ parallel to $e_\ell$ and separated from $e_\ell$ by $H$. 
 In particular, $f$ contains an external $1$--cube parallel to $e_\ell$.  It 
follows from Lemma~\ref{lem:how_panels_intersect_cubes} that $c'$ is completely 
external, and hence $c'\subset\mathcal F(c)$.

Next, note that $r\leq 1$.  Indeed, by construction, each $2$--cube of 
$\mathcal F(c)$ that is not a sub-cube of $c$ has the form $d\times I$, where 
$d$ is a $1$--cube of $c$ and $I$ is a diagonal interval.  Hence $f_i,f_j$ 
cannot span a $2$--cube of $\mathcal F(c)$, so $r\leq 1$. 

If $r=0$, the flag condition holds since $c'\subset\mathcal F(c)$. Now
suppose $r=1$.  For each $i$, the $1$--cubes $e_i,f_1$ of
$\mathcal F(c)$ span a (diagonal) $2$--cube $\sigma_i$ of
$\mathcal F(c)$.  Now, by the definition of $\mathcal F(c)$, the cube
$\sigma_i$ is the convex hull of $e_i,\bar e_i$, where $e_i,\bar e_i$
are parallel external $1$--cubes lying in $h(c),\bar h(c)$ or vice
versa.  Indeed, every ``diagonal'' square of $\mathcal F(c)$ is
spanned by a ``diagonal'' $1$--cube and a $1$--cube of $h(c)$.  By
interchanging the names of $e_i,\bar e_i$ if necessary, we can assume
that $e_i\subset h(c)$ for all $i$ and $\bar e_i\subset\bar h(c)$.
Hence, by Lemma~\ref{lema:all_persistent}, both $0$--cubes of $e_i$
are persistent corners.  It follows that $c'\subseteq h(c)$.  Indeed,
$c'$ is spanned by $1$--cubes $e_1,\ldots,e_k$, all of whose
$0$--cubes are persistent corners, and the convex hull of this set of
$0$--cubes is contained in $h(c)$ and, on the other hand, equal to
$c'$.

Let $\bar c'=\prod_{i=1}^k\bar e_i$, so $\bar c'\subseteq \bar h(c)$.  Each 
$0$--cube $v$ of $\bar c'$ is thus a completely 
external cube of $\bar h(c)$, and is thus joined by a diagonal $1$--cube 
(parallel to $f_1$) to a $0$--cube of $c'$.

Now, if $\bar c'$ is completely external, then by construction, $\mathcal F(c)$ 
contains a cube $c'\times f_1=\bar c'\times f_1$ spanned by 
$e_1,\ldots,e_k,f_1$, so the link condition is satisfied.  So, it suffices to 
show that $\bar c'$ is completely external.

To that end, it suffices to show that every $1$--cube $\bar e$ of $\bar c'$ is 
external.  Let $t$ be the smallest subcube of $c$ containing $c'\cup\bar c'$. 
  Let $e$ be the $1$--cube of $c'$ that is parallel to $\bar e$ and separated 
from $\bar e$ by exactly those hyperplanes separating $c',\bar c'$.

There exists $i$ such that $e$ is parallel to $e_i$.  Let $\ddot e_i$ be the 
$1$--cube of $c'$ that is parallel to $e_i$ and separated from $e_i$ by all 
hyperplanes of $c'$ not crossing $e_i$.  Since $c'$ is completely external, the 
$1$--cube $\ddot e_i$ is external.

Now, $\ddot e_i$ is separated from $\bar e_i$ by all hyperplanes of $t$ not 
crossing $\bar e_i$.  Moreover, since $\bar e_i$ lies in $\sigma_i$, it is 
external.  Hence, by Remark~\ref{rem:codim_1_external}, every $1$--cube of $t$ 
parallel to $\bar e_i$ is external.  In particular, $\bar e$ is external, as 
required.  
\end{proof}

\section{Collapsing $\Psi$ along extremal 
panels}\label{sec:cube_def_ret_extremal}

We can now prove the main theorem about panel collapse.

\begin{thm}[Collapsing extremal panels]\label{thm:pasting}
Let $\Psi$ be a finite-dimensional CAT(0) cube complex and let $\mathcal P$ be a 
collection of extremal panels with the no facing panels property.  Then there 
is a CAT(0) cube complex $\Psi_\bullet\subset\Psi$ so that:
\begin{itemize}
 \item there is a deformation retraction $\Psi\defRet\Psi_\bullet$;
 \item for each hyperplane $K$ of $\Psi$, the subspace $K\cap\Psi_\bullet$ is 
the disjoint union of hyperplanes of $\Psi_\bullet$;
 \item each hyperplane of $\Psi_\bullet$ is a component of $\Psi\cap K$ for some 
hyperplane $K$ of $\Psi$;
 \item for each $P\in\mathcal P$, the inside of $P$ is disjoint from 
$\Psi_\bullet$.
\end{itemize}
\end{thm}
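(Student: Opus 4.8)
The plan is to build $\Psi_\bullet$ by replacing each external cube $c$ of $\Psi$ by its fundament $\mathcal F(c)$, and to assemble these pieces using the compatibility machinery established in Section~\ref{sec:cube_def_ret}. First I would set
$$\Psi_\bullet=\bigcup_{c}\mathcal F(c),$$
the union taken over all cubes $c$ of $\Psi$ (internal and external), recalling that for internal $c$ we have $\mathcal F(c)=\mathcal D(c)$. The fact that this is a well-defined subspace with a cubical structure is essentially Lemma~\ref{lem:cube_complex_link_condition} together with the intersection formula $\mathcal F(b)\cap\mathcal F(c)=\mathcal F(b\cap c)$ (the consequence of Lemma~\ref{lem:compat} recorded after its statement, valid whenever $b\cap c$ is external; when $b\cap c$ is internal one checks directly from Definition~\ref{defn:internal} that $\mathcal F(b)\cap\mathcal F(c)=\mathcal D(b\cap c)=\mathcal F(b\cap c)$). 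This gives that the cubes of the various $\mathcal F(c)$ fit together along faces, so $\Psi_\bullet$ is a cube complex, and by Lemma~\ref{lem:cube_complex_link_condition} each of its hyperplanes is a component of an intersection $K\cap\Psi_\bullet$ for $K$ a hyperplane of $\Psi$. For the converse direction — that $K\cap\Psi_\bullet$ is a \emph{disjoint union} of hyperplanes of $\Psi_\bullet$ — I would argue that the midcubes of $\Psi_\bullet$ assemble into walls: a hyperplane of $\Psi$ meets $\Psi_\bullet$ in a union of midcubes of cubes of $\Psi_\bullet$ (both honest subcubes of $\Psi$ and diagonal cubes $\mathcal S(w)$, the latter contributing midcubes because $\mathcal S(w)=w\times I$ has a midcube parallel to $w$ coming from each hyperplane crossing $w$), and local finiteness of the dimension plus the link condition force these to glue into embedded hyperplanes; disjointness of distinct components is automatic once we know $\Psi_\bullet$ is CAT(0).

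The deformation retraction $\Psi\defRet\Psi_\bullet$ is the heart of the argument, and I would obtain it by gluing the maps $\Delta_c$ from Lemma~\ref{lem:external_crush} over all external cubes, while retracting internal cubes onto $\mathcal D(c)$ by the obvious radial collapse away from the internal open subcubes. The key point is that the ``moreover'' clause of Lemma~\ref{lem:external_crush} makes these $\Delta_c$ compatible across faces after a reparametrisation of the time coordinate by codimension: processing cubes in order of increasing dimension, one first collapses all the $1$-cubes (trivial, they are completely external or entirely deleted), then at stage $d$ runs the $\Delta_c$ for $d$-dimensional $c$ during the time subinterval dictated by the $2^{-\ell}$ rescaling, consistently with what has already been done on the boundary $\partial c$. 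Because $\Psi$ is finite-dimensional, this process terminates after finitely many stages and yields a single strong deformation retraction $\Psi\times[0,1]\to\Psi$ whose time-$1$ map has image exactly $\bigcup_c\mathcal F(c)=\Psi_\bullet$. Equivariance is not required in this theorem (it enters only in the applications, via Lemma~\ref{lem:canonical}), so I need only worry about continuity of the glued map, which follows from the matching-on-overlaps guaranteed by the ``moreover'' clause and the analogous statement for $\mathcal F(c)\cap c'=\mathcal F(c')$.

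Finally, CAT(0)ness of $\Psi_\bullet$: it is simply connected because it is a deformation retract of the simply connected $\Psi$, it is nonpositively curved because each vertex link is flag — this is a local statement, and the link of a vertex $v$ in $\Psi_\bullet$ is built from the links of $v$ in the $\mathcal F(c)$ containing it, each of which is flag by (the proof of) Lemma~\ref{lem:cube_intersection}, and the flag condition passes to the union because the intersection formula for fundaments controls how cliques spanning cubes in different $\mathcal F(c)$'s interact. Hence $\Psi_\bullet$ is CAT(0) by Gromov's link condition. The last bullet, that the inside of each $P\in\mathcal P$ is disjoint from $\Psi_\bullet$, is immediate from Lemma~\ref{lem:no_interior}, which says $\mathcal F(c)\cap\interior P=\emptyset$ for every cube $c$. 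I expect the main obstacle to be the continuity and well-definedness of the glued deformation retraction — specifically verifying that the time-rescaled $\Delta_c$'s agree on all overlaps $c\cap c'$ (not just codimension-$1$ ones) and with the internal-cube collapses, so that the pieces patch to a genuine homotopy rather than merely a set-theoretic map; this is exactly what the inductive ``moreover'' clause of Lemma~\ref{lem:external_crush} and the hereditary statements of Lemmas~\ref{lem:compat} and~\ref{lem:connected_hereditary} were engineered to handle, so the work is in carefully orchestrating the bookkeeping rather than in proving anything genuinely new.
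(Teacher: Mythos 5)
Your overall strategy tracks the paper's closely: define $\Psi_\bullet$ as a union of fundaments, use Lemmas~\ref{lem:compat}, \ref{lem:cube_complex_link_condition}, \ref{lem:cube_intersection} for the cubical structure and CAT(0)ness, Lemma~\ref{lem:no_interior} for the last bullet, and glue the $\Delta_c$ from Lemma~\ref{lem:external_crush} via the ``moreover'' clause. Taking the union over all cubes rather than maximal ones is cosmetic, since $\mathcal F(c')\subseteq\mathcal F(c)$ whenever $c'\subseteq c$ with $c$ maximal (for external $c'$ by Lemma~\ref{lem:compat}; for internal $c'$ because $\mathcal F(c')=\mathcal D(c')$ is a union of completely external cubes of $c$). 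Likewise, your extra case where $b\cap c$ is internal never arises, since Lemma~\ref{lem:external_cube_intersections} already shows the intersection of external cubes is external.

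However, there is a genuine gap in your treatment of the deformation retraction. You propose to handle internal cubes separately, ``retracting internal cubes onto $\mathcal D(c)$ by the obvious radial collapse away from the internal open subcubes.'' This cannot work as stated: when $c$ is internal, $\mathcal D(c)$ is \emph{disconnected} (the abutting hyperplane of any panel to which $c$ is internal separates it; see Definition~\ref{defn:internal}), so there is no retraction of $c$ onto $\mathcal D(c)$ at all. More fundamentally, no separate treatment of internal cubes is needed, and you have missed the observation that makes the pasting go through cleanly. The paper only pastes the $\Delta_c$ over \emph{maximal} cubes $c$ (which are automatically external, since panels are codimension--$1$ in the cubes they abut), and the crucial point is that if $c'$ is a codimension--$1$ face of a maximal cube $c$ that is \emph{not} external, then $\interior{c'}$ lies in the inside of some $P\in\mathcal P$, so by extremality of $P$ the face $c'$ belongs to $c$ and to no other maximal cube. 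Hence the pieces $\Delta_c$ only need to agree along \emph{external} faces, which the ``moreover'' clause of Lemma~\ref{lem:external_crush} (together with Lemma~\ref{lem:compat}) guarantees; there is never a pasting conflict over an internal face. Without this observation, one cannot rule out two maximal cubes disagreeing on a shared internal face. Your ``processing in order of increasing dimension'' scheme also has the time-ordering backwards relative to Lemma~\ref{lem:external_crush} (where high-dimensional cubes collapse during $[0,\frac12]$ and a codimension--$\ell$ face is fixed until time $1-2^{-\ell}$), though this is more a bookkeeping confusion than a fatal error. Finally, a small slip: the midcube of $\mathcal S(w)=w\times I$ that is parallel to $w$ is the one in the diagonal direction (it sits at the centre of the interval $I$), not one ``coming from each hyperplane crossing $w$''; those latter hyperplanes give midcubes of the form $m\times I$ for $m$ a midcube of $w$.
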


\begin{proof}
Let $\Psi_\bullet$ be the union over all maximal cubes $c$ of $\Psi$ of the 
subspaces $\mathcal F(c)$.  

By Lemma~\ref{lem:no_interior}, $\Psi_\bullet$ is disjoint from the inside of 
each panel in $\mathcal P$, i.e. $\Psi_\bullet$ is disjoint from the 
interior of any cube of $\Psi$ that is internal to a panel in $\mathcal P$.

 By Corollary~\ref{cor:simply_connected} and 
Lemma~\ref{lem:cube_complex_link_condition}, 
each $\mathcal F(c)$ is a CAT(0) cube complex whose hyperplanes are components 
of intersection of $\mathcal F(c)$ with hyperplanes of $\Psi$.  The cubes of 
$\mathcal F(c)$ are either sub-cubes of $c$ or cubes of the form 
$w\times\left[-\frac{\sqrt{\kappa}}{2},\frac{\sqrt{\kappa}}{2}\right]$ (where 
$w$ is a sub-cube of $c$), which has 
dimension at most $d-1$.  By Lemma~\ref{lem:compat}, if $c,c'$ are 
intersecting external cubes, then they intersect along a common external face 
$c''$, and $\mathcal F(c)\cap\mathcal F(c')=\mathcal F(c'')$.  (The 
intersection of distinct external cubes is external since panels in $\mathcal 
P$ are extremal.)

Moreover, if 
$d,d'\subset\mathcal F(c),\mathcal F(c')$ are cubes (including diagonal cubes), 
the intersection $d\cap d'$ is either empty or a cube of $\mathcal F(c)$ and 
$\mathcal F(c')$, by Lemma~\ref{lem:cube_intersection}, so $\Psi_\bullet$ is a 
cube complex.  The definition of a hyperplane in a cube complex (a connected 
union of hyperplanes of cubes that intersects each cube in $\emptyset$ or a 
hyperplane of that cube), together with 
the above characterisation of hyperplanes in $\mathcal F(c)$, establishes the 
statements about hyperplanes of $\Psi_\bullet$. 

To see that $\Psi_\bullet$ is a CAT(0) cube complex, we must first verify that 
it is simply connected, which we will do by constructing the claimed 
deformation retraction $\Psi\defRet\Psi_\bullet$.  For each maximal cube $c$, 
any codimension--$1$ face $c'$ that is not external has the property, by 
definition, that $\interior{c}$ is contained in the inside of some panel 
$P\in\mathcal P$.  Since $P$ is an extremal panel, $c'$ is 
contained in a unique maximal cube of $\Psi$, namely $c$.  Hence the 
deformation retraction $c\defRet\mathcal F(c)$ from 
Lemma~\ref{lem:external_crush} induces a deformation retraction 
$\Psi\times[0,1]\to\Psi$ which is the identity outside of $c$ and agrees on $c$ 
with $\Delta_c$.  Lemma~\ref{lem:external_crush} also implies that these 
deformation retractions are compatible with intersections of external (and 
hence maximal) cubes, so the pasting lemma produces the desired deformation 
retraction $\Psi\defRet\Psi_\bullet$.

It remains to check that $\Psi_\bullet$ is locally CAT(0), i.e. that the link 
of each $0$--cube is a flag complex.  Let $v\in\Psi_\bullet$ be a $0$--cube, so 
that $v$ is also a $0$--cube of $\Psi$.  Let $e_1,\ldots,e_k$ be $1$--cubes of 
$\Psi$ incident to $v$ and lying in $\Psi_\bullet$, and let $f_1,\ldots,f_r$ be 
``diagonal'' $1$--cubes of $\Psi$ incident to $v$.  Suppose that for all $i,j$, 
the $1$--cubes $e_i,e_j$, and $e_i,f_j$, and $f_i,f_j$ span a $2$--cube of 
$\Psi_\bullet$.  

For $i\neq j$, since $f_i,f_j$ span a $2$--cube of $\Psi$, they must lie in a 
common external cube, but this was shown in the proof of 
Lemma~\ref{lem:cube_complex_link_condition} to be impossible.  Hence $r\leq1$.

By CAT(0)ness of $\Psi$, the $e_i$ span a $k$--cube $c'$ of $\Psi$, which is 
necessarily external, since it contains a persistent corner.  So 
Lemma~\ref{lem:cube_complex_link_condition} implies that $c'$ is completely 
external (and hence in $\Psi_\bullet$).  So, if $r=0$, then we are done.

If $r=1$, then let $d$ be the $\ell_1$ convex hull of $f_1$, which is a cube.  
Then for any $i\leq k$, the fact that $e_i,f_1$ span a $2$--cube of 
$\Psi_\bullet$ implies that $\Psi$ contains the cube $e_i\times d$.  Hence 
there is a cube $c=c'\times d$ that contains $f_1$ and all of the $e_i$.  But 
$\Psi_\bullet\cap c=\mathcal F(c)$, 
so Lemma~\ref{lem:cube_complex_link_condition} implies that $\mathcal F(c)$, 
and 
hence $\Psi_\bullet$,  contains a cube spanned by $f_1,e_1,\ldots,e_k$.  This 
completes the proof that $\Psi_\bullet$ is CAT(0).
\end{proof}

\subsection{Equivariant panel collapse}\label{sec:equivariant_panel_collapse}
Let $\Psi$ be a finite-dimensional CAT(0) cube complex and let 
$G\leq\Aut(\Psi)$ act cocompactly.  (We do not assume that $\Psi$ is locally 
finite or that the $G$--action is proper.) We also require that $G$ acts on 
$\Psi$ without inversions in hyperplanes, i.e. for each hyperplane $H$, the 
stabiliser in $G$ of $H$ stabilises both associated 
halfspaces.  This can always be assumed to hold by passing to the first cubical 
subdivision (see e.g.~\cite[Lemma 4.2]{Haglund:semisimple}).

\begin{defn}[Complexity]\label{defn:complexity}
The \emph{complexity} $\#(\Psi)$ of $\Psi$ is the tuple 
$(\#_i(\Psi))_{i=0}^{\dimension\Psi-2}$, where $\#_i(\Psi)$ is the number of 
$G$--orbits of $(\dimension\Psi-i)$--cubes in $\Psi$.  Observe that $\Psi$ is a 
tree if and only if $\#(\Psi)=\vec 0$.  Complexity is ordered lexicographically.
\end{defn}

\begin{cor}\label{cor:lower_complexity}
Suppose that $\Psi$ contains a hyperplane $H$ and a hyperplane $E$ so that 
$E\cap H$ is extremal in $H$.  Then $\Psi$ contains a $G$--cocompact subspace $\Psi_\bullet$ such that:
\begin{itemize}
 \item $\Psi_\bullet$ is a CAT(0) cube complex;
 \item for each hyperplane $K$ of $\Psi$, the subspace $K\cap\Psi_\bullet$ is the disjoint union of hyperplanes of $\Psi_\bullet$;
 \item each hyperplane of $\Psi_\bullet$ is a component of $\Psi_\bullet\cap K$ 
for some hyperplane $K$ of $\Psi$;
 \item $\#(\Psi_\bullet)<\#(\Psi)$ in the lexicographic order on $\naturals^{\dimension\Psi-2}$;
 \item the action of $G$ on $\Psi_\bullet$ is without inversions;
 \item each $g\in G$ is a hyperbolic [resp. elliptic] isometry of 
$\Psi_\bullet$ if and only if it is a hyperbolic [resp. elliptic] isometry of 
$\Psi$.
\end{itemize}
This holds in particular if $G$ is a group acting cocompactly and without 
inversions on a CAT(0) cube complex $\Psi$ under either of 
the following conditions:
\begin{itemize}
 \item some hyperplane of $\Psi$ is compact and contains more than one point;
 \item $\Psi$ contains a hyperplane $H$ so 
that $\stabilizer_G(H)$ does not act essentially on $H$.
\end{itemize}
Finally if $\Psi$ is locally finite then so is $\Psi_\bullet$.
\end{cor}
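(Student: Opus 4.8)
The plan is to deduce the corollary from Theorem~\ref{thm:pasting} applied to $\mathcal{P}=G\cdot P$, where $P=P(H^+_E)$ is the extremal panel determined by the given extremal pair, with the only genuinely new work being the complexity count.

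First I set $\mathcal{P}=G\cdot P$. Since $G$ acts without inversions in hyperplanes, Lemma~\ref{lem:block_intersection_condition} shows $\mathcal{P}$ has the no facing panels property, so Theorem~\ref{thm:pasting} applies and produces $\Psi_\bullet=\bigcup_c\mathcal{F}(c)\subset\Psi$ (union over maximal cubes $c$ of $\Psi$), a CAT(0) cube complex satisfying the two hyperplane bullets, together with a deformation retraction $\Psi\defRet\Psi_\bullet$. Each $g\in G$ permutes $\mathcal{P}$ and carries insides of panels to insides of panels, so Lemma~\ref{lem:canonical} gives $\mathcal{F}(gc)=g\mathcal{F}(c)$; hence $\Psi_\bullet$ is $G$--invariant, and the local retractions $\Delta_c$ of Lemma~\ref{lem:external_crush}, chosen on $G$--orbit representatives of maximal cubes (cube stabilisers act through finite quotients, so this can be done equivariantly) and glued using the face--compatibility in that lemma, yield a $G$--equivariant retraction $r\colon\Psi\to\Psi_\bullet$. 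Since $\Psi$ has finitely many $G$--orbits of cubes and each $\mathcal{F}(c)$ is a finite cube complex, the action of $G$ on $\Psi_\bullet$ is cocompact; if $\Psi$ is locally finite then so is $\Psi_\bullet$, as only finitely many cubes of $\Psi$ meet a given point and each contributes finitely many cubes to $\Psi_\bullet$. For the no--inversions claim: if $g$ stabilises a hyperplane $\widehat H$ of $\Psi_\bullet$ that is crossed by a genuine $1$--cube of $\Psi_\bullet$, then that $1$--cube witnesses $\widehat H\subseteq K$ for a hyperplane $K$ of $\Psi$ which $g$ must also stabilise (the $1$--cube crosses $K$), and $g$ does not invert $K$ by hypothesis; the two sides of a purely diagonal $\widehat H$ are its persistent and salient ends, and $g$ preserves this distinction because it preserves $\mathcal{P}$. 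For the dynamical bullet, cocompact actions on finite--dimensional CAT(0) cube complexes are semisimple~\cite{Haglund:semisimple}, so each $g\in G$ is elliptic or hyperbolic on each of $\Psi,\Psi_\bullet$; since $\Psi_\bullet\subseteq\Psi$, an elliptic element on $\Psi_\bullet$ is elliptic on $\Psi$, and conversely a $g$ fixing $x\in\Psi$ fixes $r(x)\in\Psi_\bullet$.

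The substantive point is the complexity inequality $\#(\Psi_\bullet)<\#(\Psi)$, and this is where I expect the bookkeeping to need care. Put $D=E\cap H$ and $d=\dimension\bur{D}=\dimension D+2\ge2$. The key extremality fact (cf.\ the proof of Lemma~\ref{lem:how_panels_intersect_cubes}) is that any cube of $\Psi$ carrying an internal $1$--cube lies inside some translate $g\bur{D}$ and so has dimension at most $d$; hence every cube of dimension $>d$ is completely external, and since no cube of $\mathcal{F}(c)$ exceeds $\dimension c$ and no diagonal cube $\mathcal{S}(w)$ attains dimension $d$ (its dimension is $\dimension w+1\le\dimension\bar h(c)+1\le\dimension c-1$ in the disconnected case), the collapse neither removes nor creates cubes of dimension $>d$. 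Thus $\#_i(\Psi_\bullet)=\#_i(\Psi)$ whenever $\dimension\Psi-i>d$. In dimension exactly $d$, every maximal cube of $g\bur{D}$ contains a $1$--cube dual to $gH$ lying in the abutting panel $gP$, hence an internal $1$--cube, so it is external but not completely external and is therefore absent from $\Psi_\bullet$ (it is not a cube of its own fundament); no new $d$--cube is created; and such a lost cube really drops the orbit count, because it is the unique maximal cube of $\Psi$ containing a maximal cube of $gP$ and hence is not a proper face of any surviving cube, and at least one such $G$--orbit exists since $D\ne\emptyset$. So $\#_{\dimension\Psi-d}(\Psi_\bullet)<\#_{\dimension\Psi-d}(\Psi)$, giving $\#(\Psi_\bullet)<\#(\Psi)$ in the lexicographic order.

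Finally, the ``in particular'' clauses supply the hypotheses of the first part: if $\Psi$ has a compact hyperplane that is not a point, Corollary~\ref{cor:compact_hyp} furnishes an extremal panel and hence a pair with $E\cap H$ extremal in $H$; and if some $\stabilizer_G(H)$ fails to act essentially on $H$, then $H$ contains a shallow halfspace, and running the argument of Lemma~\ref{lem:compact_extremal} inside that bounded halfspace of $H$ yields a hyperplane $E$ of $\Psi$ with $E\cap H$ extremal in $H$. In each case, applying the first part finishes the proof.
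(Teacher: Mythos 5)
Your argument follows the same route as the paper's proof: set $\mathcal P=G\cdot P$, invoke Lemma~\ref{lem:block_intersection_condition} to get the no facing panels property, apply Theorem~\ref{thm:pasting} to obtain $\Psi_\bullet$, use Lemma~\ref{lem:canonical} for $G$--equivariance and hence cocompactness and local finiteness, appeal to Haglund's semisimplicity for the hyperbolic/elliptic bullet, and close with the complexity count and the ``in particular'' clauses via Corollary~\ref{cor:compact_hyp} and coboundedness. Your complexity bookkeeping is in fact more explicit than the paper's one-sentence argument and is correct as written: since every cube carrying an internal $1$--cube lies in a translate of $\bur{E\cap H}$, cubes of dimension $>d=\dimension\bur{E\cap H}$ are completely external, the diagonal cubes $\mathcal S(w)$ have dimension at most $d-1$, and at least one $G$--orbit of $d$--cubes is deleted; this is exactly the justification the paper leaves implicit.

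The one place you genuinely diverge from the paper is the no-inversions bullet. You case-split on whether a hyperplane $\widehat H$ of $\Psi_\bullet$ is crossed by a genuine $1$--cube of $\Psi$ or is ``purely diagonal''; the paper instead gives a single argument, using the no facing panels property and the Helly property to produce a $\stabilizer_G(C)$--invariant nonempty intersection of panels in $\mathcal B=\bigcap_iN(H_i)$, which pins down a preserved halfspace of $C$. Your split is legitimate (and, in fact, all $1$--cubes dual to $\widehat H$ are genuine or all are diagonal, so the dichotomy is clean), but in the genuine case there is an implicit step worth flagging: from ``$g$ does not invert $K$ in $\Psi$'' you pass directly to ``$g$ does not invert $\widehat H$ in $\Psi_\bullet$''. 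Since $\widehat H$ is only one component of $K\cap\Psi_\bullet$, one must still check that the two sides of $\widehat H$ in $\Psi_\bullet$ correspond consistently to the two sides of $K$; this holds by tracking signs along the connected carrier $N(\widehat H)$ through both genuine squares and diagonal squares $d\times I$, but it is not automatic, and the paper's $\mathcal B$--based argument avoids the issue entirely. Your ``purely diagonal'' case, via preservation of the persistent/salient distinction, is morally what the paper's $\bigcap_i\OR H_i$ argument formalizes.
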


\begin{proof}
The hypotheses provide an extremal panel $P$; let $\mathcal P=G\cdot P$.  Since 
the action is without inversions, Lemma~\ref{lem:block_intersection_condition} 
implies that $\mathcal P$ has the no facing panels property.  Since 
$\mathcal P$ is $G$--invariant, for each cube $c$ and each $g\in G$, 
Lemma~\ref{lem:canonical} implies $\mathcal F(gc)=g\mathcal F(c)$, so the 
CAT(0) cube complex $\Psi_\bullet$ from 
Theorem~\ref{thm:pasting} is a $G$--invariant subspace of $\Psi$.  
Theorem~\ref{thm:pasting} also says that the hyperplanes of $\Psi_\bullet$ 
have the desired form.  

Since $G$ acts without inversions on 
$\Psi$ and hyperplanes of $\Psi_\bullet$ extend to hyperplanes of $\Psi$, the 
action of $G$ on $\Psi_\bullet$ is without inversions.  Indeed, let $C$ be a 
hyperplane of $\Psi_\bullet$.  Then $C$ is a component of the intersection of 
$\Psi_\bullet$ with pairwise-crossing hyperplanes $H_1,\ldots,H_k$ of $\Psi$, 
for some $k\leq\dimension\Psi$.  Hence $\stabilizer_G(C)$ permutes the 
hyperplanes $H_k$.   

Let $\mathcal B=\bigcap_{i=1}^k\neb(H_i)$, and observe that $\stabilizer_G(C)$ 
acts on $\mathcal B$, preserving the collection of extremal panels from 
$\mathcal P$ in $\mathcal B$.  The no facing panels property implies that these 
panels have nonempty intersection, which must be preserved by 
$\stabilizer_G(C)$.  Hence $\stabilizer_G(C)$ stabilizes $\bigcap_i\OR H_i$, 
where $\OR H_i$ is the halfspace of $\mathcal B$ associated to $H_i$ that 
contains a panel in $\mathcal P$ extremalised by $H_i$.  Since $\bigcap_i\OR 
H_i$ is one of the halfspaces of $\mathcal B\cap\Psi_\bullet$ associated to 
$C$, we see that no element of $\stabilizer_G(C)$ acts as an inversion across 
$C$.

Since there are finitely 
many $G$--orbits of cubes in $\Psi$, and each contributes a bounded number of 
cubes to $\Psi_\bullet$, the action of $G$ on $\Psi_\bullet$ is cocompact.  
Finally, let $c$ be a maximal cube of $\Psi$ that intersects the inside of 
some panel in $\mathcal P$.  Then Theorem~\ref{thm:pasting} implies that 
$\mathcal F(c)$ has dimension strictly lower than that of $c$, so 
$\#(\Psi_\bullet)<\#(\Psi)$.  Finally, suppose that $g\in G$.  
By~\cite[Theorem~1.4]{Haglund:semisimple}, since $\Psi$ is finite-dimensional, 
either $g$ 
is hyperbolic (in the $\ell_1$ and $\ell_2$ metrics) or $g$ fixes a $0$--cube 
(since the action is without inversions).  Now, each $0$--cube of $\Psi$ lies 
in $\Psi_\bullet$, by construction, and the inclusion $\Psi_\bullet\to\Psi$ is 
$G$--equivariant, so stabilisers of $0$--cubes in $\Psi$ do not change on 
passing to $\Psi_\bullet$.  Hence $g$ is hyperbolic or elliptic on 
$\Psi_\bullet$ according only to whether it was hyperbolic or elliptic on 
$\Psi$.  This proves the first part of the claim.

Now, let $G$ act cocompactly and without inversions on a CAT(0) cube complex 
$\Psi$.  If $H$ is a hyperplane of $\Psi$ that is compact 
and contains more than one point, then Corollary~\ref{cor:compact_hyp} shows 
that $\Psi$ has an extremal panel, and we can argue as above.  

Similarly, 
suppose that $\Psi$ contains a hyperplane $H$ so that $\stabilizer_G(H)$ does 
not act on $H$ essentially.  Then there is a hyperplane $E$ bounding a halfspace 
$\vec E\cap H$ in $H$ so that $\vec E\cap H$ and $(\Psi-\vec E)\cap H$ are both 
$\stabilizer_G(H)$--shallow.  

Since $G$ acts cocompactly on $\Psi$, the 
stabilizer of $H$ acts coboundedly on $H$.  To verify this, we will show that there are finitely many $\stabilizer_G(H)$--orbits of $1$--cubes of $\Psi$ dual to $H$.  Indeed, since $G$ acts on $\Psi$ with finitely many orbits of $1$--cubes, there are $1$--cubes $e_1,\ldots,e_k$ dual to $H$ so that every $1$--cube dual to $H$ has the form $ge_i$ for some $i\leq k$ and some $g\in G$.  But since $e_i$ is dual to $H$, the $1$--cube $ge_i$ is dual to $gH$.  On the other hand, since $ge_i$ is dual to $H$, we have $gH=H$, i.e. $g\in\stabilizer_G(H)$, as required.  

Hence one of  $\vec E\cap H$ and 
$(\Psi-\vec E)\cap H$ must be contained in a uniform neighbourhood of $E\cap H$.  
Hence $E$ could be chosen so that $E\cap H$ is extremal in $H$.  Thus $\Psi$ has 
an extremal panel, and we can conclude as above.

The final assertion follows since for each cube $c$, $\mathcal F (c)$
has finitely many cubes.
\end{proof}

\begin{rem}[Effect of panel collapse on hyperplanes]\label{rem:hyper_effect}
Let $\Psi$, $G$, and $\mathcal P$ be as in
Corollary~\ref{cor:lower_complexity}. Let $H,E$ be hyperplanes of
$\Psi$ so that $E\cap H$ is extremal in $H$.  Let $P$ be the extremal
panel in $N(E\cap H)$ abutted by $H$ and extremalised by $E$.  Then
$E\cap\Psi_\bullet$ is contained in the closure of $\left[E-(E\cap
  N(E\cap H))\right]\cup (E\cap H)$ (that is to say, $E$ is replaced by the 
halfspaces of $E$ induced by $N(E\cap H)$, except that $E\cap H$ might be 
added back in the event that there were diagonal cubes in $\Psi_\bullet$).  
This can be seen by inspecting the
construction of $\Psi_\bullet$ at the level of individual cubes.
\end{rem}

\section{Applications}\label{sec:examples}

\subsection{Stallings's theorem}\label{sec:main_corollary}
We now prove Theorem~\ref{thm:Stallings}.

\begin{cor}\label{cor:stallings}
Let $G$ be a finitely generated group with more than one end.  Then $G$ splits 
nontrivially as a finite graph of groups with finite edge 
groups.
\end{cor}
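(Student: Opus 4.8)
The plan is to convert the multi-endedness of $G$ into a cocompact action on a CAT(0) cube complex with compact hyperplanes, and then iterate panel collapse until a tree remains. First I would extract from the hypothesis (equivalently, from the finite separating subgraph $K$ in the modern formulation, Theorem~\ref{thm:Stallings}) a \emph{proper almost-invariant} subset $A\subseteq G$: one with $A\triangle gA$ finite for all $g\in G$, with $A$ and $A^*:=G\setminus A$ both infinite, and with finite edge coboundary $\delta A$ in a Cayley graph $X$ of $G$ on a finite generating set. Following Sageev~\cite{sageev1995ends,gerasimov1997semi} (see also~\cite{Sageev:pcmi}), let $\Psi$ be the CAT(0) cube complex dual to the wallspace $\{gA,gA^*\}_{g\in G}$. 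Then $\Psi$ is finite-dimensional~\cite{sageev1995ends}, $G$ acts on $\Psi$ with a single orbit of hyperplanes, and, since $G$ is finitely generated and there is one $G$--orbit of walls with finite coboundary, this action is cocompact; it is also essential, because almost-invariance together with properness of $A$ push $G$--orbits arbitrarily deep into every halfspace $gA$. Replacing $\Psi$ by its cubical subdivision, I may assume $G$ acts without inversions in hyperplanes, which costs nothing in cocompactness, in compactness of hyperplanes, or in the number of ends of $G$.

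The crucial point is that every hyperplane of $\Psi$ is compact. Fix the wall $W=\{A,A^*\}$ and let $\hat W$ be its dual hyperplane. The subgroup $G_A=\{g\in G:gA=A\}$ permutes the finite set $\delta A$ while preserving the orientation of each edge ``from $A$ to $A^*$'', so the kernel of the resulting map $G_A\to\mathrm{Sym}(\delta A)$ fixes every edge of $\delta A$ together with its endpoints; as $G$ acts freely on $X$ this kernel is trivial, whence $G_A$, and therefore $\stabilizer_G(\hat W)=\stabilizer_G(W)$, which contains $G_A$ with index at most $2$, is finite. Since $G$ acts cocompactly on $\Psi$ and the carrier $N(\hat W)\cong\hat W\times[-\tfrac12,\tfrac12]$ is convex (Definition~\ref{defn:carrier}), the finite group $\stabilizer_G(\hat W)$ acts cocompactly on $\hat W$, which forces $\hat W$ to be compact; by $G$--transitivity on hyperplanes, all hyperplanes of $\Psi$ are compact. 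Finally, $G$ is infinite (a finite group has no ends) and wall-stabilizers are finite, so $\Psi$ has infinitely many hyperplanes and is unbounded; being essential and not a point, the $G$--action on $\Psi$ has no global fixed point.

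Now I would iterate Corollary~\ref{cor:lower_complexity}. Whenever $\Psi$ has a hyperplane that is not a single point, that hyperplane is compact with more than one point, so Corollary~\ref{cor:lower_complexity} produces a new cocompact, inversion-free CAT(0) cube complex of strictly smaller complexity, with hyperplanes that are components of intersections with hyperplanes of the previous complex, and with the hyperbolic/elliptic type of each $g\in G$ unchanged. Stabilizers of hyperplanes only shrink under ``component of intersection'', so they remain finite and the new hyperplanes remain compact (a finite group acting cocompactly), so the hypothesis is maintained; since complexity takes values in a well-ordered set and strictly decreases, after finitely many steps we obtain a CAT(0) cube complex with no cube of dimension $>1$, i.e.\ a tree $T$ on which $G$ acts cocompactly and without inversions. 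Each edge-midpoint of $T$ is a hyperplane of $T$, hence a component of an iterated intersection with hyperplanes of $\Psi$, so its $G$--stabilizer embeds into a hyperplane stabilizer of $\Psi$ and is finite; thus every edge stabilizer of the $G$--action on $T$ is finite. Moreover every $0$--cube survives each panel collapse, so $T^{(0)}\cong\Psi^{(0)}$ as $G$--sets and $G$ fixes no vertex of $T$. Passing to the minimal $G$--invariant subtree and a reduced graph-of-groups decomposition, Bass--Serre theory yields a nontrivial splitting of $G$ as a finite graph of groups with finite edge groups, which is Theorem~\ref{thm:Stallings}.

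The step I expect to require the most care is the compactness of hyperplanes and its persistence under panel collapse (the content of the second paragraph above): the inputs are the finiteness of the wall-stabilizer, which comes from $|\delta A|<\infty$ together with freeness of the $G$--action on the Cayley graph, and the elementary fact that a finite group acting cocompactly on a complex has that complex compact. Everything else amounts to assembling the standard properties of Sageev's dual cube complex and invoking Corollary~\ref{cor:lower_complexity} as a black box.
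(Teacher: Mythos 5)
Your proof takes essentially the same route as the paper: cubulate via Sageev's construction with compact hyperplanes, observe hyperplane stabilizers are finite, iterate Corollary~\ref{cor:lower_complexity} down to a tree, and note that edge stabilizers remain finite and that the absence of a global fixed point persists. The only cosmetic differences are that you phrase the set-up in terms of an almost-invariant set with finite coboundary (rather than a ball whose complement has two deep components), you derive finiteness of the wall stabilizer from its faithful action on $\delta A$, and you make explicit that $0$--cubes survive panel collapse; none of this changes the substance.
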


\begin{proof}
Fix a locally finite Cayley graph $X$ of $G$.  Since $G$ has more than one end, 
there is a ball $K$ so that $X-K$ has at least two components containing points 
arbitrarily far from $K$.  Applying~\cite{sageev1995ends}, we obtain an action 
of $G$ on a CAT(0) cube complex $\Psi_0$ whose hyperplanes correspond to $K$ 
and its translates; moreover, $G$ has no global fixed point in $\Psi_0$.  For 
any $r\geq0$, there exists $d=d(r)$ so that if $g_1,\ldots,g_k$ are such that 
$\dist_X(g_iK,g_jK)\leq r$ for all $i,j$, then there exists $y\in X$ so that 
each $g_iK$ intersects $B_d(y)$.  It follows that $\Psi_0$ has finitely many 
$G$--orbits of cubes, i.e. $G$ acts cocompactly (and with a single orbit of 
hyperplanes) on $\Psi_0$.   Moreover, the hyperplane-stabilisers are 
commensurable with the 
conjugates of the stabiliser of $K$, so they are finite.  Hence the hyperplanes 
of $\Psi_0$ are compact.   By passing to the first cubical subdivision, we can 
assume that $G$ acts without inversions.  (Up to here, our argument does not 
essentially differ from that in~\cite{niblo2004geometric}; the arguments 
diverge 
at the next step.)

Hence, by Corollary~\ref{cor:lower_complexity}, there is a sequence 
$\ldots\subsetneq\Psi_n\subsetneq\Psi_{n-1}\subsetneq\ldots\subsetneq\Psi_0$ of 
$G$--CAT(0) cube complexes, so that $\#(\Psi_n)<\#(\Psi_{n-1})$ for all 
$n\geq1$ for which $\Psi_n$ is not a tree.  Since $\#(\Psi_0)$ is finite, there 
exists $n$ so that 
$\#(\Psi_n)=\vec 0$, i.e. $\Psi_n$  is a tree.  Moreover, by 
Corollary~\ref{cor:lower_complexity}, the stabiliser of a hyperplane in 
$\Psi_n$ 
(i.e. an edge stabiliser) is virtually contained in the stabiliser of a 
hyperplane of 
$\Psi_0$, and is thus finite.  Since $\Psi_n$ is contained in $\Psi_0$, and $G$ 
did not fix a point in $\Psi_0$, there is no point in $\Psi_n$ fixed by $G$, 
i.e. the splitting is nontrivial. 
\end{proof}

\subsection{Antenna cubulations of free groups}\label{sec:antenna}
In~\cite{Wise:recube}, Wise proved the following remarkable
theorem. Let $F$ be a finite-rank free group, and let $H\leq F$ be an
arbitrary finitely-generated subgroup of infinite index.  Then $F$
acts freely and cocompactly on a CAT(0) cube complex $\Psi$, and
$\Psi$ contains a single $F$--orbit of hyperplanes, with each
hyperplane stabiliser conjugate to $H$.  This gives a profusion of
cubulations of $F$ beyond the obvious $1$--dimensional ones.  In fact,
the hyperplanes of $\Psi$ are necessarily non-compact when $H$ is
infinite, so $\Psi$ can be a bit hard to picture: on one hand, it is
quasi-isometric to a tree; on the other hand, the separating compact
sets one sees cannot be hyperplanes.  To illustrate
Theorem~\ref{thmi:panel_collapse}, we will explain how to perform
panel collapse on these exotic cubulations of $F$.

We say a CAT(0) cube complex $\Psi$ with a free, cocompact $F$--action is an 
\emph{antenna cubulation} of $F$ if it is of the type constructed 
in~\cite{Wise:recube} (which we discuss in more detail below).

\begin{prop}\label{prop:antenna}
  Let $\Psi$ be an antenna cubulation of $F$.  Then $\Psi$ has a
  hyperplane $D$ such that $\stabilizer_D(\Psi)$ does not act
  essentially on $D$, so $\Psi$ has an extremal panel $P$.
\end{prop}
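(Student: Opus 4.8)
The plan is to reduce to finding a bounded halfspace inside some hyperplane $D$, and then to extract one from the antennae. For the reduction: since $F$ acts freely and cocompactly on $\Psi$, the complex $\Psi$ is locally finite, and for every hyperplane $D$ the subgroup $H:=\stabilizer_D(\Psi)$ --- a conjugate of the given finitely generated, infinite-index subgroup of $F$ --- acts freely and cocompactly on the locally finite CAT(0) cube complex $D$. If $H$ is finite then $D$ is compact, and the conclusion is immediate (if $D$ is a point then $\Psi$ is a tree, and otherwise Corollary~\ref{cor:compact_hyp} supplies an extremal panel); so I assume $H$ is infinite, hence free of positive rank. Replacing $\Psi$ by its first cubical subdivision if necessary, we may assume $F$ acts without inversions; this changes neither $H$ nor whether $H$ acts essentially on $D$. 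By the second condition in the ``in particular'' part of Corollary~\ref{cor:lower_complexity}, it now suffices to produce one hyperplane $D$ on which $H$ does \emph{not} act essentially, and since the $H$--action on $D$ is cocompact this amounts to producing a shallow halfspace of $D$ --- for which it is enough to produce a \emph{bounded} one. The extremal panel $P$ then comes directly from that corollary.

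To produce such a halfspace I would unwind the antenna construction, as reviewed below following~\cite{Wise:recube}. Let $X$ be a rose with $\pi_1X=F$ and let $\hat X\to X$ be the cover with $\pi_1\hat X=H$. Since $H$ is finitely generated, $\hat X$ deformation retracts onto a finite core graph $C$, and $\hat X$ is built from $C$ by attaching, $H$--equivariantly and in finitely many $H$--orbits, the locally finite infinite trees --- the \emph{antennae} --- that record the directions of $\tilde X$ not carried by $C$. The complex $\Psi$ is dual to an $F$--invariant wallspace on $\tilde X$ modelled on this data, with a single $F$--orbit of walls whose stabilisers are the conjugates of $H$; in Sageev's picture the hyperplane $D$ stabilised by $H$ is the cube complex dual to the crossing pattern that the remaining walls cut out on the $H$--wall, and this pattern inherits an $H$--equivariant ``core plus antennae'' decomposition from $\hat X$.

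The crux is that the antennae of $\hat X$ \emph{do not propagate into $D$}: two distinct $F$--translates of the $H$--wall can cross only inside a bounded neighbourhood of the core $C$, because outside such a neighbourhood $\hat X$ is tree-like and the relevant separating sets are there either equal or nested. Hence the crossing pattern defining $D$ is trivial along each antenna past a uniform depth, so that, up to subdivision, $D$ is its ``core part'' together with an $H$--cocompact family of uniformly bounded \emph{truncated} antennae hanging off it, each along a single codimension--$1$ face. Fix one such truncated antenna $L$: a bounded, positive-dimensional subcomplex of $D$ meeting the rest of $D$ along a single codimension--$1$ face $W$. By Lemma~\ref{lem:compact_extremal}, $L$ has a hyperplane $K$ that is extremal in $L$, and a routine check --- using that $L$ is attached to the rest of $D$ only along $W$, and choosing the base $0$--cube in the proof of that lemma inside $W$ --- shows that $K$ is extremal in $D$ as well, with outer halfspace contained in the bounded set $L$; this is the desired shallow halfspace. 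The main obstacle is precisely the italicised structural claim about~\cite{Wise:recube}: one must verify that distinct $H$--wall translates interact only within a bounded neighbourhood of $C$, uniformly in the translate, and that each antenna therefore leaves behind a genuinely \emph{bounded} remnant in $D$ rather than, say, a half-infinite ray --- an unbounded hair would be compatible with an essential action and would break the argument. Establishing this ``bounded interaction'' from Wise's combinatorics is the real work.
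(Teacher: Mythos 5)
Your high-level plan --- reduce to producing a shallow (indeed bounded) halfspace in some hyperplane $D$, then extract one from the antenna structure --- is sound, and the reduction steps (subdivision, invoking the second ``in particular'' clause of Corollary~\ref{cor:lower_complexity}, cocompactness of the $H$--action on $D$) are all correct. But the heart of your argument, the italicised structural claim that distinct $F$--translates of the $H$--wall interact only within a uniformly bounded neighbourhood of the core, so that $D$ decomposes as a core with uniformly bounded truncated antennae hanging off single codimension--$1$ faces, is asserted and not proved. You acknowledge this yourself (``Establishing this `bounded interaction' from Wise's combinatorics is the real work''), and that is precisely where the proof is incomplete: without it you have no bounded halfspace, hence no extremal panel.

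The paper's proof does not prove or need your global structural claim. Instead it works directly with Wise's combinatorics to exhibit a \emph{single} extremalising hyperplane. Concretely, it uses the convention that the infinite trees attached to the core $\bar C$ are all labelled $+$, together with the condition that every reduced length--$2$ word appears as the label of some tip $T_i$ of the antenna, to produce $g\in F\smallsetminus H$ with $\widetilde W\cap g\widetilde W=A\cap gA=gT_i\subset P$. From this one reads off that of the four intersections $\widetilde W^{\pm}\cap g\widetilde W^{\pm}$, exactly one is unbounded (the one containing $\widetilde{\bar C}\cup g\widetilde{\bar C}$) and the other three are nonempty and bounded, so $D$ and $gD$ cross and each is extremal in the other; in particular $gD\cap D$ bounds a shallow halfspace of $D$, which is exactly what your reduction asks for. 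So the two routes are genuinely different --- you aim for a full structure theorem for $D$, the paper for a single witness $gD$ --- but your route is the more ambitious one, and it is missing the proof of its main input. To close the gap you would either have to carry out the ``bounded interaction'' analysis in $\widehat C$ (tracking how translates of $\widetilde W$ can enter an antenna and verifying they leave only bounded remnants, not half-infinite hairs), or abandon the global picture and follow the paper's tack of engineering one crossing translate from the length--$2$ words in the tips.
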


From Corollary~\ref{cor:lower_complexity}, we can then apply panel collapse.  The interesting thing about 
Proposition~\ref{prop:antenna} is that it provides an example of panel collapse 
when the hyperplanes are non-compact but the group is not too complicated.

\begin{proof}[Proof of Proposition~\ref{prop:antenna}]
First we recall the construction of antenna cubulations.  Then we apply 
Theorem~\ref{thmi:panel_collapse} and prove the two statements.

\textbf{Antenna cubulations:}  Let $C$ be a wedge of finitely many oriented 
circles, labelled by the generators of $F$, so that $F$ is identified with 
$\pi_1C$.  Let $\widehat C\to C$ be the based cover corresponding to 
$H\hookrightarrow F$, and let $\bar C$ be the core of $\widehat C$, which is 
compact since $C$ is finitely generated.  Since $[F:H]=\infty$, there is a 
vertex $v$ of $\bar C$ at which the immersion $\bar C\to C$ is not locally 
surjective.

An \emph{antenna} is a tree $A=P\cup\bigcup_iT_i$, where $P\cong [0,n]$ is a 
tree with $n$ edges and exactly two leaves, $T_i$ is a path of length $2$ for 
$1\leq i\leq n$, and $A$ is formed by identifying the midpoint of each $T_i$ to 
the vertex $i$ of $P$.  We also orient the edges of $A$ and label them by the 
generators of $F$ in such a way as to obtain an immersion $A\to C$.  We also do 
this labelling so that:
\begin{itemize}
     \item the edge $[0,1]$ of $P$ is labelled in such a way that attaching $P$ 
to $\bar C$ by identifying $v$ with $0\in P$ yields an immersion 
$P\cup_{0=v}\bar C\to C$ given by the labels and orientations;
    \item every reduced word of length $2$ in the generators of $F$ appears as 
the labelling of some $T_i$ (thus $n$ is bounded below in terms of the rank of 
$F$).  Let $W= P\cup_{0=v}\bar C.$
\end{itemize}

Now add (infinite) trees to $W$ where necessary to obtain an infinite-sheeted 
cover $\ddot C\to C$ homotopy equivalent to $W$.  For each of these finitely 
many trees, assign it a $+$ or $-$, and declare $W$ itself to be $-$.  This 
induces a partition of the universal cover $\widetilde C$ into two halfspaces (points mapping into 
$+$ and points mapping into $-$), and with some care this can be done so that 
$H$ is the stabiliser of this wall.  At this point, Wise 
applies Sageev's construction and, because of the above properties of antennas, 
is able to show that any axis in $\widetilde C$ for an element of $F$ is cut by 
some translate of $\widetilde W$.  Hence the action on the dual cube complex 
$\Psi$ is proper, and it is cocompact for general reasons~\cite{Sageev97}.  It 
is also not hard to see that the action of $F$ on $\Psi$ is essential.

Let $D$ be a hyperplane of $\Psi$, which, by construction, has stabiliser $H$.  Since $H$ stabilised the two halfspaces associated to the wall $\widetilde W$, the action of $F$ on $\Psi$ is without inversions in hyperplanes (there is no need to subdivide).

\textbf{Finding an extremal panel in $\Psi$:}  We can and shall insist that the 
$+$ and $-$ were assigned so that each tree attached to $\bar C$ (as opposed to 
the antenna), if any, is assigned $+$.   

We will prove that $H$ does not 
act essentially on $D$; the proof of Corollary~\ref{cor:lower_complexity} will 
then show that $\Psi$ has an extremal panel and panel collapse is in play.

Let $\widetilde W\subset\widetilde C$ be a lift of the universal cover of $W$, 
so that $\widetilde W$ consists of $\widetilde{\bar C}$ together with an 
antenna $hA$ based at $h\tilde v$, for each $h\in H$, where $\tilde v$ is a 
lift of $v$ in $\widetilde{\bar C}$ and $A$ is the lift of the antenna at that 
point.  

The construction of $A$ ensures that there exists $g\in F-H$ so that the distinct walls $\widetilde W,g\widetilde W$ intersect in the following way:
\begin{itemize}
 \item $\widetilde W\cap g\widetilde W= A\cap gA$;
 \item $gA\cap A=gT_i$ for some $i$, and $gT_i\subset P$.
\end{itemize}

Let $\widetilde W^+,\widetilde W^-$ denote the subtrees of $\widetilde C$ projecting to the subgraphs of $\ddot C$ labelled 
$+$ and $-$ respectively.  Then (up to relabelling), $\widetilde W^+\cap g\widetilde W^+$ contains $\widetilde{\bar C}$ and 
$g\widetilde{\bar C}$, while $\widetilde W^+\cap g\widetilde W^-$,  $\widetilde W^-\cap g\widetilde W^-$, $\widetilde W^-\cap 
g\widetilde W^+$ are nonempty and bounded.  

Hence the walls in $\widetilde C$ determined by $\widetilde W,g\widetilde W$ \emph{cross} (i.e. all four possible intersections of associated halfspaces are nonempty), so, in the dual cube complex $\Psi$, the hyperplanes $D,gD$ corresponding to these walls also cross.  On the other hand, each of $D,gD$ is extremal in the other.  Indeed, if $gD\cap D$ were an essential hyperplane of $D$ and vice versa, then all four of the possible intersections of the halfspaces $\widetilde W^\pm,g\widetilde W^\pm$ would be unbounded.  Hence $H$ does not act essentially on $D$ and, as explained in the proof of Corollary~\ref{cor:lower_complexity}, $\Psi$ has an extremal panel $P$.
\end{proof}

In fact Wise's antenna construction allows us to perform a finite
sequence of panel collapses that reduces the original cubulation to a
free action on a tree. In contrast to Proposition~\ref{prop:antenna},
using multiple hyperplanes, it seems one can construct geometric
actions of $F$ on a CAT(0) cube complex $\Psi$ so that $\Psi$ has no
compact hyperplanes and every hyperplane stabiliser acts essentially
on its hyperplane.

\subsection{The Cashen-Macura complexes}\label{sec:cashen_macura}

\begin{defn}\label{defn:line-pattern}
  Let $X$ be a Gromov hyperbolic space. A \emph{line pattern}
  $\mathcal L$ in $X$ is a collection of quasi-isometry classes of
  bi-infinite quasi-geodesics in $X$. Given spaces $X_i,\mathcal L_i$,
  equipped with line patterns, for $i=1,2$ we say that a
  quasi-isometry $\phi:X_1\to X_2$ \emph{respects the line patterns}
  if for any representative $l\in \mathcal L_1$ the mapping
  $l \mapsto \phi(l)$ is to a representative of an element of
  $\mathcal L_2$ and induces a bijection
  $\mathcal L_1 \stackrel{\sim}{\to} \mathcal L_2$.
\end{defn}

Two elements $h,g$ of a hyperbolic group $\Gamma$ are
\emph{commensurable} if there is some $k \in \Gamma$ such that
$[\langle g \rangle: \langle k^{-1}h k\rangle]\leq \infty$.

\begin{defn}
  If $\Gamma$ is a hyperbolic group and $\{g_1,\ldots,g_m\}$ is a
  tuple of non-commensurable hyperbolic elements, then the \emph{line
    pattern $\mathcal L$ generated by $\{g_1,\ldots,g_m\}$} is the
  collection of quasi-isometry classes of quasi-lines
  \[ \mathcal L = \{ h\cdot \langle g_i\rangle \mid h \in \Gamma,
    1\leq i \leq m \}/\textrm{(bounded distance)}.
  \] A line pattern is called \emph{rigid} if $\Gamma$ admits no
  virtually cyclic splittings relative to $\{g_1,\ldots,g_m\}$;
  equivalently, if for every cut pair ${c,c'} \subset \partial \Gamma$
  there is some $l \in \mathcal L$ with endpoints
  $l^+,l^- \in \partial \Gamma$ such that $l^+$ is in one connected
  component of $\partial \Gamma \setminus \{c,c'\}$ and $l^-$ is in
  a different component, or finally the virtually cyclic JSJ
  decomposition relative to $\{g_1,\ldots,g_m\}$ is trivial.
\end{defn}

In \cite{cashen-macura} the following remarkable \emph{pattern
  rigidity} theorem, which promotes certain quasi isometries to action
by isometries on a common space, is proved.
\begin{thm}[Pattern rigidity, see {\cite[Theorem
    5.5]{cashen-macura}}]\label{thm:cashen-macura}
  Let $F_i,\mathcal L_i,i=0,1$, be free groups equipped with a rigid
  line patterns. Then there are CAT(0) cube complexes $X_i$ with line
  patterns $\mathcal L_i$ and embeddings
  \begin{equation}\label{eqn:embedding}
    F_i\stackrel{\iota_i}{\hookrightarrow} \mathrm{Isom}(X_i)
  \end{equation}
  inducing cocompact isometric actions $F_i\circlearrowright X_i$,
  which in turn induce equivariant line pattern preserving
  quasi-isometries \[ \phi_i:F_i \to X_i.\] Furthermore for any line
  pattern preserving quasi-isometry $q:F_0\to F_1$ there is a line
  pattern preserving isometry $\alpha_q$ such that the following
  diagram of line pattern preserving quasi-isometries commutes up to
  bounded distance:\[
    \begin{tikzpicture}[scale=1.5]
      \node (XL1) at (-1,0) {($X_0,\mathcal L_0)$};
      \node (XL2) at (1,0) {($X_1,\mathcal L_1)$};
      \node (F1L) at (-1,-1) {($F_0,\mathcal L_0)$};
      \node (F2L) at (1,-1) {($F_1,\mathcal L_1)$};
      \draw[->] (F1L) --node[left]{$\phi_0$} (XL1);
      \draw[->] (F2L) --node[right]{$\phi_1$} (XL2);
      \draw[->] (F1L) --node[above]{$q$} (F2L);
      \draw[->] (XL1) --node[above]{$\alpha_q$} (XL2);
    \end{tikzpicture}.
  \]
  We note that the actions of $F_i$ on $X_i$ are free since the
  quasi-isometries $\phi_i:F_i \to X_i$ are equivariant.
\end{thm}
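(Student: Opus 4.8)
The statement is exactly \cite[Theorem 5.5]{cashen-macura}, so the plan is to cite it; but since the cube complexes $X_i$ it produces are precisely the input we will feed to panel collapse in the next result, it is worth recording why it holds and where the hypotheses enter.

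The approach I would take mirrors Cashen--Macura's. First I would attach to each pair $(F_i,\mathcal L_i)$ the \emph{decomposition space} of $\partial F_i$ obtained by identifying, for each line of $\mathcal L_i$, its two endpoints; this space is a quasi-isometry invariant of $(F_i,\mathcal L_i)$, since a pattern-preserving quasi-isometry $q\colon F_0\to F_1$ extends to a boundary homeomorphism carrying $\mathcal L_0$-endpoint-pairs to $\mathcal L_1$-endpoint-pairs and hence descends to a homeomorphism of decomposition spaces. Next I would use rigidity of the line pattern --- triviality of the relative virtually cyclic JSJ --- to extract from the decomposition space a canonical, $F_i$-invariant family of cut pairs, organise it into a wallspace, and run Sageev's dual-cube-complex construction to obtain $X_i$: a finite-dimensional CAT(0) cube complex with a single $F_i$-orbit of hyperplanes, coarsely the separating cut sets. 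Properness of the action follows because these walls coarsely separate points of $F_i$, and cocompactness holds for the usual reasons (finitely many orbits of cut pairs up to bounded distance); the orbit map is then the desired $F_i$-equivariant, pattern-preserving quasi-isometry $\phi_i\colon F_i\to X_i$, which is free since $F_i$ acts freely on itself.

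For the functoriality statement I would observe that, since $X_i$ is manufactured entirely from the decomposition space of $\partial F_i$, the homeomorphism of decomposition spaces induced by a pattern-preserving $q$ carries the canonical wall family of $F_0$ to that of $F_1$, and therefore induces an isometry $\alpha_q\colon X_0\to X_1$ of the dual cube complexes. The square commutes up to bounded distance because $\alpha_q\circ\phi_0$ and $\phi_1\circ q$ are both coarsely $F_0$-equivariant maps $F_0\to X_1$ inducing the same homeomorphism on boundaries, hence are at bounded distance.

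The step I expect to be the main obstacle is the one that genuinely belongs to \cite{cashen-macura}: showing that the cut-pair/wall family really is canonical and finite modulo $F_i$. This is exactly where rigidity is unavoidable --- in its absence the decomposition space can contain QH-type pieces, the wall family is not canonical, and no equivariant isometry $\alpha_q$ need exist. Once the theorem is available, the work remaining for us (carried out in Theorem~\ref{prop:cashen-macura-tree}) is to iterate panel collapse, Corollary~\ref{cor:lower_complexity}, on each $X_i$ down to a tree $T_i$, checking along the way that the line pattern and the maps $\phi_i,\alpha_q$ persist.
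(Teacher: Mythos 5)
The paper offers no proof of this statement: it is Theorem 5.5 of \cite{cashen-macura}, quoted verbatim (with $X_i$ renamed) and used as a black box, so your plan to cite it matches exactly. Your accompanying sketch of the Cashen--Macura argument is broadly faithful, but one detail is off: the walls are produced from \emph{minimal finite cut sets} in the decomposition space, not from cut pairs --- indeed, rigidity of the line pattern precisely guarantees that the decomposition space has no cut points and no cut pairs, which is what makes the minimal cut sets (necessarily of size $\geq 3$) canonical and finite modulo $F_i$. This is an imprecision in your exposition of the cited proof rather than a gap in the use of the theorem, since the paper takes the result for granted; the new mathematical content in this section is Theorem~\ref{prop:cashen-macura-tree}, where panel collapse is applied to the $X_i$ to produce trees.
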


In \cite[\S 6.4]{cashen-macura} the authors give an example where the
\emph{Cashen-Macura complex} $X$ given in Theorem
\ref{thm:cashen-macura} is not a tree, but observe that it is possible
in this example to construct another complex $X'$ satisfying the
requirement of the theorem that is a tree. They ask if this can always
be done. While not precisely answering their question, which is about a
choice of topologically distinguished cut sets, we have the following.

\begin{thm}\label{prop:cashen-macura-tree}
  Given a free group equipped with a line pattern $F_0,\mathcal L_0$,
  there exists a locally finite tree equipped with line pattern
  $T,\mathcal L_T$ which satisfies the requirements of the
  Cashen-Macura complex $X,\mathcal L$ given in Theorem
  \ref{thm:cashen-macura}. Furthermore $\mathcal L_T$ can be
  represented by geodesics.
\end{thm}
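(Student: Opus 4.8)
The plan is to obtain $T$ by applying iterated panel collapse to the Cashen--Macura complex, and then to transport both the line pattern and the naturality statement along the resulting quasi-isometry.

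Let $X_0:=X$ be the (finite-dimensional) Cashen--Macura complex from Theorem~\ref{thm:cashen-macura}, on which $F_0$ acts freely and cocompactly; after one cubical subdivision we may assume the action is without inversions, and $X_0$ carries the line pattern $\mathcal L_0$, realised by the $F_0$--translates of the axes of the defining hyperbolic elements. The structural input I would use is that the hyperplanes of $X_0$ are compact, being dual in the construction of~\cite{cashen-macura} to finite cut sets. Now if a finite-dimensional CAT(0) cube complex $\Psi$ with compact hyperplanes, carrying a cocompact action without inversions, is not a tree, then it contains a cube of dimension $\geq 2$, hence a compact hyperplane with more than one $0$--cube, so Corollary~\ref{cor:compact_hyp} gives an extremal panel and Corollary~\ref{cor:lower_complexity} yields $\Psi_\bullet\subset\Psi$ of strictly smaller complexity, again a CAT(0) cube complex with a cocompact action without inversions, on which each $g$ is hyperbolic [resp.\ elliptic] iff it was on $\Psi$, and whose hyperplanes---closed subspaces of hyperplanes of $\Psi$---remain compact. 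As in the proof of Corollary~\ref{cor:stallings}, iterating (complexity strictly decreasing) reaches $X_0\supsetneq X_1\supsetneq\dots\supsetneq X_n=:T$ with $T$ a tree; moreover $T$ is locally finite and $F_0\circlearrowright T$ is free and cocompact by Corollary~\ref{cor:lower_complexity}.

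For the line pattern, the inclusion $T\hookrightarrow X_0$ is $F_0$--equivariant and the identity on $0$--skeleta, hence an $F_0$--equivariant quasi-isometry by the Milnor--\v Svarc lemma applied to the two cocompact actions of the finitely generated group $F_0$. Let $\psi_0:X_0\to T$ be a coarse inverse; it is coarsely $F_0$--equivariant and pattern-preserving, so $\mathcal L_0$ transports to a line pattern $\mathcal L_T$ on $T$, which concretely consists of the $T$--axes of those conjugates of the defining hyperbolic elements whose $X_0$--axes represent $\mathcal L_0$. These elements are still hyperbolic on $T$ by Corollary~\ref{cor:lower_complexity}, and a hyperbolic isometry of a tree has a bi-infinite geodesic axis, so $\mathcal L_T$ is represented by geodesics, as required; and $\phi_T:=\psi_0\circ\phi_0:F_0\to T$ is the desired $F_0$--equivariant, pattern-preserving quasi-isometry.

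Finally, naturality: given a pattern-preserving quasi-isometry $q:F_0\to F_1$, Theorem~\ref{thm:cashen-macura} supplies a pattern-preserving isometry $\alpha_q:X_0\to X_1$ coarsely commuting with $\phi_0,\phi_1$, and it suffices to arrange $\alpha_q(T)=T'$, for then $\alpha'_q:=\alpha_q|_T:T\to T'$ is a pattern-preserving isometry and a diagram chase (using $\psi_1\circ\alpha_q\simeq\alpha'_q\circ\psi_0$) makes the square with $\phi_T,\phi_{T'}$ commute up to bounded distance. To this end I would run the collapses using, at each stage, a single orbit of extremal panels under the full isometry group $\Isom(X_0)$ (cocompact, possibly non-discrete, containing $F_0$): such an orbit has the no facing panels property by Lemma~\ref{lem:block_intersection_condition}, and by Lemma~\ref{lem:canonical} the resulting subspace is $\Isom(X_0)$--invariant, so the whole sequence is carried out $\Isom(X_0)$--equivariantly and $T\subset X_0$ is $\Isom(X_0)$--invariant; since $\alpha_q$ conjugates $\Isom(X_0)$ onto $\Isom(X_1)$ and rigidity of the line pattern pins down the choices, these correspond under $\alpha_q$ and $\alpha_q(T)=T'$. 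The main obstacle is exactly this last step---making the sequence of collapses \emph{canonical}, i.e.\ determined by the isometry type of $(X_0,\mathcal L_0)$ alone, so that the Cashen--Macura isometries descend to the trees; everything else follows routinely from Theorem~\ref{thm:pasting}, Corollary~\ref{cor:lower_complexity}, and Milnor--\v Svarc.
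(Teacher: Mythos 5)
Your proof is correct and follows essentially the same route as the paper's: invoke compactness of hyperplanes (which the paper draws from the local finiteness and uniform crossing-number bound established in \cite[\S 5.2.2]{cashen-macura}), collapse $X$ to a tree by iterating Corollary~\ref{cor:lower_complexity} with $G=\Isom(X)$, and take geodesic representatives for the transported line pattern $\mathcal L_T$. The canonicity worry you flag at the end is precisely the point the paper also leaves implicit; it is dispatched by observing that $\alpha_q$ is an isometry identifying $(X_0,\mathcal L_0)$ with $(X_1,\mathcal L_1)$ and hence conjugating $\Isom(X_0)$ onto $\Isom(X_1)$, so the chosen orbits of extremal panels can simply be transported along $\alpha_q$, making the two $\Isom$-equivariant collapse sequences correspond.
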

\begin{proof}
  In \cite[\S 5.2.2]{cashen-macura} it is shown that in the CAT(0)
  cube complex $X$ constructed in Theorem \ref{thm:cashen-macura}
  there is a uniform bound on the number of hyperplanes a given
  hyperplane $H$ crosses and $X$ is shown to be locally finite. It
  follows that $X$ has compact hyperplanes. Let
  $G = \mathrm{Isom}(X)$. By repeatedly applying Corollary
  \ref{cor:lower_complexity} there a $G$-equivariant deformation
  retraction from $X$ to a tree $T$. We can take $\mathcal L_T$ to be
  the collection of geodesic representatives of the quasi-lines of
  $\mathcal L$ in $T$. $(T,\mathcal L_T)$, by definition, satisfy the
  necessary requirements.
\end{proof}

Ultimately, given a line pattern preserving quasi-isometry
$q:(F_0,\mathcal L_0)\to(F_1,\mathcal L_1)$, we would like to
construct a virtual isomorphism between $F_0$ and $F_1$ that induces a
line pattern preserving quasi-isometry. The appropriate machinery to
obtain this result appears to be commensurability of tree lattices:
let $T$ be a tree equipped with a line pattern $\mathcal L$ and denote
$G = \mathrm{Isom}(T)$ and $G_{\mathcal L}$ the subgroup of isometries
of $T$ that preserve $\mathcal L$. Then the existence of such a
virtual isomorphism reduces to asking whether the images of $F_0$ and
$F_1$ in $G$ are commensurable within the subgroup $G_{\mathcal L}$ of
line pattern preserving isometries.

There are examples when $G_{\mathcal L}$ is a discrete group (see
\cite[\S 6.2]{cashen-macura}) and therefore finitely generated. In
this case the embeddings given in (\ref{eqn:embedding}) of Theorem
\ref{thm:cashen-macura} would give virtual isomorphisms,
i.e. isomorphisms between finite index subgroups, between $F_0$ and
$F_1$ that preserve line patterns. However in \cite[\S
6.3]{cashen-macura} an example shows that even if $\mathcal L$ is a
rigid line pattern, $G_{\mathcal L}$ may not be a discrete group of
automorphisms of $T$. In this case, even if $F_0$ and $F_1$ act
cocompactly on $T$, their intersections in $G_{\mathcal L}$ could be
trivial.

If $H$ is a closed subgroup of $\mathrm{Aut(T)}$, then a discrete
subgroup $\Gamma \subset H$ with the same orbits as $H$, i.e. with
$H\backslash T = \Gamma\backslash T$ is called a \emph{discrete
  grouping.} Discrete groupings are hard to obtain, which is why being
able to make Cashen-Macura complexes into trees is advantageous: we
are now able to apply the tree lattice techniques in
\cite{bass1990uniform}. We finish with an observation that may be
useful, in constructing virtual isomorphisms.

\begin{prop}[$G_{\mathcal L}$ is closed and admits a discrete
  grouping.]\label{prop:discrete-grouping-I}
  Let $F_0,\mathcal L_0$ be a free group equipped with a rigid line
  pattern.  Then the group $G_{\mathcal L}$ of automorphisms which
  preserve the line pattern $\mathcal L$ of the Cashen-Macura tree
  $T,\mathcal L$, given in Proposition \ref{prop:cashen-macura-tree},
  is a closed subgroup of $\mathrm{Aut}(T)$ and admits a finitely
  generated, discrete grouping $\Phi^0 \leq G_\mathcal{L}$.
\end{prop}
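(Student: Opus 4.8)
The plan is to treat the two assertions separately. Throughout, $\mathrm{Aut}(T)$ carries the topology of pointwise convergence on the (discrete) vertex set of $T$; since $T$ is locally finite this makes $\mathrm{Aut}(T)$ a locally compact group acting properly on $T$ with compact open vertex stabilisers. The first step is a local-finiteness observation about $\mathcal L$: only finitely many lines of $\mathcal L$ (that is, of the set $\mathcal L_T$ of geodesic representatives) pass through any given edge of $T$. Indeed, the equivariant, pattern-preserving quasi-isometry $\phi_0\colon F_0\to T$ shows that $\mathcal L_T$ is $F_0$--invariant and consists of finitely many $F_0$--orbits (the number is bounded by the number of generators of $\mathcal L_0$); each element of $\mathcal L_T$ is the axis of an infinite-order element of $F_0$ (the action $F_0\curvearrowright T$ is free), and its stabiliser in $F_0$ is an infinite cyclic group acting on it cocompactly. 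Together with cocompactness of $F_0\curvearrowright T$ this gives the claim.

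\textbf{$G_{\mathcal L}$ is closed.} In a tree, two bi-infinite geodesics at finite Hausdorff distance are equal, so $g\in\mathrm{Aut}(T)$ preserves the line pattern if and only if it permutes the set $\mathcal L_T$; hence $G_{\mathcal L}=\{g\in\mathrm{Aut}(T)\mid g\mathcal L_T\subseteq\mathcal L_T\text{ and }g^{-1}\mathcal L_T\subseteq\mathcal L_T\}$. To show this set is closed, suppose $g_n\to g$ with $g_n\in G_{\mathcal L}$; then $g_n^{-1}\to g^{-1}$ as well, so it is enough to prove that if $h_n\to h$ and $h_n\mathcal L_T\subseteq\mathcal L_T$ for all $n$, then $h\mathcal L_T\subseteq\mathcal L_T$. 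Fix $l\in\mathcal L_T$ and an edge $e\subseteq l$. For all large $n$ we have $h_n(e)=h(e)$, and $h_n(l)\in\mathcal L_T$ passes through $h(e)$; by the local-finiteness observation only finitely many lines of $\mathcal L_T$ pass through $h(e)$, so after passing to a subsequence we may assume $h_n(l)=l^\ast$ for a fixed $l^\ast\in\mathcal L_T$. Given any edge $e'\subseteq l$, for $n$ large in this subsequence we have $h(e')=h_n(e')\in h_n(l)=l^\ast$; hence every edge of the geodesic $h(l)$ lies on $l^\ast$, so $h(l)=l^\ast\in\mathcal L_T$. Thus $h\mathcal L_T\subseteq\mathcal L_T$, proving $G_{\mathcal L}$ closed. (The induced self-map of $\mathcal L_T$ is a bijection: it is injective because $h$ is injective on $T$, and surjective by applying the above to $h^{-1}$.)

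\textbf{From closedness to a discrete grouping.} Since $F_0$ acts freely on the locally finite tree $T$ it is a discrete subgroup of $\mathrm{Aut}(T)$, hence a discrete subgroup of the closed subgroup $G_{\mathcal L}$; as $F_0$ acts cocompactly on $T$ while $G_{\mathcal L}$ (being closed) acts properly and (containing $F_0$) cocompactly, $F_0$ is a uniform lattice in $G_{\mathcal L}$. A locally compact group that contains a lattice is unimodular, so $G_{\mathcal L}$ is unimodular. After barycentrically subdividing $T$ if necessary so that $G_{\mathcal L}$ acts without inversions---which changes neither $G_{\mathcal L}$ nor any of the above---we have a closed, cocompact, unimodular subgroup of the automorphism group of a locally finite tree, and the existence theorem for uniform tree lattices (\cite{bass1990uniform}) provides a discrete subgroup $\Phi^0\leq G_{\mathcal L}$ with $\Phi^0\backslash T=G_{\mathcal L}\backslash T$, i.e.\ a discrete grouping. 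Finally, $\Phi^0$ acts on the locally finite tree $T$ with finite quotient graph $G_{\mathcal L}\backslash T$ and, being discrete, with finite vertex stabilisers; so $\Phi^0$ is the fundamental group of a finite graph of finite groups and is therefore finitely generated (indeed virtually free).

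\textbf{Main obstacle.} The delicate step is the passage from ``$G_{\mathcal L}$ closed, cocompact, unimodular'' to the existence of a discrete grouping. Producing \emph{some} discrete cocompact subgroup is immediate ($F_0$ already is one), but its orbits are strictly coarser than those of $G_{\mathcal L}$, and enlarging it to a discrete subgroup realising exactly the $G_{\mathcal L}$--orbits is precisely what requires unimodularity and the graph-of-groups ``pruning'' technology of \cite{bass1990uniform}. The other point that must be handled with care is the local finiteness of $\mathcal L$: it is exactly this that prevents a limit of line-pattern-preserving automorphisms from creating ``new'' lines, and hence underpins the closedness argument.
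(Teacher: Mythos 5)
Your proof is correct and follows essentially the same strategy as the paper: establish that $G_{\mathcal L}$ is closed using a finiteness property of the line pattern on the tree, then combine $F_0$ being a uniform lattice in the closed group $G_{\mathcal L}$ with Bass's unimodularity and existence results for uniform tree lattices to produce the finitely generated discrete grouping $\Phi^0$. The only noteworthy variation is in the closedness step, where the paper constructs an explicit open neighbourhood separating a non-pattern-preserving $\gamma$ from $G_{\mathcal L}$ via the uniform bound on pairwise overlaps of lines, whereas you argue sequentially via a subsequence-stabilization trick based on local finiteness of $\mathcal L_T$ (finitely many lines through a given edge), which you derive cleanly from the free cocompact $F_0$--action; both hinge on the same underlying finiteness phenomenon, and both handle the $g\mapsto g^{-1}$ issue the same way.
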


Unfortunately we are currently unable to obtain a commensurability
result as we do not know if there is a discrepancy between
$G_{\mathcal L}$ and $G_{G_{\mathcal L}}$ (see \cite[Theorem 4.7
(iv)]{bass1990uniform} or \cite[Theorem 3]{lim_covering_2008}.)

\begin{proof}
  Given $F_0,\mathcal L_0$, let $T,\mathcal L$ be the Cashen-Macura
  tree constructed in Proposition \ref{prop:cashen-macura-tree}. Let
  $G = \mathrm{Aut}(T)$ and let $G_{\mathcal L}$ denote the subgroup
  that preserves the geodesics in $\mathcal L$.

  \textbf{$G_{\mathcal L}$ is a closed subgroup of $G$.} Equip $G$
  with the compact open topology. Let
  $\gamma \in G \setminus G_{\mathcal L}$. Suppose first that there is
  some line $l \in \mathcal L$ such that
  $\gamma \cdot l \not \in \mathcal L$.  $\gamma\cdot l$ is still a
  geodesic in $T$. Because there is a uniform bound on the
  intersection of any two distinct lines in $\mathcal L$, it must
  follow that there is some finite subset $l_0\subset l$ such that
  $\gamma\cdot l_0$ is not contained within any $l' \in \mathcal
  L$. Let $B\subset T$ be a metric ball containing $l_0$. Then the set
  $U$ of isometries that coincide with $\gamma$ on $B$ give an open
  neighbourhood
  $\gamma \in U \subset \left(G \setminus G_{\mathcal L}\right)$
  separating $\gamma$ from $G_{\mathcal L}$.

  Next suppose that for every $l \in \mathcal L$,
  $\gamma\cdot l \in \mathcal L$ but that
  $\gamma \cdot \mathcal L \subsetneq \mathcal L$. Let
  $l \in \mathcal L \setminus \gamma\cdot\mathcal L$, then
  $\gamma^{-1}\cdot l \not\in \mathcal L$. By the argument of the
  previous paragraph there is an open set
  $\gamma^{-1} \in U \subset \left(G \setminus G_{\mathcal L}\right)$
  separating $\gamma^{-1}$ from $G_{\mathcal L}$. Since $G$ is a
  topological group, the inversion operation is a homeomorphism
  $-^{-1}:G \to G$ that maps $G_{\mathcal L}$ to itself, since the
  latter is a subgroup. The image $U^{-1}$ of $U$ gives a open
  neighbourhood
  $\gamma \in U^{-1} \subset \left(G \setminus G_{\mathcal L}\right)$
  separating $\gamma$ from $G_{\mathcal L}$.  It follows that
  $G_{\mathcal L}$ is closed in $G$.

  \textbf{Applying tree lattice techniques.}  For any subgroup of $G$ there is a well defined homomorphism to
  $\mathbb Z_2$ whose kernel does not invert any edges (see \cite[\S
  3]{bass1990uniform} or \cite[\S 6.3]{bass}.) If necessary we
  therefore pass to index 2 subgroups that do not invert edges of $T$,
  but keep our notation.  By Theorem \ref{thm:cashen-macura} the group
  $F_0$ acts freely on $T$. By hypothesis the action is also
  cocompact. $F_0\backslash T$ is therefore a finite graph. It follows
  that $F_0 \subset G_{\mathcal L}$ is a \emph{uniform lattice} in the
  sense of \cite[Definition 4.3]{bass1990uniform}. Because
  $G_{\mathcal L}$ is closed, \cite[Proposition 4.5]{bass1990uniform}
  then implies that $G_{\mathcal L}$ is \emph{unimodular} and
  \cite[Theorem 4.7]{bass1990uniform} implies the existence of a
  discrete subgroup $\Phi^0 \subset G_{\mathcal L}$ with
  $\Phi^0\backslash T = G_{\mathcal L}\backslash T$.
  
  Since $G_{\mathcal L}$ contains $F_0$, and $F_0$ acts cocompactly on
  $T$, it follows that $G_{\mathcal L}$, and hence $\Phi^0$, acts
  cocompactly on $T$.  Since $\Phi^0$ is discrete, stabilisers in
  $\Phi^0$ of points in the locally finite tree $T$ are finite, so
  $\Phi^0$ acts on $T$ properly.  Hence $\Phi^0$ is finitely
  generated, by the Milnor-\v{S}varc lemma.
\end{proof}

We end this section with a question that we hope would be of interest to tree
lattice experts.

\begin{question}
  Let $\mathcal L$ be a rigid line pattern in a tree $T$ and suppose
  that $G_{\mathcal L} \leq \mathrm{Aut}(T)$ is closed and
  unimodular. Can there be a proper
  inclusion\[ G_{\mathcal L} \lneq G_{G_{\mathcal L}},
  \] where $G_{G_{\mathcal L}} \leq \mathrm{Aut}(T)$ is the maximal
  group with \[
    G_{\mathcal L}\backslash T = G_{G_{\mathcal L}}\backslash T?
    \]
\end{question}

In \cite{woodhouse_revisiting_2018}, Daniel Woodhouse solved the
problem that was the original motivation for this question.

\subsection{The Kropholler conjecture}\label{subsec:kropholler}
We now apply Corollary~\ref{cor:lower_complexity} to the following special case of the Kropholler conjecture.

\begin{cor}[Kropholler conjecture, cocompact case]\label{cor:kropholler}
Let $G$ be a finitely generated group and $H\leq G$ a subgroup with $e(G,H)\geq2$. Let $\Psi$ be the dual cube complex associated to the pair $(G,H)$, so that $\Psi$ has one $G$--orbit of hyperplanes and each hyperplane stabiliser is a conjugate of $H$.  Suppose that:
\begin{itemize}
 \item $G$ acts on $\Psi$ cocompactly;
 \item $H$ acts with a global fixed point on the associated hyperplane.
\end{itemize}
Then $G$ admits a nontrivial splitting over a subgroup commensurable with a subgroup of $H$.
\end{cor}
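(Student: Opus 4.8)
The plan is to run the argument of Corollary~\ref{cor:stallings}, with ``compact hyperplane'' weakened to ``bounded hyperplane''. After passing to the first cubical subdivision we may assume $G$ acts on $\Psi$ without inversions in hyperplanes; this affects none of the hypotheses in an essential way, and, since the $G$--action is cocompact, $\Psi$ is finite-dimensional. First I would check that \emph{every} hyperplane of $\Psi$ is bounded. As $\Psi$ has a single $G$--orbit of hyperplanes it suffices to bound the hyperplane $D_0$ with $\stabilizer_G(D_0)=H$: exactly as in the proof of Corollary~\ref{cor:lower_complexity}, cocompactness produces finitely many $H$--orbits of $1$--cubes dual to $D_0$, so $H$ acts coboundedly on $D_0$, and since $H$ fixes a point of $D_0$ this forces $\diam(D_0)<\infty$. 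Note also that $G$ has no global fixed point in $\Psi$: the complex $\Psi$ is unbounded because $e(G,H)\ge2$, while a global fixed point would, together with cocompactness, force $\Psi$ to be bounded.

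Next I iterate Corollary~\ref{cor:lower_complexity}. Suppose $\Psi$ is not a tree; then some hyperplane $C$ of $\Psi$ has more than one $0$--cube, hence contains a $1$--cube, hence a hyperplane of its own, and so a halfspace. Since $C$ has finite diameter, this halfspace lies in a bounded neighbourhood of its bounding hyperplane, so it is shallow and $\stabilizer_G(C)$ does not act essentially on $C$ (cf.\ Lemma~\ref{lem:compact_extremal}). Thus Corollary~\ref{cor:lower_complexity} applies and yields a $G$--cocompact CAT(0) cube complex $\Psi_1\subset\Psi$ on which $G$ acts without inversions, with $\#(\Psi_1)<\#(\Psi)$, and each of whose hyperplanes is a component of $K\cap\Psi_1$ for some hyperplane $K$ of $\Psi$. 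The one point needing care is that the hyperplanes of $\Psi_1$ are again bounded: such a hyperplane sits as a subspace inside some bounded hyperplane $K$ of $\Psi$, and because $\Psi$ is finite-dimensional the cube metric of $\Psi_1$ is quasi-isometric to the restriction of the metric of $\Psi$, with constants depending only on $\dimension\Psi$ (the ``diagonal'' cubes introduce only controlled distortion, and on a maximal cube $c$ the connected set $\mathcal F(c)$ has diameter bounded in terms of $\dimension\Psi$). Since $\dimension\Psi_1\le\dimension\Psi$, the same reasoning applies to $\Psi_1$, giving a strictly descending sequence $\Psi=\Psi_0\supsetneq\Psi_1\supsetneq\cdots$ of $G$--cocompact cube complexes with strictly decreasing complexity; as complexities are well-ordered, this terminates at some $\Psi_n$ with $\#(\Psi_n)=\vec0$, i.e.\ a tree $T=\Psi_n\subset\Psi$.

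It remains to read off the splitting. The $G$--action on $T$ is cocompact, and it has no global fixed point, since $T\subset\Psi$ is $G$--invariant and $G$ fixes no point of $\Psi$; hence by Bass--Serre theory $G$ splits nontrivially as a finite graph of groups whose edge groups are the stabilisers of hyperplanes of $T$. At each application of Corollary~\ref{cor:lower_complexity} a hyperplane $C$ of the smaller complex is cut out by finitely many hyperplanes of the larger one, and $\stabilizer_G(C)$ permutes that finite set (see the proof of Corollary~\ref{cor:lower_complexity}); so a finite-index subgroup of $\stabilizer_G(C)$ is contained in the stabiliser of one of those hyperplanes. Intersecting these finite-index subgroups down the tower, a finite-index subgroup of each hyperplane stabiliser of $\Psi_n$ is conjugate in $G$ into a hyperplane stabiliser of $\Psi$, hence into a conjugate of $H$. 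Thus every edge group of the splitting is commensurable with a subgroup of $H$, as required.

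The step I expect to be the main obstacle is the boundedness-preservation claim in the second paragraph: one must be certain that panel collapse cannot manufacture a hyperplane of infinite diameter, equivalently that the metric distortion caused by the diagonal cubes is uniformly controlled. This is exactly where finite-dimensionality of $\Psi$ (hence of the $G$--action, hence the cocompactness hypothesis) is used, and it is why cocompactness cannot simply be dropped here.
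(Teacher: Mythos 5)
Your proposal is correct and follows essentially the same route as the paper: show the hyperplane $D$ is bounded (cobounded action plus a fixed point), apply Corollary~\ref{cor:lower_complexity} iteratively until the complexity vanishes, and track how hyperplane stabilisers change at each step to get commensurability with a subgroup of $H$. You fill in two points the paper leaves implicit --- that boundedness of hyperplanes persists under a single panel collapse, and that the action on the terminal tree has no global fixed point --- but both are consistent with and implicit in the paper's argument, so this is the same proof.
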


\begin{proof}
Let $D$ be a hyperplane of $\Psi$ with $\stabilizer_G(D)=H$.  

\textbf{Bounded hyperplanes:}  The fixed-point hypothesis guarantees that the action of $H$ on $D$ is non-essential, so we could apply Corollary~\ref{cor:lower_complexity} immediately.  In fact, since $H$ acts cocompactly on $D$, and also fixes a point in $D$, we have that $D$ has finite diameter.

\textbf{Applying Corollary~\ref{cor:lower_complexity}:}  Since $D$ is bounded, Corollary~\ref{cor:compact_hyp} provides an extremal panel in $\Psi$.  At this point, we can subdivide $\Psi$ once if necessary to ensure that the action of $G$ is without inversions.  This has the effect of replacing $\stabilizer_G(D)$ with a subgroup of index at most $2$, which will not affect the conclusion.  (As usual, we will not subdivide at later stages in the induction, but instead use that the no inversions property of the action persists under panel collapse.)

Corollary~\ref{cor:lower_complexity} provides a cocompact, inversion-free $G$--action on a CAT(0) cube complex $\Psi_\bullet$ with $\#(\Psi_\bullet)<\#(\Psi)$, unless $\Psi$ was already a tree.  Moreover, the hyperplanes of $\Psi_\bullet$ are components of $\Psi_\bullet\cap gD$ for $g\in G$, and are thus bounded. Furthermore this intersection property implies that, since $G$ acts by permuting the hyperplanes in $\Psi$, a $\Psi_\bullet$--hyperplane stabilizer must lie in the stabilizer of an intersection of $\Psi$--hyperplanes. It follows that each stabiliser of a hyperplane in $\Psi_\bullet$ is virtually contained in a stabiliser of a hyperplane in $\Psi$ and the index is bounded by the dimension of $\Psi$. It thus follows by induction on complexity that $G$ acts on a simplicial tree all of whose edges has stabiliser commensurable with a subgroup of $H$, as required.
\end{proof}

\bibliographystyle{acm}
\bibliography{biblio.bib}

\end{document}